\def\e{{\bf e}}
\def\u{{\bf u}}
\renewcommand{\v}{{\bf v}}
\def\q{{\bf q}}
\def\w{{\bf w}}
\def\n{{\bf n}}
\def\f{{\bf f}}
\def\x{{\bf x}}
\def\z{{\bf z}}
\def\btau{\boldsymbol{\tau}}
\def\bbeta{\boldsymbol{\eta}}
\def\bs{\boldsymbol{\sigma}}
\def\bxi{\boldsymbol{\xi}}
\def\bzeta{\boldsymbol{\zeta}}
\def\bphi{\boldsymbol{\phi}}
\def\bchi{\boldsymbol{\chi}}
\def\U{{\bf U}}
\def\V{{\bf V}}
\def\D{{\bf D}}
\def\X{{\bf X}}
\def\A{{\bf A}}
\def\B{{\bf B}}
\def\C{{\bf C}}
\def\E{{\bf E}}
\def\H{{\bf H}}
\def\bI{{\bf I}}
\def\d{\partial}
\def\lam{\lambda}
\def\R{{\bf R}}
\def\grad{\nabla}
\def\div{\grad\cdot}
\def\O{\Omega}
\def\Gam{\Gamma}
\def\<{\langle}
\def\>{\rangle}
\def\CO2{{CO$_{2}$}}
\def\dt{d_\tau}
\def\domf{\Omega_f}
\def\domp{\Omega_p}
\newtheorem{lemma}{Lemma}[section]
\newtheorem{theorem}{Theorem}[section]
\newtheorem{corollary}{Corollary}[section]
\begin{document}

\title{A Lagrange multiplier method for a Stokes-Biot fluid-poroelastic structure interaction model}

\author{Ilona Ambartsumyan\thanks{Department of Mathematics, University of
		Pittsburgh, Pittsburgh, PA 15260, USA;~{\tt  ila6@pitt.edu, elk58@pitt.edu, 
                yotov@math.pitt.edu}; partially supported by DOE grant
                DE-FG02-04ER25618 and NSF grant DMS 1418947.}~\and
		Eldar Khattatov\footnotemark[1]\and
		Ivan Yotov\footnotemark[1]\and
		Paolo Zunino\footnotemark[2]\thanks{Department of Mechanical Engineering 
                \& Materials Science, Pittsburgh, PA 15261, USA;~{\tt paz13@pitt.edu}; 
                partially supported by DOE grant DE-FG02-04ER25618.}
                \footnotemark[3]\thanks{MOX, Department of Mathematics, Politecnico di Milano, 
                Italy.}
}

\date{\today}
\maketitle

\begin{abstract}
We study a finite element computational model for solving the coupled
problem arising in the interaction between a free fluid and a fluid in
a poroelastic medium. The free fluid is governed by the Stokes
equations, while the flow in the poroelastic medium is modeled using
the Biot poroelasticity system. Equilibrium and kinematic conditions
are imposed on the interface. A mixed Darcy formulation is employed,
resulting in continuity of flux condition of essential type. A
Lagrange multiplier method is employed to impose weakly this
condition. A stability and error analysis is performed for the
semi-discrete continuous-in-time and the fully discrete
formulations. A series of numerical experiments is presented to
confirm the theoretical convergence rates and to study the
applicability of the method to modeling physical phenomena and the
sensitivity of the model with respect to its parameters.
\end{abstract}

\section{Introduction}
In this paper we study the interaction of a free incompressible
viscous Newtonian fluid with a fluid within a poroelastic medium.
This is a challenging multiphysics problem with applications to
predicting and controlling processes arising in groundwater flow in
fractured aquifers, oil and gas extraction, arterial flows, and
industrial filters. In these applications, it is important to model
properly the interaction between the free fluid with the fluid within
the porous medium, and to take into account the effect of the
deformation of the medium. For example, geomechanical effects play an
important role in hydraulic fracturing, as well as in modeling
phenomena such as subsidence and compaction.

We adopt the Stokes equations to model the free fluid and the Biot
system \cite{Biot} for the fluid in the poroelastic media. In the
latter, the volumetric deformation of the elastic porous matrix is
complemented with the Darcy equation that describes the average
velocity of the fluid in the pores. The model features two
different kinds of coupling across the interface: Stokes-Darcy
coupling \cite{DMQ,Gir-Riv-NS-D,LSY,Mo-Xu,RivYot,VasWangYot,Xie-Xu-Xue} and
fluid-structure interaction (FSI)
\cite{Badia-Quaini-Quart,deparis2,nobile2008effective,quaini2007semi,Xu-Yang}.

The well-posedness of the mathematical model based on the Stokes-Biot
system for the coupling between a fluid and a poroelastic structure is
studied in \cite{Showalter-Biot-Stokes}. A numerical study of the
problem, using the Navier-Stokes equations for the fluid, is presented
in \cite{Badia-Quaini-Quart}, utilizing a variational multiscale
approach to stabilize the finite element spaces. The problem is solved
using both a monolithic and a partitioned approach, with the latter
requiring subiterations between the two problems.  The reader is also
referred to \cite{BYZ}, where a non-iterative operator-splitting method
for a coupled Navier-Stokes-Biot model is developed.

An alternative partitioned approach for the coupled Stokes-Biot
problem based on the Nitsche's method is developed in
\cite{Bukac-Nitsche}. The resulting method is loosely coupled and
non-iterative with conditional stability. Unlike the method in
\cite{BYZ}, which is suitable for the pressure formulation of Darcy
flow, the Nitsche's method can handle the mixed Darcy formulation.  It
does, however, suffer from a reduced convergence, due to the splitting
across the interface.  This is typical for Nitsche's splittings, see
e.g. \cite{burman2009stabilization} for modeling of FSI. Possible
approaches to alleviate this problem include iterative correction
\cite{Burman-explicit} and the use of the split method as a
preconditioner for the monolithic scheme \cite{Bukac-Nitsche}.

In applications to flow in fractured poroelastic media, an alternative
modeling approach is based on a reduced-dimension fracture model. We
mention recent work using the Reynolds lubrication equation
\cite{Ganis-2014,Girault-2015,Lee-phase-field,Mikelic-phase-field} as well as an averaged Brinkman
equation \cite{Bukac-reduced-fracture}. Earlier works that do not
account for elastic deformation of the media include averaged Darcy
models
\cite{Martin20051667,Frih20121043,Morales-2010,DAngelo2012465,Fumagalli-2012},
Forchheimer models \cite{Frih200891}, and Brinkman models
\cite{lesinigo2011multiscale}.

In this work we focus on the monolithic scheme for the
full-dimensional Stokes-Biot problem with the approximation of the
continuity of normal velocity condition through the use of a Lagrange
multiplier. We consider the mixed formulation for Darcy flow in the
Biot system, which provides a locally mass conservative flow
approximation and an accurate Darcy velocity. However, this
formulation results in the continuity of normal velocity condition
being of essential type, which requires weak enforcement through
either a penalty or a Lagrange multiplier formulation. Here we study
the latter, as an alternative to the previously developed Nitsche
formulation \cite{Bukac-Nitsche}. The advantage of the Lagrange
multiplier method is that it doesn't involve a penalty parameter and
it can enforce the the continuity of normal velocity with machine
precision accuracy on matching grids
\cite{ambartsumyan2014simulation}. The method is also 
convergent on non-matching grids.  After deriving a finite element
based numerical approximation scheme for the Stokes-Biot problem, we
provide a detailed theoretical analysis of stability and error
estimates. A critical component of the analysis is the construction of
a finite element interpolant in the space of velocities with weakly
continuous normal components. This interpolant is shown to have
optimal approximation properties, even for grids that do not match
across the interface.  The numerical tests confirm the theoretical
convergence rates and illustrate that the method is applicable for
simulating real world phenomena with a wide range of realistic
physical parameters.

An additional advantage of the Lagrange multiplier formulation is that
it is suitable for efficient parallel domain decomposition algorithms
for the solution of the coupled problem, via its reduction to an
interface problem, see, e.g. \cite{VasWangYot} for the Stokes-Darcy
problem. It can also lead to multiscale approximations through the use
of a coarse-scale Lagrange multiplier or mortar space
\cite{APWY,ganis2009implementation,GirVasYot}. However, this topic is
beyond the scope of the paper and it will be investigated in the
future.

The remainder of the manuscript is organized as follows. In Section 2
we present the mathematical model. Section 3 is devoted to the
semi-discrete continuous-in-time numerical scheme and the uniqueness
and existence of its solution, followed by its stability analysis in
Section 4. A detailed error analysis is presented in Section 5, which
gives insight on the expected convergence rates with different choice
of finite element spaces. Section 6 and the Appendix present the
analysis for the fully discrete scheme. Extensive numerical
experiments are discussed in Section 7, while Section 8 sums up our
findings.

\section{Stokes-Biot model problem}

We consider a multiphysics model problem for free fluid's interaction with a flow in a 
deformable porous media, where the simulation domain $\Omega \subset
\R^d$, $d = 2,3$, is a union of non-overlapping regions $\O_f$ and $\O_p$. 
Here $\O_f$ is a free fluid region
with flow governed by the Stokes equations and $\O_p$ is a poroelastic
material governed by the Biot system. For simplicity of notation, we assume 
that each region is connected. The extension to non-connected regions is 
straightforward. Let $\Gamma_{fp} = \d \O_f \cap
\d \O_p$.  Let $(\u_\star,p_\star)$ be the velocity-pressure pair in
$\O_\star$, $\star = f$, $p$, and let $\bbeta_p$ be the displacement
in $\O_p$.  Let $\mu > 0$ be the fluid viscosity,
let $\f_\star$ be the body force terms, and let
$q_\star$ be external source or sink terms.  Let $\D(\u_f)$ and
$\bs_f(\u_f,p_f)$ denote, respectively, the deformation rate tensor
and the stress tensor:
$$
\D(\u_f) = \frac 12 (\grad \u_f + \grad\u_f^T), \qquad
\bs_f(\u_f,p_f) = -p_f \bI + 2\mu \D(\u_f).
$$
In the free fluid region $\O_f$, $(\u_f,p_f)$ satisfy 
the Stokes equations
\begin{align}
	-\div \bs_f(\u_f,p_f)  = \f_f \quad \mbox{in } \O_f \times (0,T] \label{stokes1} \\
	 \div \u_f =  q_f  \quad \mbox{in } \O_f \times (0,T], \label{stokes2}
\end{align}
where $T > 0$ is the final time.
Let $\bs_e(\bbeta_p)$ and $\bs_p(\bbeta_p,p_p)$ be the elastic and poroelastic stress tensors, 
respectively:
\begin{equation}\label{stress-defn}
	\bs_e(\bbeta_p) = \lambda_p (\div \bbeta_p) \bI + 2\mu_p \D(\bbeta_p), \qquad
	\bs_p(\bbeta_p,p_p) = \bs_e(\bbeta_p) - \alpha p_p \bI,
\end{equation}
where $ 0 < \lambda_{min} \le \lambda_p(\x) \le \lambda_{max}$ and 
$0 < \mu_{min} \le \mu_p(\x) \le \mu_{max} $ are the Lam\'{e} parameters and 
$0 \le \alpha \le 1$ is the Biot-Willis constant. 
The poroelasticity region $\O_p$ is governed by the quasi-static 
Biot system \cite{Biot}
\begin{eqnarray}
	- \div \bs_p(\bbeta_p,p_p) = \f_p, \quad \mu K^{-1}\u_p + \grad p_p = 0, \, 
        &&\mbox{in } \O_p\times (0,T], \label{eq:biot1}\\ 
	\frac{\d}{\d t} \left(s_0 p_p + \alpha \div \bbeta_p\right) + \div \u_p = q_p 
	\quad &&\mbox{in } \O_p\times (0,T], \label{eq:biot2}
\end{eqnarray}
where $s_0 \ge 0$ is a storage coefficient and $K$ the symmetric and uniformly 
positive definite rock permeability tensor, satisfying, for some constants 
$0 < k_{min} \le k_{max}$,
$$
\forall \, \bxi \in \R^d, \,\,  
k_{min} \bxi^T\bxi \le \bxi^T K(\x) \bxi \le k_{max} \bxi^T\bxi, \,\, \forall \, 
\x \in \Omega_p.
$$

Following \cite{Showalter-Biot-Stokes,Badia-Quaini-Quart}, the 
{\it interface conditions} on the 
fluid-poroelasticity interface $\Gamma_{fp}$ are {\it mass conservation},
{\it balance of stresses}, and the Beavers-Joseph-Saffman (BJS) condition
\cite{BJ,Saffman} modeling {\it slip with friction}: 
\begin{align}
	\label{eq:mass-conservation}
	& \u_f\cdot\n_f + \left(\frac{\d \bbeta_p}{\d t} + \u_p\right)\cdot\n_p = 0 
&& \mbox{on } \Gamma_{fp}\times (0,T],
	\\
        \label{balance-stress}
	& - (\bs_f \n_f)\cdot\n_f =  p_p, \qquad 	\bs_f\n_f + \bs_p\n_p = 0
&& \mbox{on } \Gamma_{fp}\times (0,T],
	\\
	\label{Gamma-fp-1}
	& - (\bs_f\n_f)\cdot\btau_{f,j} = \mu\alpha_{BJS}\sqrt{K_j^{-1}}
	\left(\u_f - \frac{\d \bbeta_p}{\d t}\right)\cdot\btau_{f,j} 
&& \mbox{on } \Gamma_{fp}\times (0,T],
\end{align}
%
%
%
where $\n_f$ and $\n_p$ are the outward unit normal vectors to $\d
\O_f$, and $\d \O_p$, respectively, $\btau_{f,j}$, $1 \le j \le d-1$,
is an orthogonal system of unit tangent vectors on $\Gamma_{fp}$, $K_j
= (K \btau_{f,j}) \cdot \btau_{f,j}$, and $\alpha_{BJS} \ge 0$ is an
experimentally determined friction coefficient.  We note that the
continuity of flux constrains the normal velocity of the solid
skeleton, while the BJS condition accounts for its tangential
velocity.

The above system of equations needs to be complemented by a set of
boundary and initial conditions. Let $\Gamma_f = \partial \O_f \cap \d\Omega$
and $\Gamma_p = \partial \O_p \cap \d\Omega$. Let $\Gamma_p = \Gamma_p^D \cup \Gamma_p^N$.
We assume for simplicity homogeneous boundary conditions:
$$
\u_f = 0 \mbox{ on } \Gamma_f \times (0,T], 
\quad \bbeta_p = 0 \mbox{ on } \Gamma_p\times (0,T], \quad
p_p = 0 \mbox{ on } \Gamma_p^D\times (0,T], \,\,
\u_p \cdot \n_p = 0 \mbox{ on } \Gamma_p^N\times (0,T].
$$
To avoid the issue with restricting the mean value of the pressure, we assume that 
$|\Gamma_p^D| > 0$. We also assume that $\Gamma_p^D$ is not adjacent to the 
interface $\Gamma_{fp}$, i.e., $\mbox{dist}(\Gamma_p^D,\Gamma_{fp}) \ge s > 0$.
Non-homogeneous displacement and velocity conditions can be handled in 
a standard way by adding suitable extensions of the boundary data. The pressure boundary 
condition is natural in the mixed Darcy formulation, so non-homogeneous pressure
data would lead to an additional boundary term. We further set the initial conditions
$$
p_p(\mathbf{x},0) = p_{p,0}(\mathbf{x}), \,\, 
\bbeta_p(\mathbf{x},0) = \bbeta_{p,0}(\mathbf{x}) \mbox{ in } \O_p.
$$
The solvability of the Stokes-Biot system
\eqref{stokes1}--\eqref{Gamma-fp-1} was discussed in
\cite{Showalter-Biot-Stokes}, see also \cite{Showalter-Deg-Evo}. In
the following we derive a Lagrange multiplier type weak formulation of
the system, which will be the basis for our finite element
approximation.  Let $(\cdot,\cdot)_S$, $S \subset \R^d$, be the
$L^2(S)$ inner product and let $\<\cdot,\cdot\>_F$, $F \subset
\R^{d-1}$, be the $L^2(F)$ inner product or duality pairing. We will
use the standard notation for Sobolev spaces, see, e.g.
\cite{ciarlet1978finite}. Let
\begin{align}
\V_{f} &= \{ \v_f \in H^1(\O_{f})^d : \v_f = 0 \text{ on }\Gamma_f\}, && W_{f} = L^2(\O_{f}), \nonumber
\\
\V_{p} &= \{ \v_p \in H({\rm div}; \O_{p}) : \v_p \cdot \n_p = 0 \text{ on }
\Gamma_p^N\},&& W_{p} = L^2(\O_{p}), \nonumber
\\
\X_{p} &= \{ \bxi_p \in H^1(\O_{p})^d : \bxi_p= 0 \text{ on } \Gamma_p \}, \label{spaces}
\end{align}
where $H({\rm div}; \O_{p})$ is the space of $L^2(\O_p)^d$-vectors with
divergence in $L^2(\O_p)$ with a norm
$$
\|\v\|_{H({\rm div}; \O_{p})}^2 = \|\v\|_{L^2(\Omega_p)}^2 + \|\div\v\|_{L^2(\Omega_p)}^2.
$$
We define the global velocity and pressure spaces as 
$$
\V = \{\v = (\v_{f},\v_{p}) \in \V_{f} \times \V_{p}\}, \quad
W = \{w = (w_{f},w_{p}) \in W_{f} \times W_{p}\},
$$
with norms 
$$
\|\v\|_{\V}^2 = \|\v_{f}\|_{H^1(\Omega_f)}^2 + \|\v_{p}\|_{H({\rm div}; \O_{p})}^2,
\quad \|w\|_W^2 = \|w_{f}\|_{L^2(\Omega_f)}^2 + \|w_{p}\|_{L^2(\Omega_p)}^2.
$$
The weak formulation is obtained by multiplying the equations in each
region by suitable test functions, integrating by parts the second
order terms in space, and utilizing the interface and boundary
conditions. Let
\begin{align*}
a_{f}(\u_{f},\v_{f}) &= (2\mu\D(\u_{f}),\D(\v_{f}))_{\O_{f}},
\\
a^d_{p}(\u_{p},\v_{p}) &= (\mu K^{-1}\u_{p},\v_{p})_{\O_{p}},
\\
a^e_{p}(\bbeta_p,\bxi_p) &= (2\mu_p\D(\bbeta_p),\D(\bxi_p))_{\O_{p}}+(\lambda_p\nabla\cdot\bbeta_p,\nabla\cdot\bxi_p)_{\O_{p}}
\end{align*}
be the bilinear forms related to Stokes, Darcy and the elasticity
operators, respectively.  Let 
$$
b_{\star}(\v,w) = -(\div \v,w)_{\O_\star}.
$$
Integration by parts in \eqref{stokes1} and the two equations in \eqref{eq:biot1} 
leads to the interface term
$$
I_{\Gamma_{fp}} = - \<\bs_f\n_f,\v_f\>_{\Gamma_{fp}} - \<\bs_p\n_p,\bxi_p\>_{\Gamma_{fp}}
+ \<p_p,\v_p\cdot\n_p\>_{\Gamma_{fp}}.
$$
Using the first condition for balance of normal stress in \eqref{balance-stress} we set
$$
\lam = -(\bs_f\n_f)\cdot\n_f = p_p \mbox{ on } \Gamma_{fp}, 
$$
which will be used as a Lagrange multiplier to impose the mass conservation 
interface condition \eqref{eq:mass-conservation}.
Utilizing the BJS condition \eqref{Gamma-fp-1} and the second
condition for balance of stresses in \eqref{balance-stress}, we obtain
$$
I_{\Gamma_{fp}} = a_{BJS}(\u_{f},\d_t\bbeta_{p};\v_{f},\bxi_{p}) + 
b_{\Gamma}(\v_{f},\v_{p},\bxi_{p};\lam),
$$
where
\begin{align*}
a_{BJS}(\u_f,\bbeta_p;\v_f,\bxi_p) &= 
\sum_{j=1}^{d-1} 
\big\<\mu\alpha_{BJS}\sqrt{K_j^{-1}}(\u_f - \bbeta_p)
\cdot\btau_{f,j},(\v_f - \bxi_p)\cdot\btau_{f,j} \big\>_{\Gam_{fp}}, \\
b_{\Gamma}(\v_f,\v_p,\bxi_p;\mu) &= \<\v_f\cdot\n_f 
+ (\bxi_p + \v_p)\cdot\n_p,\mu\>_{\Gam_{fp}}.
\end{align*}
For the well-posedness of $b_{\Gamma}$ we require that
$\lam \in \Lambda = (\V_p \cdot \n_p|_{\Gamma_{fp}})'$. According to 
the normal trace theorem, since
$\v_p \in \V_p \subset H({\rm div}; \O_{p})$, then $\v_p \cdot \n_p \in H^{-1/2}(\d\Omega_p)$.
Furthermore, since 
$\v_p \cdot \n_p = 0 \text{ on } \Gamma_p^N$ and 
$\mbox{dist}(\Gamma_p^D,\Gamma_{fp}) \ge s > 0$, then
$\v_p \cdot \n_p \in H^{-1/2}(\Gamma_{fp})$, see, e.g. \cite{Galvis-Sarkis}. 
Therefore we take $\Lambda = H^{1/2}(\Gamma_{fp})$.

The Lagrange multiplier variational formulation is: for $t \in (0,T]$, find
 $\u_f(t) \in \V_f$, $p_f(t) \in W_f$, 
$\u_p(t) \in \V_p$, $p_p(t) \in W_p$, 
$\bbeta_p(t) \in \X_p$, and $\lam(t) \in \Lambda$, such that 
$p_p(0) = p_{p,0}$, $\bbeta_p(0) = \bbeta_{p,0}$, 
and for all
$\v_f \in \V_f$, $w_f \in W_f$, $\v_p \in \V_p$, $w_p \in W_p$, 
$\bxi_p \in \X_p$, and $\mu \in \Lambda$,
\begin{align}
&  
a_{f}(\u_{f},\v_{f}) + a^d_{p}(\u_{p},\v_{p}) + a^e_{p}(\bbeta_{p},\bxi_{p})
+ a_{BJS}(\u_{f},\d_t \bbeta_{p};\v_{f},\bxi_{p}) + b_f(\v_{f},p_{f})+ b_p(\v_{p},p_{p})\nonumber
\\
& \qquad\quad   + 
\alpha b_p(\bxi_{p},p_{p}) + b_{\Gamma}(\v_{f},\v_{p},\bxi_{p};\lam)  = (\f_{f},\v_{f})_{\O_f} + (\f_{p},\bxi_{p})_{\O_p}, \label{h-cts-1}\\
& \left( s_0 \d_t p_{p},w_{p}\right)_{\O_p} 
- \alpha b_p\left(\d_t\bbeta_{p},w_{p}\right) - b_p(\u_{p},w_{p}) - b_f(\u_{f},w_{f}) \nonumber
\\
& \qquad\quad 
= (q_{f},w_{f})_{\O_f} + (q_{p},w_{p})_{\O_p}, \label{h-cts-2} \\
& b_{\Gamma}\left(\u_{f},\u_{p},\d_t\bbeta_{p};\mu\right) = 0. \label{h-cts-gamma}
\end{align}
where we used the notation $\d_t = \frac{\d}{\d t}$.
We note that the balance of normal stress, BJS, and conservation of
momentum interface conditions
\eqref{balance-stress}--\eqref{Gamma-fp-1} are natural and have been
utilized in the derivation of the weak formulation, while the
conservation of mass condition \eqref{eq:mass-conservation} is
essential and it is imposed weakly in \eqref{h-cts-gamma}.  The weak
formulation \eqref{h-cts-1}--\eqref{h-cts-gamma} is suitable for
multiscale numerical approximations and efficient parallel domain
decomposition algorithms
\cite{VasWangYot,APWY,ganis2009implementation,GirVasYot}.


\section{Semi-discrete formulation}
Let $\mathcal{T}^f_h$ and $\mathcal{T}^p_h$ be shape-regular and
quasi-uniform partitions \cite{ciarlet1978finite} of $\O_f$ and
$\O_p$, respectively, both consisting of affine elements with maximal
element diameter $h$.  The two partitions may be non-matching at the
interface $\Gamma_{fp}$. For the discretization of the fluid velocity
and pressure we choose finite element spaces $\V_{f,h} \subset \V_f$
and $W_{f,h} \subset W_f$, which are assumed to be inf-sup stable. 
Examples of such spaces include the MINI elements,
the Taylor-Hood elements and the conforming Crouzeix-Raviart elements.
For the discretization of the porous medium problem we choose
$\V_{p,h} \subset \V_p$ and $W_{p,h}\subset W_p$ to be any of
well-known inf-sup stable mixed finite element spaces, such as the
Raviart-Thomas or the Brezzi-Douglas-Marini spaces. The reader is
referred to \cite{BoffiBrezziFortin} for an overview of stable Stokes
and Darcy mixed finite element spaces. The global spaces are
$$
\V_h = \{\v_h = (\v_{f,h},\v_{p,h}) \in \V_{f,h} \times \V_{p,h}\}, \quad
W_h = \{w_h = (w_{f,h},w_{p,h}) \in W_{f,h} \times W_{p,h}\}.
$$
We employ a conforming Lagrangian finite element space $\X_{p,h} \subset \X_h$ to approximate 
the structure displacement. Note that the finite element 
spaces $\V_{f,h}$, $\V_{p,h}$, and $\X_{p,h}$ satisfy the prescribed 
homogeneous boundary conditions on the external
boundaries.  For the discrete Lagrange multiplier space we take
$$
\Lambda_h = \V_{p,h}\cdot\n_p|_{\Gamma_{fp}}.
$$
The semi-discrete continuous-in-time problem reads: given $p_{p,h}(0)$ and $\bbeta_{p,h}(0)$,
for $t \in (0,T]$, 
find $\u_{f,h}(t) \in \V_{f,h}$, $p_{f,h}(t) \in W_{f,h}$,
$\u_{p,h}(t) \in \V_{p,h}$, $p_{p,h}(t) \in W_{p,h}$, $\bbeta_{p,h}(t) \in \X_{p,h}$, and $\lambda_h(t) \in \Lambda_h$
such that for all $\v_{f,h} \in \V_{f,h}$, $w_{f,h} \in W_{f,h}$,
$\v_{p,h} \in \V_{p,h}$, $w_{p,h} \in W_{p,h}$, $\bxi_{p,h} \in \X_{p,h}$, and $\mu_h \in \Lambda_h$,
\begin{align}
& a_{f}(\u_{f,h},\v_{f,h}) + a^d_{p}(\u_{p,h},\v_{p,h}) + a^e_{p}(\bbeta_{p,h},\bxi_{p,h})+ a_{BJS}(\u_{f,h},\d_t\bbeta_{p,h};\v_{f,h},\bxi_{p,h})  + b_f(\v_{f,h},p_{f,h})  \nonumber
	\\
&\qquad\quad
	 + b_p(\v_{p,h},p_{p,h})+ 
	\alpha b_p(\bxi_{p,h},p_{p,h})  
	+ b_{\Gamma}(\v_{f,h},\v_{p,h},\bxi_{p,h};\lam_h) = (\f_{f},\v_{f,h})_{\O_f} 
+ (\f_{p},\bxi_{p,h})_{\O_p}, \label{h-weak-1} \\
&	( s_0 \d_t p_{p,h},w_{p,h})_{\O_p} 
	- \alpha b_p(\d_t\bbeta_{p,h},w_{p,h}) 
- b_p(\u_{p,h},w_{p,h}) - b_f(\u_{f,h},w_{f,h}) \nonumber
	\\
	&\qquad\quad
= (q_{f},w_{f,h})_{\O_f} + (q_{p},w_{p,h})_{\O_p}, \label{h-weak-2} \\
&
	b_{\Gamma}(\u_{f,h},\u_{p,h},\d_t\bbeta_{p,h};\mu_h) = 0. \label{h-b-gamma}
\end{align}
We will take $p_{p,h}(0) $ and $\bbeta_{p,h}(0)$ to be suitable projections of 
the initial data $p_{p,0}$ and $\bbeta_{p,0}$.

The assumptions on the fluid viscosity $\mu$ and the material coefficients 
$K$, $\lambda_p$, and $\mu_p$ imply that the bilinear forms $a_f(\cdot,\cdot)$,
$a_p^d(\cdot,\cdot)$, and $a_p^e(\cdot,\cdot)$ are coercive and continuous in 
the appropriate norms. In particular, there exist positive constants 
$c^{f}$, $c^{p}$, $c^{e}$, $C^{f}$, $C^{p}$, $C^{e}$ such that 
\begin{align}
& c^f \|\v_f\|^2_{H^1(\O_f)} \leq a_f(\v_f,\v_f), \,\,\,
a_f(\v_f,\q_f) \leq C^f \|\v_f\|_{H^1(\O_f)}\|\q_f\|_{H^1(\O_f)}, && \hspace{-.2cm}
\forall \v_f, \q_f \in \V_{f}, \label{C1}\\
& c^p \|\v_p\|^2_{L^2(\O_p)} \leq a^d_p(\v_p,\v_p), \,\,\, 
a^d_p(\v_p,\q_p) \leq C^p \|\v_p\|_{L^2(\O_p)}\|\q_p\|_{L^2(\O_p)}, && \hspace{-.2cm}
\forall \v_p, \q_p \in \V_{p},  \label{C2}\\
& c^e \|\bxi_p\|^2_{H^1(\O_p)} \leq a^e_p(\bxi_p,\bxi_p), \,\,\, 
a^e_p(\bxi_p,\bzeta_p) \leq C^e \|\bxi_p\|_{H^1(\O_p)}\|\bzeta_p\|_{H^1(\O_p)}, && \hspace{-.2cm}
\forall \bxi_p, \bzeta_p \in \X_{p}, \label{C3}
\end{align}
where \eqref{C1} and \eqref{C3} hold true thanks to Poincare
inequality and \eqref{C3} also relies on Korn's inequality, see
\cite{ciarlet1978finite} or \cite{Ern-Guermond} for more details.
We further define, for $\v_f \in \V_f$, $\bxi_p \in \X_p$,
$$
|\v_f - \bxi_p|_{a_{BJS}}^2 = a_{BJS}(\v_{f},\bxi_{p};\v_{f},\bxi_{p}) 
= \sum_{j=1}^{d-1} \mu\alpha_{BJS}
\|K_j^{-1/4}(\v_{f} - \bxi_{p}) \cdot\btau_{f,j}\|^2_{L^2(\Gam_{fp})}.
$$
We next state a discrete inf-sup condition, which will be utilized to 
control the pressure in the two regions and the Lagrange
multiplier. Following \cite{Galvis-Sarkis}, we define a seminorm in 
$\Lambda_h$, 
\begin{equation}\label{norm-lh}
|\mu_h|_{\Lambda_h}^2 = a_p^d(\u_{p,h}^*(\mu_h),\u_{p,h}^*(\mu_h)),
\end{equation}
where $(\u_{p,h}^*(\mu_h),p_{p,h}^*(\mu_h)) \in \V_{p,h}\times W_{p,h}$ is the mixed 
finite element solution to the Darcy problem with Dirichlet data $\mu_h$ 
on $\Gamma_{fp}$:
\begin{align*}
& a_p^d(\u_{p,h}^*(\mu_h),\v_{p,h}) + b_p(\v_{p,h},p_h^*(\mu_h)) = 
- \<\v_{p,h}\cdot\n_p,\mu_h\>_{\Gamma_{fp}}, \quad \forall \, \v_{p,h} \in \V_{p,h},\\
& b_p(\u_{p,h}^*(\mu_h),w_{p,h}) = 0, \quad \forall \, w_{p,h}  \in W_{p,h}.
\end{align*}
We equip $\Lambda_h$ with the norm 
$\|\mu_h\|_{\Lambda_h}^2 = \|\mu_h\|_{L^2(\Gamma_{fp})}^2 + |\mu_h|_{\Lambda_h}^2$. 
This norm can be considered as a discrete version of the 
$H^{1/2}(\Gamma_{fp})$-norm \cite{Galvis-Sarkis}. For convenience of notation
we define the composite norms
$$
\|(\v_h,\bxi_{p,h})\|_{\V\times\X_p}^2 = \|\v_h\|_{\V}^2 + \|\bxi_{p,h}\|_{H^1(\Omega_p)}^2,
\quad
\|(w_h,\mu_h)\|_{W\times\Lambda_h}^2 = \|w_h\|_W^2 + \|\mu_h\|_{\Lambda_h}^2,
$$
as well as 
\begin{align*}
b(\v_h,\bxi_{p,h};w_h) & = b_f(\v_{f,h},w_{f,h})+ b_p(\v_{p,h},w_{p,h})
+ \alpha b_p(\bxi_{p,h},w_{p,h}), \\
b_{\Gamma}(\v_h,\bxi_{p,h};\mu_h) & = b_{\Gamma}(\v_{f,h},\v_{p,h},\bxi_{p,h};\mu_h).
\end{align*}
The next result establishes the 
Ladyzhenskaya-Babuska-Brezzi (LBB) condition for the mixed Stokes-Darcy problem,
where it is understood that the zero functions are excluded from the inf-sup.  
\begin{lemma}\label{GS-inf-sup}
	There exists a constant $\beta >0$ independent of $h$ such that
	\begin{align} 
	\inf_{(w_h,\mu_h)\in W_h\times\Lambda_h} \sup_{\v_h \in \V_h}
	\frac{b_f(\v_{f,h};w_{f,h})+b_p(\v_{p,h};w_{p,h}) 
+ \langle \v_{f,h}\cdot \n_f + \v_{p,h}\cdot \n_p,\mu_h\rangle}
	{\|\v_h\|_{\V} \|(w_h,\mu_h)\|_{W\times\Lambda_h}}
	\geq \beta. 
	\end{align}
\end{lemma}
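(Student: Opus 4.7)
The plan is to prove the inf-sup by an explicit Fortin-type construction: given $(w_h,\mu_h) \in W_h \times \Lambda_h$, I produce a test velocity $\v_h = (\v_{f,h},\v_{p,h}) \in \V_h$ that realizes the lower bound. The strategy extends the approach of Galvis and Sarkis \cite{Galvis-Sarkis} for the Stokes--Darcy inf-sup to the present mixed setting, by superposing Fortin-type components that respectively control $\|w_{f,h}\|_{L^2(\Omega_f)}$, $\|w_{p,h}\|_{L^2(\Omega_p)}$, and $\|\mu_h\|_{\Lambda_h}$.

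First I would use the inf-sup stability of the Stokes pair $(\V_{f,h},W_{f,h})$ and of the mixed Darcy pair $(\V_{p,h},W_{p,h})$ to pick $\bar\v_{f,h} \in \V_{f,h}$ and $\bar\v_{p,h} \in \V_{p,h}$ satisfying $b_f(\bar\v_{f,h},w_{f,h}) \geq c_1 \|w_{f,h}\|_{L^2(\Omega_f)}^2$ and $b_p(\bar\v_{p,h},w_{p,h}) \geq c_2 \|w_{p,h}\|_{L^2(\Omega_p)}^2$, with $\|\bar\v_{f,h}\|_{H^1(\Omega_f)} \leq C\|w_{f,h}\|_{L^2(\Omega_f)}$ and $\|\bar\v_{p,h}\|_{H(\mathrm{div};\Omega_p)} \leq C\|w_{p,h}\|_{L^2(\Omega_p)}$. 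When available, these components are chosen so that their (normal) traces vanish on $\Gamma_{fp}$, so that they contribute nothing to the interface pairing with $\mu_h$; otherwise any resulting interface terms can be estimated by $C\|w_h\|_W \|\mu_h\|_{\Lambda_h}$ via the trace theorem and absorbed at the end by Young's inequality.

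Next, to capture the multiplier norm, set $\tilde\v_{p,h} := -\u_{p,h}^*(\mu_h)$ and use $(0,\tilde\v_{p,h})$ as the third Fortin component. Testing the auxiliary Darcy problem defining \eqref{norm-lh} with $\v_{p,h} = \u_{p,h}^*(\mu_h)$ and using $b_p(\u_{p,h}^*,p_{p,h}^*) = 0$, which follows from the second equation of that system, yields $\langle \u_{p,h}^*\cdot\n_p,\mu_h\rangle_{\Gamma_{fp}} = -|\mu_h|_{\Lambda_h}^2$, so this component contributes exactly $|\mu_h|_{\Lambda_h}^2$ to the interface pairing. Moreover $b_p(\tilde\v_{p,h},w_{p,h}) = 0$, and coercivity of $a_p^d$ together with $\div \u_{p,h}^* = 0$ (valid because the range of $\div$ on $\V_{p,h}$ lies in $W_{p,h}$ for the standard mixed pairs) give $\|\tilde\v_{p,h}\|_{H(\mathrm{div};\Omega_p)} \leq C|\mu_h|_{\Lambda_h}$. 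The remaining $L^2(\Gamma_{fp})$ contribution to $\|\mu_h\|_{\Lambda_h}$ is then obtained through the $h$-uniform equivalence $\|\mu_h\|_{L^2(\Gamma_{fp})} \leq C|\mu_h|_{\Lambda_h}$ on $\Lambda_h$: integrating by parts in the auxiliary system identifies the discrete trace of $p_{p,h}^*(\mu_h)$ on $\Gamma_{fp}$ with $\mu_h$, and a Poincar\'e estimate (which applies since $|\Gamma_p^D|>0$ and $\mathrm{dist}(\Gamma_p^D,\Gamma_{fp})\geq s>0$) combined with a trace inequality bounds the boundary $L^2$-norm of $p_{p,h}^*$ by $\|\u_{p,h}^*\|_{L^2(\Omega_p)} \leq C|\mu_h|_{\Lambda_h}$.

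Setting $\v_h = \bar\v_h + \delta(0,\tilde\v_{p,h})$ for sufficiently small $\delta > 0$ and absorbing any cross-terms via Young's inequality then produces a lower bound of the form $c\bigl(\|w_h\|_W^2 + \|\mu_h\|_{\Lambda_h}^2\bigr)$ on the numerator, while $\|\v_h\|_\V \leq C\|(w_h,\mu_h)\|_{W \times \Lambda_h}$, yielding the desired $h$-independent constant $\beta$. The main obstacle is the norm equivalence $\|\mu_h\|_{L^2(\Gamma_{fp})} \leq C|\mu_h|_{\Lambda_h}$ on $\Lambda_h$, which is not automatic and relies critically on the geometric separation of $\Gamma_p^D$ from $\Gamma_{fp}$ and on a discrete trace estimate for the auxiliary pressure $p_{p,h}^*$; a secondary difficulty is that the interior Fortin constructions in Stokes and Darcy may not have vanishing interface traces when $W_{f,h}$ or $W_{p,h}$ contain constants, in which case one must split $w_h$ into mean and mean-zero parts and control the resulting cross-terms with the $\u_{p,h}^*$ contribution.
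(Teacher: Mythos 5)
The paper's own proof is a one-line citation: Galvis and Sarkis proved this inf-sup condition under velocity boundary conditions with a mean-zero restriction on the pressure space, and the authors simply observe that the assumption $|\Gamma_p^D|>0$ removes the need for that restriction. You are attempting to reconstruct the Galvis--Sarkis argument from scratch, which is a genuinely different presentation, and your overall Fortin-type plan — separate constructions for $w_{f,h}$, $w_{p,h}$, and $\mu_h$, assembled with small parameters and Young's inequality — is indeed the right skeleton for that proof. Your use of $-\u_{p,h}^*(\mu_h)$ to extract $|\mu_h|_{\Lambda_h}^2$ from the interface pairing, and the observation that $\div\u_{p,h}^*=0$ (since $\div\V_{p,h}\subset W_{p,h}$) bounds $\|\u_{p,h}^*\|_{H(\mathrm{div})}$ by $|\mu_h|_{\Lambda_h}$, are both correct.

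The gap is in how you try to recover the $L^2(\Gamma_{fp})$ part of $\|\mu_h\|_{\Lambda_h}$. You claim an $h$-uniform equivalence $\|\mu_h\|_{L^2(\Gamma_{fp})}\leq C|\mu_h|_{\Lambda_h}$ and justify it by ``integrating by parts in the auxiliary system identifies the discrete trace of $p_{p,h}^*(\mu_h)$ on $\Gamma_{fp}$ with $\mu_h$,'' followed by a Poincar\'e and trace argument. This step does not hold as stated: $p_{p,h}^*$ lives in $W_{p,h}$, which for the spaces in play is a space of \emph{discontinuous} piecewise polynomials (piecewise constants for $\mathcal{RT}_0$--$\mathcal{P}_0$). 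It has no well-defined trace on $\Gamma_{fp}$, and it certainly does not equal $\mu_h$ there; the Dirichlet data enters only weakly through the term $-\langle\v_{p,h}\cdot\n_p,\mu_h\rangle_{\Gamma_{fp}}$, so the ``identification of traces'' you appeal to is unavailable. Note also that if such an equivalence held $h$-uniformly, the $\|\mu_h\|_{L^2(\Gamma_{fp})}^2$ term in the definition of $\|\mu_h\|_{\Lambda_h}^2$ would be redundant — the paper (following Galvis--Sarkis) deliberately includes it, which should make you suspicious.

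The standard fix is not a norm equivalence but a second explicit Fortin component on the Darcy side: construct $\w_{p,h}\in\V_{p,h}$ with $\w_{p,h}\cdot\n_p=\mu_h$ on $\Gamma_{fp}$, $\w_{p,h}\cdot\n_p=0$ on $\Gamma_p^N$, and $\|\w_{p,h}\|_{H(\mathrm{div};\Omega_p)}\leq C\|\mu_h\|_{\Lambda_h}$ (this uses that $\Lambda_h=\V_{p,h}\cdot\n_p|_{\Gamma_{fp}}$ and a bounded discrete extension, which is where the geometric separation of $\Gamma_p^D$ from $\Gamma_{fp}$ enters). Then $\langle\w_{p,h}\cdot\n_p,\mu_h\rangle_{\Gamma_{fp}}=\|\mu_h\|_{L^2(\Gamma_{fp})}^2$ gives the $L^2$ part directly, and adding a small multiple of $\w_{p,h}$ to your $\tilde\v_{p,h}=-\u_{p,h}^*$ completes the multiplier control. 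With that component in place, your outline would close; without it, the argument stops short of the full $\|\mu_h\|_{\Lambda_h}$ norm in the denominator.
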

\begin{proof}
The result is proven in \cite{Galvis-Sarkis} in the case of velocity boundary conditions 
on $\d\Omega$ by restricting the mean value of $W_h$. It can be easily verified that, since
$|\Gamma_p^D| > 0$, the result holds with no restriction on $W_h$.
\end{proof}
This result implies the inf-sup condition for the formulation 
\eqref{h-weak-1}-\eqref{h-b-gamma}.
\begin{corollary}
There exists a constant $\beta >0$ independent of $h$ such that
\begin{align} 
\inf_{(w_h,\mu_h)\in W_h\times\Lambda_h} \sup_{(\v_h,\bxi_{p,h}) \in \V_h\times \X_{p,h}}
\frac{b(\v_h,\bxi_{p,h};w_h) + b_{\Gamma}(\v_h,\bxi_{p,h};\mu_h)}
{\|(\v_h,\bxi_{p,h})\|_{\V\times\X_p} \|(w_h,\mu_h)\|_{W\times\Lambda_h}}
\geq \beta. \label{inf-sup}
\end{align}
\end{corollary}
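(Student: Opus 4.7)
The plan is to reduce the corollary directly to Lemma \ref{GS-inf-sup} by choosing the displacement test function to be zero. This exploits the fact that the only terms in $b$ and $b_{\Gamma}$ that involve $\bxi_{p,h}$ are $\alpha b_p(\bxi_{p,h},w_{p,h})$ and $\langle \bxi_{p,h}\cdot\n_p,\mu_h\rangle$, both of which vanish when $\bxi_{p,h}=0$; meanwhile the displacement norm $\|\bxi_{p,h}\|_{H^1(\Omega_p)}$ in the denominator also vanishes, so nothing is lost in the bound.

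Concretely, I would fix an arbitrary $(w_h,\mu_h)\in W_h\times\Lambda_h$ and restrict the supremum in \eqref{inf-sup} to the subset $\{(\v_h,0):\v_h\in\V_h\}\subset \V_h\times\X_{p,h}$. For such test functions one has
\begin{align*}
b(\v_h,0;w_h) + b_{\Gamma}(\v_h,0;\mu_h)
&= b_f(\v_{f,h},w_{f,h}) + b_p(\v_{p,h},w_{p,h}) + \langle \v_{f,h}\cdot\n_f + \v_{p,h}\cdot\n_p,\mu_h\rangle,
\end{align*}
and $\|(\v_h,0)\|_{\V\times\X_p} = \|\v_h\|_{\V}$. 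Since a supremum over a subset is a lower bound for the supremum over the full set, Lemma \ref{GS-inf-sup} immediately yields
\begin{align*}
\sup_{(\v_h,\bxi_{p,h})\in\V_h\times\X_{p,h}}
\frac{b(\v_h,\bxi_{p,h};w_h)+b_{\Gamma}(\v_h,\bxi_{p,h};\mu_h)}
{\|(\v_h,\bxi_{p,h})\|_{\V\times\X_p}\,\|(w_h,\mu_h)\|_{W\times\Lambda_h}}
\;\geq\;
\sup_{\v_h\in\V_h}
\frac{b_f(\v_{f,h};w_{f,h})+b_p(\v_{p,h};w_{p,h})+\langle \v_{f,h}\cdot\n_f + \v_{p,h}\cdot\n_p,\mu_h\rangle}
{\|\v_h\|_{\V}\,\|(w_h,\mu_h)\|_{W\times\Lambda_h}}
\;\geq\;\beta,
\end{align*}
with exactly the same constant $\beta$ as in Lemma \ref{GS-inf-sup}. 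Taking the infimum over $(w_h,\mu_h)$ gives \eqref{inf-sup}.

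There is essentially no obstacle here; the only point worth noting is that the choice $\bxi_{p,h}=0$ is admissible because the exclusion of the zero element in the supremum refers to the pair $(\v_h,\bxi_{p,h})$ being nonzero, which is ensured as soon as $\v_h\neq 0$ (and Lemma \ref{GS-inf-sup} guarantees we can pick such a $\v_h$ whenever $(w_h,\mu_h)\neq 0$). No new argument involving the elasticity bilinear form or the structure space is required at this stage, which is consistent with the fact that the stability of the displacement variable will instead be controlled through the coercivity of $a_p^e$ in \eqref{C3}.
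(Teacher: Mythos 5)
Your proof is correct and is essentially identical to the paper's: the paper also establishes the corollary by taking $\bxi_{p,h}=0$ and invoking Lemma \ref{GS-inf-sup}. Your write-up merely spells out the supremum-over-a-subset step that the paper leaves implicit.
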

\begin{proof}
The statement follows from Lemma~\ref{GS-inf-sup} by simply taking $\bxi_{p,h} = 0$.
\end{proof}


\subsection{Existence and uniqueness of the solution}
In this section we show that the Stokes-Biot system is
well-posed. For the existence of the solution we adopt the theory of 
differential-algebraic equations (DAEs) \cite{brenan-dae}.

Let $\{\bphi_{\u_{f},i}\}, \{\bphi_{\u_{p},i}\},
\{\bphi_{\bbeta_{p},i}\}, \{\phi_{p_{f},i}\}, \{\phi_{p_{p},i}\}$ and
$\{\phi_{\lambda,i}\}$ be bases of $\V_{f,h}, \V_{p,h}, \X_{p,h},
W_{f,h}, W_{p,h}$ and $\Lambda_h$, respectively. Let $M_p, A_f, A_p,
A_e, B^T_{ff}, B^T_{pp}$ and $B^T_{ep}$ denote the matrices whose
$(i,j)$-entries are, respectively,
$(\phi_{p_{p},j}, \phi_{p_{p},i})_{\Omega_p}$,
$a_f(\bphi_{\u_{f},j},\bphi_{\u_{f},i})$, $a_p^d(\bphi_{\u_{p},j},\bphi_{\u_{p},i})$, 
$a_p^e(\bphi_{\bbeta_{p},j},\bphi_{\bbeta_{p},i})$, 
$b_f(\nabla\cdot \bphi_{\u_{f},j}, \phi_{p_{f},i})$, 
$b_p(\nabla\cdot \bphi_{\u_{p},j}, \phi_{p_{p},i})$, and 
$b_p(\nabla\cdot \bphi_{\bbeta_{p},j}, \phi_{p_{p},i})$.
We also introduce matrices $A_{ff}^{BJS}, A_{fe}^{BJS}$ and
$A_{ee}^{BJS}$ whose $(i,j)$-entries are, respectively,
$a_{BJS}(\bphi_{\u_{f},j},0;\bphi_{\u_{f},i},0)$, $a_{BJS}(\bphi_{\u_{f},j},0;0,\bphi_{\bbeta_{p},i})$,
and $a_{BJS}(0,\bphi_{\bbeta_{p},j};0,\bphi_{\bbeta_{p},j})$.
Finally, let $B_{f,\Gamma}^T,B_{p,\Gamma}^T$ and $B_{e,\Gamma}^T$ stand for the matrices 
with $(i,j)$-entries defined by
$b_{\Gamma}(\bphi_{\u_{f},j},0,0;\phi_{\lambda,i})$, 
$b_{\Gamma}(0,\bphi_{\u_{p},j},0;\phi_{\lambda,i})$, and 
$b_{\Gamma}(0,0,\bphi_{\bbeta_{p},j};\phi_{\lambda,i})$, respectively.

Taking in \eqref{h-weak-1}-\eqref{h-b-gamma} $\u_{f,h}(t,\x) = \sum_i
u_{f,i}(t)\phi_{\u_{f},i}$, $\u_{p,h}(t,\x) = \sum_i
u_{p,i}(t)\phi_{\u_{p},i}$, $\bbeta_{p,h}(t,\x) = \sum_i
\eta_{p,i}(t)\phi_{\bbeta_{p},i}$, $p_{f,h}(t,\x) = \sum_i
p_{f,i}(t)\phi_{p_{f},i}$, $p_{p,h}(t,\x) = \sum_i
p_{p,i}(t)\phi_{p_{p},i}$ and $\lambda_{h}(t,\x) = \sum_i
\lambda_{i}(t)\phi_{\lambda,i}$ with (time-dependent) coefficients
$\overline\u_f, \overline\u_p, \overline\bbeta_p, \overline p_f,
\overline p_p, \overline\lambda$, leads to the matrix-vector system
\begin{align} 
&  A_{f}\,\overline\u_f + A_{p}\,\overline\u_p + A_{e}\,\overline\bbeta_p + A^{BJS}_{ff}\,
\overline\u_f+ A^{BJS}_{fe}\,\d_t\overline\bbeta_p+ B^T_{ff}\,\overline p_f 
	   + \left(B^T_{pp} + \alpha B^T_{ep}\right)\overline p_p \nonumber \\
&\qquad\qquad\qquad\qquad\qquad
+\left(B^T_{f,\Gamma} + B^T_{p,\Gamma} + B^T_{e,\Gamma}\right)\overline\lam = \mathcal{F}_{\u_f} + \mathcal{F}_{\u_p} \label{dae1} \\
& M_{p}\,\d_t \overline p_p - \alpha B_{ep}\,\d_t\overline\bbeta_p
- B_{pp}\,\overline\u_p - B_{ff}\,\overline\u_f  + A^{BJS,T}_{fe}\,\overline\u_f + A^{BJS}_{ee}\,\d_t\overline\bbeta_p  = \mathcal{F}_{p_f} + \mathcal{F}_{p_p},  \label{dae2}\\
& B_{\f,\Gamma}\overline\u_f + B_{p,\Gamma}\overline\u_p + B_{e,\Gamma}\d_t\overline\bbeta_p = 0, \label{dae3}
\end{align}
which can be written in the DAE system form
\begin{align}
    \E\,\d_t X(t) + \H\,X(t) = L(t) \label{DAE},
\end{align}
where
\begingroup
\renewcommand*{\arraystretch}{1.5}
\begin{align}\label{XLE}
        X(t) &= 
        \begin{pmatrix}
            \overline\u_f(t) \\
            \overline\u_p(t) \\
            \overline\bbeta_p(t) \\
            \overline p_f(t) \\
            \overline p_p(t) \\
            \overline \lam(t) 
        \end{pmatrix},
        \quad
        L(t) = 
        \begin{pmatrix}
            \mathcal{F}_{\u_f} \\
            0 \\
            \mathcal{F}_{\bbeta_p} \\
            \mathcal{F}_{p_f} \\
            \mathcal{F}_{p_p} \\
            0
        \end{pmatrix},
        \quad
    \E = 
        \begin{pmatrix}
        	0 & 0 & A^{BJS}_{fe}  & 0 & 0       & 0 \\
        	0   & 0 & 0             & 0 & 0       & 0 \\
        	0   & 0 & A^{BJS}_{ee}  & 0 & 0       & 0 \\
        	0   & 0 & 0             & 0 & 0       & 0 \\
        	0   & 0 & -\alpha B_{ep}& 0 & s_0 M_p & 0 \\
        	0   & 0 & -B_{e,\Gamma} & 0 & 0       & 0
        \end{pmatrix},
\end{align}
\begin{align}     
     \H &=
     \begin{pmatrix}
     	A_f + A^{BJS}_{ff} & 0             & 0   & B^T_{ff}  & 0                & B_{f,\Gamma}^T \\
     	0                  & A_p           & 0   & 0         & B^T_{pp}         & B_{p,\Gamma}^T \\
     	A^{BJS,T}_{fe}     & 0             & A_e & 0         & \alpha B^T_{ep}  & B_{e,\Gamma}^T \\
     	-B_{ff}            & 0             & 0   & 0         & 0                & 0              \\
     	0                  & -B_{pp}       & 0   & 0         & 0                & 0              \\
     	-B_{f,\Gamma}      & -B_{p,\Gamma} & 0   & 0         & 0                & 0
     \end{pmatrix}.
\end{align}
We note that the matrix 
\begin{align*}
     \E+\H = \begin{pmatrix}
     	 A_f + A^{BJS}_{ff} & 0             & A^{BJS}_{fe}     & B^T_{ff} & 0                & B_{f,\Gamma}^T \\
     	0                        & A_p           & 0                & 0         & B^T_{pp}        & B_{p,\Gamma}^T \\
     	A^{BJS,T}_{fe}           & 0             & A_e+A^{BJS}_{ee} & 0         & \alpha B^T_{ep} & B_{e,\Gamma}^T \\
     	-B_{ff}                   & 0             & 0                & 0         & 0                & 0              \\
     	0                        & -B_{pp}        & -\alpha B_{ep}    & 0         & s_0 M_p          & 0              \\
     	-B_{f,\Gamma}            & -B_{p,\Gamma} & -B_{e,\Gamma}    & 0         & 0                & 0
     \end{pmatrix}
\end{align*}
can be written as a block $2\times2$ matrix
\begin{align*}
\E + \H &=
    \begin{pmatrix}
        \A  & \B^T \\
        -\B & \C
    \end{pmatrix},
\end{align*}
where
\begin{align*}
    &\A = \begin{pmatrix} A_f + A_{ff}^{BJS} & 0 & A_{fe}^{BJS} \\ 0 & A_p & 0 \\ A_{fe}^{BJS,T} & 0 & A_e + A_{ee}^{BJS} \end{pmatrix}, \,
    \B^T = \begin{pmatrix} B_{ff}^T & 0 & B_{f,\Gamma}^T \\ 0 & B_{pp}^T & B_{p,\Gamma}^T  \\ 0 & \alpha B^T_{ep} & B_{e,\Gamma}^T  \end{pmatrix}, \,
    \C = \begin{pmatrix} 0 & 0 & 0 \\ 0 & s_0 M_p & 0 \\ 0 & 0 & 0 \end{pmatrix}.
\end{align*}

The following result can be found in \cite{yi-biot}.
\begin{lemma} \label{nonsingDAE1}
    If $\A$ and $\C$ are positive semi-definite and 
$\ker(\A)\cap \ker(\B) = \ker(\C)\cap \ker(\B^T) = \{0\}$, then $\E+\H$ is invertible.
\end{lemma}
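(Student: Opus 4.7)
The plan is to prove invertibility by showing that $\E+\H$ has trivial kernel, since it is a square matrix. Suppose $(x,y)^T$ lies in the kernel, so that
\begin{align*}
\A x + \B^T y &= 0, \\
-\B x + \C y &= 0.
\end{align*}
The key algebraic trick is the skew-symmetric coupling: taking the inner product of the first equation with $x$, of the second with $y$, and adding them, the cross terms $\langle \B^T y, x\rangle$ and $-\langle \B x, y\rangle$ cancel since $\langle \B^T y, x \rangle = \langle y, \B x \rangle$. This yields
\[
\langle \A x, x\rangle + \langle \C y, y\rangle = 0.
\]

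Since $\A$ and $\C$ are positive semi-definite (and symmetric, as can be verified from the symmetry of the underlying bilinear forms $a_f$, $a_p^d$, $a_p^e$, $a_{BJS}$, and the $L^2$ inner product defining $M_p$), both terms in the sum are non-negative and must each vanish. Using the standard fact that a positive semi-definite symmetric matrix $M$ satisfies $\langle M z, z\rangle = 0 \iff M z = 0$ (via a Cholesky-type factorization $M = L^T L$, so that the quadratic form equals $\|Lz\|^2$), I conclude $\A x = 0$ and $\C y = 0$.

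Substituting these back into the original two equations gives $\B^T y = 0$ and $\B x = 0$. Therefore $x \in \ker(\A)\cap\ker(\B)$ and $y \in \ker(\C)\cap\ker(\B^T)$, and the hypotheses force $x = 0$ and $y = 0$. Hence $\E+\H$ has trivial kernel and is invertible.

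The argument is essentially a standard saddle-point-type kernel computation, so no step presents a real obstacle; the only subtlety worth flagging is the use of symmetry of $\A$ (needed to pass from $\langle \A x, x\rangle = 0$ to $\A x = 0$), which should be noted explicitly by pointing out that each block $A_f$, $A_p$, $A_e$, and the BJS blocks $A_{ff}^{BJS}$, $A_{ee}^{BJS}$ arise from symmetric bilinear forms and that the off-diagonal blocks in $\A$ are transposes of each other by construction.
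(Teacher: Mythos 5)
Your proof is correct, and the argument is the standard one for saddle-point-type kernel calculations: pair the two block equations with $x$ and $y$, exploit the skew-symmetric coupling to kill the cross terms, use positive semi-definiteness (plus symmetry) of $\A$ and $\C$ to recover $\A x = 0$, $\C y = 0$, then back-substitute to get $\B x = 0$, $\B^T y = 0$, and invoke the kernel-intersection hypotheses. For context, the paper does not supply its own proof of this lemma; it simply cites an external reference, so there is no in-paper argument to compare against. Your observation that symmetry of $\A$ is genuinely needed to pass from $\langle \A x, x\rangle = 0$ to $\A x = 0$ (and is verified here because every block of $\A$ comes from a symmetric bilinear form, with the off-diagonal BJS blocks being transposes of each other) is the right subtlety to flag, since positive semi-definiteness of a non-symmetric matrix would only yield $(\A + \A^T)x = 0$, which is insufficient.
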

It is convenient to associate with matrices $\A$, $\B$, and $\C$ the bilinear forms
$\phi_{\A}(\cdot,\cdot),\, \phi_{\B}(\cdot,\cdot)$ and
$\phi_{\C}(\cdot,\cdot)$ on $\left(\V_h\times\X_h\right) \times
\left(\V_h\times\X_h\right)$,
$\left(\V_h\times\X_h\right)\times\left(W_h\times\Lambda_h\right)$
and
$\left(W_h\times\Lambda_h\right)\times\left(W_h\times\Lambda_h\right)$, respectively:
\begin{align*}
\phi_{\A}((\u_{h},\bbeta_{p,h}),(\v_{h},\bxi_{p,h})) &= a_f(\u_{f,h},\v_{f,h}) + a_p^d(\u_{p,h},\v_{p,h}) + a_p^e(\bbeta_{p,h},\bxi_{p,h}) \nonumber\\
&+ a_{BJS}(\u_{f,h},\bbeta_{p,h};\v_{v,h},\bxi_{p,h}) \\
\phi_{\B}((\u_{h},\bbeta_{p,h}),(w_{h},\mu_h)) &= b_f(\u_{f,h},w_{f,h}) + b_p(\u_{p,h},w_{p,h}) \nonumber\\
&+ \alpha b_p(\bbeta_{p,h},w_{p,h}) + b_{\Gamma}(\u_{f,h},\u_{p,h},\bbeta_{p,h};\mu_h) \\
\phi_{\C}((p_{h},\lam_h),(w_{h},\mu_h)) &= (s_0p_{p,h},w_{p,h})_{\Omega_p}.
\end{align*}
By identifying functions in the finite element spaces with algebraic vectors of their degrees 
of freedom, we note that $\ker(\phi_{\A}) = \ker(\A)$, $\ker(\phi_{\B}) = \ker(\B)$, and
$\ker(\phi_{\C}) = \ker(\C)$. Also, for $\phi_{\B^T}((w_h,\mu_h),(\v_{h},\bxi_{p,h})) = 
\phi_{\B}((\v_{h},\bxi_{p,h}),(w_{h},\mu_h))$, we have that $\ker(\phi_{\B^T}) = \ker(\B^T)$.
We next show that the conditions of the Lemma~\ref{nonsingDAE1} are satisfied.
\begin{lemma}\label{nonsingDAE2}
The bilinear forms $\phi_{\A},\,\phi_{\B}$ and $\phi_{\C}$ satisfy
\begin{align*}
   \ker(\phi_{\A})\cap \ker(\phi_{\B}) &= \{(0,0)\}, \\
   \ker(\phi_{\C})\cap \ker(\phi_{\B^T}) &= \{(0,0)\}.
\end{align*}
Moreover, $\phi_{\A}$ and $\phi_{\C}$ are positive definite and semi-definite, respectively.
\end{lemma}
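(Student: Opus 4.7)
The plan is to prove the positivity claims first and then derive the two kernel-intersection conditions from them, combined with the inf-sup condition \eqref{inf-sup} of the preceding corollary.

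First, I would show that $\phi_\A$ is symmetric positive definite. Symmetry of each piece $a_f$, $a_p^d$, $a_p^e$ is clear, and $a_{BJS}$ is symmetric in the swap of its two pairs of arguments since it is a sum of $L^2(\Gamma_{fp})$ inner products of the form $\langle(\u_f-\bbeta_p)\cdot\btau_{f,j},(\v_f-\bxi_p)\cdot\btau_{f,j}\rangle$. Testing diagonally, the coercivity bounds \eqref{C1}--\eqref{C3} together with the non-negativity of $a_{BJS}$ yield
$$\phi_\A((\u_h,\bbeta_{p,h}),(\u_h,\bbeta_{p,h})) \geq c^f\|\u_{f,h}\|_{H^1(\Omega_f)}^2 + c^p\|\u_{p,h}\|_{L^2(\Omega_p)}^2 + c^e\|\bbeta_{p,h}\|_{H^1(\Omega_p)}^2,$$
which is strictly positive whenever $(\u_h,\bbeta_{p,h})\neq(0,0)$. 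For $\phi_\C$, symmetry is immediate and $\phi_\C((p_h,\lam_h),(p_h,\lam_h)) = s_0\|p_{p,h}\|_{L^2(\Omega_p)}^2 \geq 0$ because $s_0\geq 0$, so $\phi_\C$ is positive semi-definite.

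With positivity in hand, the two kernel claims follow quickly. Since $\phi_\A$ is symmetric positive definite, any $(\u_h,\bbeta_{p,h}) \in \ker(\phi_\A)$ satisfies $\phi_\A((\u_h,\bbeta_{p,h}),(\u_h,\bbeta_{p,h})) = 0$ and must vanish, so $\ker(\phi_\A) = \{(0,0)\}$ and the first intersection is trivially $\{(0,0)\}$, independently of $\phi_\B$. For the second intersection I would in fact prove the stronger statement $\ker(\phi_{\B^T}) = \{(0,0)\}$: if $(w_h,\mu_h) \in \ker(\phi_{\B^T})$, then
$$b(\v_h,\bxi_{p,h};w_h) + b_\Gamma(\v_h,\bxi_{p,h};\mu_h) = 0 \quad \forall\,(\v_h,\bxi_{p,h}) \in \V_h\times\X_{p,h},$$
so the supremum in \eqref{inf-sup} vanishes and the bound $\beta\|(w_h,\mu_h)\|_{W\times\Lambda_h} \leq 0$ forces $(w_h,\mu_h) = (0,0)$.

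There is no real obstacle here: the whole argument is bookkeeping once one notices that \eqref{inf-sup} already encodes the non-degeneracy needed for the second kernel condition, and that the coercivity estimates \eqref{C1}--\eqref{C3} are strong enough to make $\phi_\A$ itself positive definite, so the first kernel intersection collapses without any further argument involving $\phi_\B$.
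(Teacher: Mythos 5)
Your proposal is correct and follows essentially the same route as the paper: coercivity of $a_f$, $a_p^d$, $a_p^e$ from \eqref{C1}--\eqref{C3} together with non-negativity of $a_{BJS}$ yields $\ker(\phi_\A)=\{(0,0)\}$ outright, the inf-sup condition \eqref{inf-sup} yields $\ker(\phi_{\B^T})=\{(0,0)\}$, and $s_0\geq 0$ gives positive semi-definiteness of $\phi_\C$. The only cosmetic difference is the order of presentation and your passing mention of symmetry of $\phi_\A$, which is not actually needed to conclude that a coercive form has trivial kernel (testing a kernel element against itself already suffices).
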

\begin{proof}
The coercivity of $a_f(\cdot,\cdot)$, $a_p^d(\cdot,\cdot)$, and $a_p^e(\cdot,\cdot)$,
\eqref{C1}--\eqref{C3}, and the non-negativity of $a_{BJS}(\cdot,\cdot)$ imply that 
$\phi_{\A}(\cdot,\cdot)$ is coercive and $\ker(\phi_{\A}) = 0$, hence the 
first statement of the lemma follows. We next note that 
$\ker(\phi_{\B^T})$ consists of $(w_{h},\mu_h)\in W_h\times\Lambda_h$ such that 
$$
\phi_{\B^T}((w_{h},\mu_h),(\v_{h},\bxi_{p,h})) = 0,\quad 
\forall \, (\v_{h},\bxi_{p,h})\in \V_h\times\X_{p,h},
$$
therefore the inf-sup condition (\ref{inf-sup}) implies that $\ker(\phi_{\B^T}) = \{(0,0)\}$,
which gives the second statement of the lemma. The positive semi-definiteness of 
$\phi_{\C}(\cdot,\cdot)$ is straightforward.
\end{proof}
To state the desired result, we will first introduce Bochner spaces equipped with norms:
\begin{align}
\|\phi\|_{L^2(0,T;X)}:=\left(\int_0^T \|\phi(t)\|^2_{X}\, ds\right)^{1/2},\qquad \|\phi\|_{L^{\infty}(0,T;X)}:= \text{ess sup}_{t\in [0,T]}\|\phi(t)\|_X \nonumber \\
\|\phi\|_{W^{1,\infty}(0,T;X)}:=\text{ess sup}_{t\in [0,T]}\{ \|\phi(t)\|_X, \|\d_t\phi(t)\|_X \}. 
\label{Bochner-norms}
\end{align}
\begin{theorem}\label{well-posed}
	There exists a unique solution $(\u_{f,h}, p_{f,h}, \u_{p,h},
        p_{p,h}, \bbeta_{p,h}, \lam_h)$ in \\ $L^{\infty}(0,T;
        \V_{f,h}) \times$ $ L^{\infty}(0,T; W_{f,h}) \times
        L^{\infty}(0,T; \V_{p,h}) \times W^{1,\infty}(0,T; W_{p,h})
        \times W^{1,\infty}(0,T;\X_{p,h}) \times
        L^{\infty}(0,T;\Lambda_h) $ of the weak formulation
        \eqref{h-weak-1}-\eqref{h-b-gamma}.
\end{theorem}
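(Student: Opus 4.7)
The plan is to view the semi-discrete problem \eqref{h-weak-1}--\eqref{h-b-gamma} as the linear constant-coefficient differential-algebraic equation (DAE) \eqref{DAE} with matrix pencil $(\E,\H)$, and to invoke the classical theory from \cite{brenan-dae}. The key ingredient needed from that theory is the regularity of the pencil, which in this setting is equivalent to the invertibility of $\E+\H$ for some choice (here, coefficient $1$). Lemmas \ref{nonsingDAE1} and \ref{nonsingDAE2} combine to give exactly this: the bilinear form $\phi_{\A}$ is coercive by \eqref{C1}--\eqref{C3} together with the non-negativity of $a_{BJS}$, $\phi_{\C}$ is positive semi-definite, the first intersection-of-kernels condition follows from the coercivity of $\phi_{\A}$, and the second condition $\ker(\phi_{\C})\cap\ker(\phi_{\B^T})=\{(0,0)\}$ follows from the inf-sup condition \eqref{inf-sup}.

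With regularity of the pencil in hand, the general DAE existence and uniqueness result yields a unique solution for consistent initial data, which we take to be suitable projections of $p_{p,0}$ and $\bbeta_{p,0}$ onto $W_{p,h}$ and $\X_{p,h}$, respectively. To extract the regularity classes claimed in the theorem, I would exploit the block structure of $\E$: its only nonzero columns correspond to $\bbeta_{p,h}$ and $p_{p,h}$, so these are the dynamic unknowns of the DAE and once the algebraic variables are eliminated they satisfy an underlying ODE, yielding $W^{1,\infty}(0,T;\cdot)$ regularity. The remaining unknowns $\u_{f,h}$, $p_{f,h}$, $\u_{p,h}$, $\lam_h$ are determined at each fixed time from a saddle-point system whose unique solvability follows from the coercivity of $\phi_{\A}$ together with the inf-sup condition \eqref{inf-sup}, giving $L^{\infty}(0,T;\cdot)$ regularity.

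The main technical subtlety is that the $a_{BJS}$ term couples $\u_{f,h}$ with $\d_t\bbeta_{p,h}$, so the equations associated with the test functions $\v_{f,h}$ and $\bxi_{p,h}$ are not purely algebraic; this is precisely what places $A_{fe}^{BJS}$ and $A_{ee}^{BJS}$ into $\E$ rather than $\H$. Because of this, one cannot simply isolate an ODE block and an algebraic block by inspection. The resolution is that the full coupling is captured by the block $2\times 2$ form of $\E+\H$ written out in the excerpt, so that the invertibility of $\E+\H$ established through Lemmas \ref{nonsingDAE1} and \ref{nonsingDAE2} is exactly what is needed to decouple the system and to guarantee that both the elimination step (yielding the ODE for $(\bbeta_{p,h},p_{p,h})$) and the saddle-point step (recovering the algebraic variables) are well-posed. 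I do not anticipate additional obstacles beyond this structural observation.
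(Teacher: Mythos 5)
Your proposal matches the paper's treatment of existence: both invoke the DAE framework of \cite{brenan-dae}, both reduce pencil regularity to the invertibility of $\E+\H$, and both obtain that invertibility from Lemmas \ref{nonsingDAE1} and \ref{nonsingDAE2} (coercivity of $\phi_{\A}$, positive semi-definiteness of $\phi_{\C}$, and the inf-sup condition for $\ker(\phi_{\C})\cap\ker(\phi_{\B^T})=\{(0,0)\}$). Where you diverge is uniqueness. You fold uniqueness into the DAE machinery, which is legitimate: for a linear constant-coefficient DAE with regular pencil, a consistent initial condition determines the solution uniquely. The paper instead gives a separate, self-contained energy proof: substituting the difference of two solutions as test functions (with $\d_t\tilde{\bbeta}_{p,h}$ in place of $\bxi_{p,h}$), applying \eqref{int-parts}, integrating in time to reach \eqref{diff-energy-eq}, and invoking coercivity to kill $\tilde{\u}_{f,h},\tilde{\u}_{p,h},\tilde{\bbeta}_{p,h}$, then using the inf-sup condition \eqref{inf-sup} to kill $\tilde{p}_{f,h},\tilde{p}_{p,h},\tilde{\lam}_h$ simultaneously and uniformly in the parameters (in particular, the argument does not degenerate when $s_0=0$). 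This is more than redundancy: the energy identity is exactly the template for the stability bound \eqref{energy-eq}--\eqref{energy-ineq4} in the next section, and the inf-sup step for the pressures and multiplier is the template for \eqref{p-bound}. Your structural observation about which columns of $\E$ are nonzero and why $A_{fe}^{BJS}, A_{ee}^{BJS}$ sit in $\E$ rather than $\H$ is correct, and your sketch of where the $W^{1,\infty}$ versus $L^{\infty}$ regularity comes from is a reasonable addition that the paper leaves implicit; note, though, that the paper also explicitly justifies consistency of the initial data by identifying the two algebraic rows (the discretized Darcy law, satisfiable by choosing $\u_{p,h}(0)$, and the discrete Stokes incompressibility, which does not involve initial data), a step your proposal only mentions in passing.
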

\begin{proof}
According to the DAE theory, see Theorem 2.3.1 in \cite{brenan-dae},
if the matrix pencil $s\E + \H$ is nonsingular for some $s\neq 0$ and
the initial data is consistent, then \eqref{DAE} has a solution.
Lemma \ref{nonsingDAE2} guarantees that in our case the pencil with
$s=1$ is invertible. Also, the initial data $p_{p,h}(0)$ and
$\bbeta_{p,h}(0)$ does not lead to consistency issues. In particular,
the only algebraic constraints in the DAE system \eqref{DAE} are the
second and fourth equations, see the definition of $\E$ in
\eqref{XLE}. The second equation is the discretized Darcy's law, and
the initial value $\u_{p,h}(0)$ can be chosen to satisfy it for any
given $p_{p,h}(0)$, while the fourth equation is the discretized
incompressibility constraint for Stokes, which does not involve the
initial data. Furthermore, the initial data can be assumed to satify 
the boundary conditions. As a result, Theorem 2.3.1 in \cite{brenan-dae}
implies existence of a solution of the weak semi-discrete 
formulation \eqref{h-weak-1}-\eqref{h-b-gamma}.

To show uniqueness, we assume that there are two solutions satisfying
these equations with the same initial conditions. Then their
difference
$(\tilde{\u}_{f,h},\tilde{p}_{f,h},\tilde{\u}_{p,h},\tilde{p}_{p,h},\tilde{\bbeta}_{p,h},\tilde{\lam}_h)$
satisfies \eqref{h-weak-1}-\eqref{h-b-gamma} with zero data.  By
taking $(\v_{f,h},w_{f,h},\v_{p,h},w_{p,h},\bxi_{p,h},\mu_h) =
(\tilde{\u}_{f,h},\tilde{p}_{f,h},\tilde{\u}_{p,h},\tilde{p}_{p,h},\d_t\tilde{\bbeta}_{p,h},\tilde{\lam}_h)$
in \eqref{h-weak-1}-\eqref{h-b-gamma}, we obtain the energy equality
\begin{align*}
 a_{f}(\tilde{\u}_{f,h},\tilde{\u}_{f,h}) + a^d_{p}(\tilde{\u}_{p,h},\tilde{\u}_{p,h}) + a^e_{p}\left(\tilde{\bbeta}_{p,h},\d_t \tilde{\bbeta}_{p,h}\right)+ \left(s_0\d_t \tilde{p}_{p,h},\tilde{p}_{p,h}\right) + \left|\tilde{\u}_{f,h} - \d_t \tilde{\bbeta}_{p,h} \right|^2_{a_{BJS}} 
= 0
\end{align*}
Using the algebraic identity
\begin{align}
\int_{S}\phi \frac{\d \phi}{\d t} = \frac12 \frac{\d }{\d t}\|\phi\|^2_{L^2(S)}  
\label{int-parts}
\end{align}
we write the energy equality as
\begin{align*}
\frac12 \d_t\left( s_0 \|\tilde{p}_{p,h}\|^2_{L^2(\O_p)} + a^e_{p}(\tilde{\bbeta}_{p,h},\tilde{\bbeta}_{p,h})\right)
+ a_{f}(\tilde{\u}_{f,h},\tilde{\u}_{f,h}) + a^d_{p}(\tilde{\u}_{p,h},\tilde{\u}_{p,h}) + \left|\tilde{\u}_{f,h} - \d_t\tilde{\bbeta}_{p,h} \right|^2_{a_{BJS}} = 0
\end{align*}
Integrating in time over $[0,t]$ for arbitrary $t \in (0,T]$, we obtain 
\begin{align}
&\frac12 \left( s_0 \|\tilde{p}_{p,h}(t)\|^2_{L^2(\O_p)} 
+ a^e_{p}(\tilde{\bbeta}_{p,h}(t),\tilde{\bbeta}_{p,h}(t))\right) \nonumber\\
& \qquad\qquad +
\int_0^t \left[\left|\tilde{\u}_{f,h} - \d_t\tilde{\bbeta}_{p,h}\right|^2_{a_{BJS}}   +
a_{f}(\tilde{\u}_{f,h},\tilde{\u}_{f,h}) + a^d_{p}(\tilde{\u}_{p,h},\tilde{\u}_{p,h})\right]\, ds  = 0. 
\label{diff-energy-eq}
\end{align}
Due to the coercivity of bilinear forms, we conclude that
$\tilde{\u}_{f,h}(t)=0,\, \tilde{\u}_{p,h}(t)=0,\,\tilde{\bbeta}_{p,h}(t) =0,\, \forall
t\in [0,T]. $ If $s_0 \neq 0$, we also have that $\tilde{p}_{p,h}(t)=0$, but
we can also obtain uniqueness for both pressure variables and
the Lagrange multiplier simultaneously and independently of parameters. In particular,
from the inf-sup condition \eqref{inf-sup} and \eqref{h-weak-1}, 
we have for $(\tilde{p}_h,\tilde{\lam}_h)$
\begin{align*}
\beta\|(\tilde{p}_{h},\tilde{\lam}_h)\|_{W\times \Lambda_h}\leq \sup_{(\v_h,\bxi_{p,h}) \in \V_h\times \X_{p,h}} \frac{b_f(\v_{f,h},\tilde{p}_{f,h}) + b_p(\v_{p,h},\tilde{p}_{p,h}) + 
	\alpha b_p(\bxi_{p,h},\tilde{p}_{p,h})+b_{\Gamma}(\v_{f,h},\v_{p,h},\bxi_{p,h};\tilde{\lam}_h)} {\|(\v_h,\bxi_h)\|_{\V\times \X_p}} \\
 = \sup_{(\v_h,\bxi_{p,h}) \in \V_h\times \X_{p,h}}\bigg[ \frac{-a_f(\tilde{\u}_{f,h},\v_{f,h})-a_p^d(\tilde{\u}_{p,h},\v_{p,h})-a_p^e(\tilde{\bbeta}_{p,h},\bxi_{p,h})-a_{BJS}(\tilde{\u}_{f,h},\d_t\tilde{\bbeta}_{p,h};\v_{f,h},\bxi_{p,h})} {\|(\v_h,\bxi_h)\|_{\V\times \X_p}}  \bigg] = 0.
\end{align*}
Therefore, we conclude that $\tilde{p}_{f,h}(t)=0,\,\tilde{p}_{p,h}(t)=0,\,\tilde{\lam}_h(t)=0,\,\forall t\in (0,T]$ 
and the solution of \eqref{h-weak-1}-\eqref{h-b-gamma} is unique.
\end{proof}
The next two sections are devoted to the stability and error analysis
of the semi-discrete problem.

\section{Stability analysis of the semi-discrete formulation}
We will make use of the following well-known inequalities:

$\bullet$ (Cauchy-Schwarz) For any $u,v \in L^2(S)$,
\begin{equation}
(u,v)_S \leq \|u\|_{L^2(S)}\|v\|_{L^2(S)},
 \label{CS}
\end{equation}

$\bullet$ (Trace) For any $v \in H^1(S)$,
\begin{align}
& \|v\|_{L^2(\partial S)} \leq C\|v\|_{H^1(S)},
 \label{Trace}
\end{align}

$\bullet$ (Young's) For any real numbers $a,b$ and $\epsilon >0$,
\begin{align}
& ab \leq \frac{\epsilon a^2}{2} + \frac{b^2}{2\epsilon}, \label{Young}
\end{align}

$\bullet$ (Gronwall's) Let $g(t) \geq 0$ and
$ u(t) \leq f(t) + \int_{s}^{t}g(\tau)u(\tau) d\tau $, then
\begin{align}
&  u(t) \leq f(t) 
+  \int_{s}^{t}f(\tau)g(\tau)\exp\left(\int_{\tau}^{t}g(r)dr\right) d\tau. \label{Gronwall}
\end{align}
For the sake of simplicity, throughout the analysis, $C$ will denote a
generic positive constant independent of the mesh size. We will also
abuse notation by denoting $\epsilon$ as an arbitrary constant
with different values at different occurrences, arising from the usage
of inequality \eqref{Young}.

By taking $(\v_{f,h},w_{f,h},\v_{p,h},w_{p,h},\bxi_{p,h},\mu_h) 
= \left(\u_{f,h},p_{f,h},\u_{p,h},p_{p,h},\d_t \bbeta_{p,h},\lam_h\right)$
in \eqref{h-weak-1}--\eqref{h-b-gamma} and proceeding as in the uniqueness proof,
Theorem~\ref{well-posed}, we obtain
\begin{align}
&\frac12 \left(s_0 \|p_{p,h}(t)\|^2_{L^2(\O_p)} + a^e_{p}(\bbeta_{p,h}(t),\bbeta_{p,h}(t))\right) 
\nonumber\\
&\quad+
\int_0^t \left[ \left|\u_{f,h} - \d_t\bbeta_{p,h}\right|^2_{a_{BJS}} 
+ a_{f}(\u_{f,h},\u_{f,h}) + a^d_{p}(\u_{p,h},\u_{p,h})\right]\, ds   \nonumber\\
&\quad = \frac12 \left(s_0 \|p_{p,h}(0)(t)\|^2_{L^2(\O_p)} 
+ a^e_{p}(\bbeta_{p,h}(0),\bbeta_{p,h}(0))\right)
+\int_0^t \mathcal{F}\left(t;\u_{f,h},\d_t \bbeta_{p,h},p_{f,h},p_{p,h}\right)\, ds,  
\label{energy-eq}
\end{align}
where $\mathcal{F}\left(t;\u_{f,h},\d_t \bbeta_{p,h},p_{f,h},p_{p,h}\right)$ 
denotes the total forcing term:
\begin{align*}
\mathcal{F}\left(t;\u_{f,h},\d_t \bbeta_{p,h},p_{f,h},p_{p,h}\right) &= (\f_{f},\u_{f,h})_{\O_f} + \left(\f_{p},\d_t \bbeta_{p,h}\right)_{\O_p} + (q_{f},p_{f,h})_{\O_f} + (q_{p},p_{p,h})_{\O_p} 
\end{align*}
Using integration by parts in time, we write the forcing term as
\begin{align*}
 \mathcal{F}\left(t;\u_{f,h},\d_t \bbeta_{p,h},p_{f,h},p_{p,h}\right) 
=  (\f_{f},\u_{f,h})_{\O_f} + \d_t\left(\f_{p}, \bbeta_{p,h}\right)_{\O_p} 
- \left(\d_t \f_{p}, \bbeta_{p,h}\right)_{\O_p} 
+ (q_{f},p_{f,h})_{\O_f} + (q_{p},p_{p,h})_{\O_p}.
\end{align*}
Therefore, for any $\epsilon_1 >0$, we have
\begin{align}
& \int_0^t \mathcal{F}\left(t;\u_{f,h},\d_t \bbeta_{p,h},p_{f,h},p_{p,h}\right)ds
\nonumber \\
& \qquad \leq \frac{1}{2}
\|\bbeta_{p,h}(0)\|^2_{L^2(\O_p)} + \frac12\|\f_{p}(0)\|^2_{L^2(\O_p)}
+ \frac12\int_0^t \left(\|\bbeta_{p,h}\|^2_{L^2(\O_p)}
+ \left\|\d_t \f_{p}\right\|^2_{L^2(\O_p)} 
\right) ds 
\nonumber\\
&\qquad\qquad  + \frac{\epsilon_1}{2}\bigg(\|\bbeta_{p,h}(t)\|^2_{L^2(\O_p)} 
+   \int_0^t \left( \|\u_{f,h}\|^2_{L^2(\O_f)} 
+ \|p_{f,h}\|^2_{L^2(\O_f)}+\|p_{p,h}\|^2_{L^2(\O_p)}\right) ds\bigg) \nonumber\\
&\qquad\qquad + \frac{1}{2\epsilon_1}\left(\left\|\f_{p}(t)\right\|^2_{L^2(\O_p)} + \int_0^t \left( \|\f_{f}\|^2_{L^2(\O_f)}
+ \|q_{f}\|^2_{L^2(\O_f)}+\|q_{p}\|^2_{L^2(\O_p)}\right) ds\right). 
\label{forcing-term}
\end{align}
Combining \eqref{energy-eq}, \eqref{forcing-term} and \eqref{C1}--\eqref{C3}, 
and taking $\epsilon_1$ small enough, we obtain
\begin{align}
& s_0 \|p_{p,h}(t)\|^2_{L^2(\O_p)} + \|\bbeta_{p,h}(t)\|^2_{H^1(\O_p)} +
\int_0^t \left(\left|\u_{f,h} - \d_t\bbeta_{p,h} \right|^2_{a_{BJS}}  
+ \|\u_{f,h}|^2_{H^1(\O_f)}  
+ \|\u_{p,h}\|^2_{L^2(\O_p)} \right)\, ds \nonumber \\
&\leq C \epsilon_1\int_0^t \left( 
\|p_{f,h}\|^2_{L^2(\O_f)}+\|p_{p,h}\|^2_{L^2(\O_p)}\right) ds 
+ C \int_0^t \|\bbeta_{p,h}\|^2_{L^2(\O_p)} ds
\nonumber \\
&\qquad +C \left(s_0 \|p_{p,h}(0)\|^2_{L^2(\O_p)} 
+ \|\bbeta_{p,h}(0)\|^2_{H^1(\O_p)} + \|\f_{p}(0)\|^2_{L^2(\O_p)}
+ \int_0^t \left\| \d_t \f_{p}\right\|^2_{L^2(\O_p)} ds \right)\nonumber\\
& \qquad + C \epsilon_1^{-1}\left( \left\|\f_{p}(t)\right\|^2_{L^2(\O_p)} 
+ \int_0^t \left( \|\f_{f}\|^2_{L^2(\O_f)}
+ \|q_{f}\|^2_{L^2(\O_f)}+\|q_{p}\|^2_{L^2(\O_p)}\right) ds\right). 
\label{energy-ineq1}
\end{align}
Finally, from the inf-sup condition \eqref{inf-sup} and \eqref{h-weak-1}, 
we have 
\begin{align*}
&\|(p_{h},\lam_h)\|_{W\times \Lambda_h} \\
&\qquad \leq C\sup_{(\v_h,\bxi_{p,h}) \in \V_h\times \X_{p,h}} \frac{b_f(\v_{f,h},p_{f,h}) + b_p(\v_{p,h},p_{p,h}) + 
	\alpha b_p(\bxi_{p,h},p_{p,h})+b_{\Gamma}(\v_{f,h},\v_{p,h},\bxi_{p,h};\lam_h)} {\|(\v_h,\bxi_{p,h})\|_{\V\times \X_p}} \\
	& \qquad = C\sup_{(\v_h,\bxi_{p,h}) \in \V_h\times \X_{p,h}}\left[ 
\frac{-a_f(\u_{f,h},\v_{f,h})-a_p^d(\u_{p,h},\v_{p,h})
-a_p^e(\bbeta_{p,h},\bxi_{p,h})} {\|(\v_h,\bxi_{p,h})\|_{\V\times \X_p}} \right. \\
	& \left. \qquad\qquad+\frac{-a_{BJS}(\u_{f,h},\d_t\bbeta_{p,h};\v_{f,h},\bxi_{p,h})+(\f_{f},\v_{f,h})+(\f_{p},\bxi_{p,h})} {\|(\v_h,\bxi_{p,h})\|_{\V\times \X_p}} \right],
\end{align*}
which, combined with \eqref{C1}--\eqref{C3}, gives
\begin{align}
& \epsilon_2 \int_0^t \left(  \|p_{f,h}\|^2_{L^2(\O_f)} +\|p_{p,h}\|^2_{L^2(\O_p)}
+ \|\lambda\|_{\Lambda_h}^2 \right)\, ds 
\leq C \epsilon_2\int_0^t \left(  \left\|  \u_{f,h}\right\|^2_{H^1(\O_f)}+\left\|  \u_{p,h}\right\|^2_{L^2(\O_p)}+\left\|  \bbeta_{p,h}\right\|^2_{H^1(\O_p)} \right. \nonumber \\
&  \qquad\qquad\qquad\qquad \left.
+ |\u_{f,h}-\d_t\bbeta_{p,h}|^2_{a_{BJS}}
+ \|\f_{f}\|^2_{L^2(\O_f)}
+\|\f_{p}\|^2_{L^2(\O_p)}\right)\, ds. \label{p-bound}
\end{align}
Adding \eqref{energy-ineq1} and \eqref{p-bound} and taking $\epsilon_2$ 
small enough, and then $\epsilon_1$ small enough, implies
\begin{align}
& s_0 \|p_{p,h}(t)\|^2_{L^2(\O_p)} + \|\bbeta_{p,h}(t)\|^2_{H^1(\O_p)}+
\int_0^t \bigg(\left|\u_{f,h} - \d_t\bbeta_{p,h}\right|^2_{a_{BJS}} \nonumber\\
& \quad + \|\lam_h\|^2_{\Lambda_h}+\|p_{f,h}\|^2_{L^2(\O_f)}
+\|p_{p,h}\|^2_{L^2(\O_p)}  + \|\u_f\|^2_{H^1(\O_f)}  + \|\u_p\|^2_{L^2(\O_p)} \bigg)\, ds 
\nonumber \\
&\leq C\left(  \int_0^t  \|\bbeta_{p,h}\|^2_{H^1(\O_p)}\, ds 
+ s_0 \|p_{p,h}(0)\|^2_{L^2(\O_p)} + \|\bbeta_{p,h}(0)\|^2_{H^1(\O_p)}
+ \|\f_{p}(0)\|^2_{L^2(\O_p)} \right. \nonumber \\
& \left. \quad + \int_0^t \left( \|\f_{f}\|^2_{L^2(\O_f)}+ \|\f_{p}\|^2_{L^2(\O_p)}+\left\| \d_t \f_{p}\right\|^2_{L^2(\O_p)} + \|q_{f}\|^2_{L^2(\O_f)}+\|q_{p}\|^2_{L^2(\O_p)}\right)ds \right).
\label{energy-ineq4}
\end{align}
The use of Gronwall's inequality \eqref{Gronwall} implies the following stability result.
\begin{theorem}
The solution of the semi-discrete problem \eqref{h-weak-1}--\eqref{h-b-gamma} satisfies
\begin{align}\label{stability-semi-discrete}
& \sqrt{s_0} \|p_{p,h}\|_{L^{\infty}(0,T;L^2(\O_p))} + \|\bbeta_{p,h}\|_{L^{\infty}(0,T;H^1(\O_p))} + \|\u_f\|_{L^2(0,T;H^1(\O_f))}   + \|\u_p\|_{L^2(0,T;L^2(\O_p))} \nonumber\\
&\quad+\|p_{f,h}\|_{L^2(0,T;L^2(\O_f))}+\|p_{p,h}\|_{L^2(0,T;L^2(\O_p))} +\|\lam_h\|_{L^2(0,T;\Lambda_h)}+
 \left|\u_{f,h} - \d_t\bbeta_{p,h}\right|_{L^2(0,T;a_{BJS})} \nonumber\\
&\leq C\sqrt{\exp(T)}\left( \sqrt{s_0}\|p_{p,h}(0)\|_{L^2(\O_p)} +\|\bbeta_{p,h}(0)\|_{H^2(\O_p)}+ \left\|\f_{p}\right\|_{L^{\infty}(0,T;L^2(\O_p))} +\left\|\f_{p}\right\|_{L^{2}(0,T;L^2(\O_p))} \right. 
\nonumber\\
&\quad \left. 
 + \|\f_{f}\|_{L^{2}(0,T;L^2(\O_f))}+ \left\| \d_t \f_{p}\right\|_{L^{2}(0,T;L^2(\O_p))}+ \|q_{f}\|_{L^{2}(0,T;L^2(\O_f))} +\|q_{p}\|_{L^{2}(0,T;L^2(\O_p))} \right). 
\end{align}
\end{theorem}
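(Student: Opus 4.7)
The plan is to apply Gronwall's inequality \eqref{Gronwall} directly to the energy estimate \eqref{energy-ineq4}, which has been engineered so that every norm appearing in the stated bound already sits, in its squared form, on the left-hand side.

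First I would denote by $E(t)$ the entire left-hand side of \eqref{energy-ineq4} and by $F(t)$ the collection of all data-dependent terms on the right. Since $E(t)$ is a sum of pointwise nonnegative quantities at time $t$ together with time integrals of nonnegative integrands, and since $\|\bbeta_{p,h}(s)\|^2_{H^1(\O_p)}\le E(s)$ for every $s\le t$, the only non-data term on the right of \eqref{energy-ineq4} can be bounded by $C\int_0^t\|\bbeta_{p,h}\|^2_{H^1(\O_p)}\,ds \le C\int_0^t E(s)\,ds$. Hence \eqref{energy-ineq4} takes the abstract form $E(t)\le F(t)+C\int_0^t E(s)\,ds$, which is exactly the hypothesis of Gronwall's inequality \eqref{Gronwall}.

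Next, I would apply Gronwall with $f=F$ and $g\equiv C$. The pointwise summand $\|\f_p(t)\|^2_{L^2(\O_p)}$ in $F(t)$ is dominated by $\|\f_p\|^2_{L^\infty(0,T;L^2(\O_p))}$, and every remaining summand of $F$ is either a fixed initial quantity or a monotone time integral; therefore $F(t)\le F(T)$, and the conclusion of Gronwall's inequality simplifies to $E(t)\le C\,e^{CT}F(T)$ for every $t\in[0,T]$. Taking the supremum over $t\in[0,T]$ converts the pointwise-in-$t$ terms $s_0\|p_{p,h}(t)\|^2_{L^2(\O_p)}$ and $\|\bbeta_{p,h}(t)\|^2_{H^1(\O_p)}$ on the left into squared $L^\infty(0,T;\cdot)$ norms, while the time-integrated terms on the left are already squared $L^2(0,T;\cdot)$ norms at $t=T$. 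A final square root, together with the subadditivity $\sqrt{a+b}\le\sqrt{a}+\sqrt{b}$, distributes the bound across the individual norms appearing on the left of \eqref{stability-semi-discrete} and produces the factor $\sqrt{\exp(T)}$.

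The only delicate point is bookkeeping: one must verify term by term that every data quantity in $F(T)$ is dominated by one of the norms listed on the right of \eqref{stability-semi-discrete}, and that the constant $C$ can be chosen independently of $h$. The latter is immediate since the coercivity constants $c^f,c^p,c^e$ of \eqref{C1}--\eqref{C3} and the inf-sup constant $\beta$ of \eqref{inf-sup}, which are the only constants entering the derivation of \eqref{energy-ineq4}, are all $h$-independent. No additional inequality beyond those already used to obtain \eqref{energy-ineq4} is needed.
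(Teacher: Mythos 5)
Your argument is correct and is exactly the route the paper takes: the paper's proof of this theorem is literally the single sentence ``The use of Gronwall's inequality \eqref{Gronwall} implies the following stability result'' placed after \eqref{energy-ineq4}, and you have simply unpacked that sentence carefully, identifying $E(t)$ as the left side, bounding the one non-data term $C\int_0^t\|\bbeta_{p,h}\|^2_{H^1(\O_p)}\,ds$ by $C\int_0^t E(s)\,ds$, applying Gronwall with $g\equiv C$, dominating the pointwise forcing term by its $L^\infty$ norm to make $F$ monotone, and distributing the square root. One cosmetic remark: as in the paper, the Gronwall constant is a generic $C$, so the exponential factor produced is $e^{CT}$ rather than literally $e^{T}$; the paper's $C\sqrt{\exp(T)}$ absorbs this into notation, and your proof does the same.
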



\section{Error analysis}
In this section, we analyze the error arising due to discretization in
space. We denote by $k_f$ and $s_f$ the degrees of polynomials in
the spaces $\V_{f,h}$ and $W_{f,h}$ respectively. Let $k_p$ and $s_p$ be the
degrees of polynomials in the spaces $\V_{p,h}$ and $W_{p,h}$
respectively. Finally, let $k_s$ be the polynomial degree in $\X_{p,h}$.


\subsection{Approximation error}
Let $Q_{f,h}$, $Q_{p,h}$, and $Q_{\lambda,h}$ be the $L^2$-projection operators onto $W_{f,h}$,
$W_{p,h}$, and $\Lambda_h$ respectively, satisfying:
\begin{align}
& (p_{f}-Q_{f,h}p_{f},w_{f,h})_{\Omega_f}=0,&& \forall \, w_{f,h} \in W_{f,h} 
\label{fluid-pressure-int}\\
& (p_{p}-Q_{p,h}p_{p},w_{p,h})_{\Omega_p}=0,&& \forall \, w_{p,h} \in W_{p,h} 
\label{darcy-pressure-int}\\
&  \langle\lambda-Q_{\lambda,h}\lambda,\mu_h\rangle_{\Gamma_{fp}}=0,&& 
\forall \, \mu_h \in \Lambda_h \label{l-multuplier-int}
\end{align}
These operators satisfy the approximation properties \cite{ciarlet1978finite}:
 	\begin{eqnarray}
 	&& \|p_f - Q_{f,h}p_f\|_{L^2(\O_f )} \leq Ch^{s_f+1}\|p_f\|_{H^{s_f+1}(\O_f)}, \label{stokesPresProj}\\
 	&& \|p_p - Q_{p,h}p_p\|_{L^2(\O_p )} \leq Ch^{s_p+1}\|p_p\|_{H^{s_p+1}(\O_p)}, \label{darcyPresProj}\\
 	&&  \|\lambda - Q_{\lambda,h} \lambda\|_{L^2(\Gamma_{fp} )} \leq Ch^{k_p+1} \|\lambda\|_{H^{k_p+1}(\Gamma_{fp})}. \label{LMProj}
 	\end{eqnarray}	
Since the discrete Lagrange multiplier space is chosen as 
$\Lambda_h = \V_{p,h}\cdot \n_p|_{\Gam_{fp}}$, we have
\begin{align*}
\langle\lambda - Q_{\lambda,h}\lambda, \v_{p,h}\cdot \n_p\rangle_{\Gam_{fp}} =0, && 
\forall \, \v_{p,h}\in \V_{p,h}.
\end{align*}
We note that the discrete seminorm \eqref{norm-lh} in $\Lambda_h$ 
is well defined for any function in $L^2(\Gamma_{fp})$. It is easy to see that 
$|\lambda - Q_{\lambda,h}\lambda|_{\Lambda_h} =0$, hence
\begin{align}
\|\lambda - Q_{\lambda,h}\lambda\|_{\Lambda_h} = \|\lambda - Q_{\lambda,h}\lambda\|_{L^2(\Gam_{fp})}. 
\label{lam-proj-prop}
\end{align}
Next, we consider a Stokes-like projection operator 
$(S_{f,h}, R_{f,h}) : \V_f \rightarrow \V_{f,h} \times W_{f,h}$, defined for all $\v_f \in \V_f$ by
\begin{align}
& a_f(S_{f,h}\textbf{v}_f, \v_{f,h} ) - b_f( \v_{f,h},R_{f,h}\textbf{v}_f) = a_f (\v_f, \v_{f,h} ), 
&&\forall \v_{f,h} \in \V_{f,h}, \\
& b_f(S_{f,h}\textbf{v}_f,w_{f,h})= b_f(\textbf{v}_f,w_{f,h}), &&\forall w_{f,h} \in W_{f,h}. 
\label{stokes-proj}
\end{align}
The operator $S_{f,h}$ satisfies the approximation property \cite{fernandez2011incremental}:
\begin{align}
\|\v_f-S_{f,h}\v_f\|_{H^1(\O_f)} \leq Ch^{k_f}\|\v_f\|_{H^{k_f+1}(\O_f)}. 
\label{eq:stokes-like-approx-prop}
\end{align}
Let $\Pi_{p,h}$ be the MFE interpolant onto $\V_{p,h}$ satisfying for any 
$\theta>0$ and for all $\v_p \in \V_{p}\cap H^{\theta}(\Omega_p)$,
\begin{align}
& (\nabla \cdot \Pi_{p,h}\v_p,w_{p,h}) = (\nabla \cdot \v_p, w_{p,h}), && 
\forall w_{p,h} \in W_{p,h}, \label{mfe-operator}
\\
& \langle \Pi_{p,h} \v_p\cdot\n_p, \v_{p,h}\cdot\n_p \rangle_{\Gamma_{fp}} = 
\langle \v_p\cdot\n_p, \v_{p,h}\cdot\n_p \rangle_{\Gamma_{fp}}, && \forall 
\v_{p,h} \in \V_{p,h}. \label{mfe-operator-bis}
\end{align}
We will make use of the following bounds on $\Pi_{p,h}$ \cite{ciarlet1978finite,Mathew_1989}:
\begin{align}
& \|\v_p-\Pi_{p,h}\v_p\|_{L^2(\O_p)} \leq Ch^{k_p+1}\|\v_p\|_{H^{k_p+1}(\O_p)}, 
\label{eq:mfe-approx-prop1} \\
& \|\Pi_{p,h}\v_p\|_{H(\text{div};\O_p)} \leq C\left(\|\v_p\|_{H^{\theta}(\O_p)} 
+ \|\nabla \cdot\v_p\|_{L^{2}(\O_p)} \right). \label{eq:mfe-approx-prop2}
\end{align}
Finally, let $S_{s,h}$ be the Scott-Zhang interpolant 
onto $\X_{p,h}$, satisfying for all $\bxi_p \in H^{k_s+1}(\O_p)$ \cite{Scott_Zhang_1990}:
\begin{align}
\|\bxi_p-S_{s,h}\bxi_p\|_{L^2(\O_p)} + h|\bxi_p-S_{s,h}\bxi_p|_{H^1(\O_p)}\leq Ch^{k_s+1}\|\bxi_p\|_{H^{k_s+1}(\O_p)} \label{eq:scott-zhang-approx-prop1} 
\end{align}

\subsubsection{Construction of a weakly-continuous interpolant}
In this section we use the operators defined above to build an operator onto the space that
satisfies the weak continuity of normal velocity condition \eqref{h-b-gamma}. Let 
$$
\U = \{ (\v_f,\v_p,\bxi_p) \in \V_f \times \V_p \cap H^{\theta}(\Omega_p) \times \X_p :
\v_f\cdot\n_f + \v_p\cdot\n_p + \bxi_p\cdot\n_p = 0 \}.
$$
Consider its discrete analog
$$
\U_h = \left\{ (\v_{f,h},\v_{p,h},\bxi_{p,h}) \in \V_{f,h} \times \V_{p,h} \times \X_{p,h} 
: b_{\Gamma}\left(\v_{f,h},\v_{p,h},\bxi_{p,h};\mu_h\right)=0, \forall \mu_h \in \Lambda_h \right\}.
$$
We will construct an interpolation operator $I_h: \U \to \U_h$ as a triple
$$
I_h(\v_f,\v_p,\bxi_p) = \left(I_{f,h}\v_f , I_{p,h}\v_p,I_{s,h}\bxi_p\right),
$$
with the following properties:
\begin{align}
& b_{\Gamma}\left(I_{f,h}\v_f,I_{p,h}\v_p, I_{s,h}\bxi_p;\mu_h\right) =0, 
&& \forall  \mu_h \in \Lambda_h, \label{darcy-velocity-int1}\\
& b_f(I_{f,h}\v_f  - \v_f, w_{f,h}) =0, && \forall w_{f,h} \in W_{f,h}, 
\label{stokes-velocity-int2}\\ 
& b_p(I_{p,h}\v_p  - \v_p, w_{p,h}) =0 ,&& \forall w_{p,h} \in W_{p,h}. \label{darcy-velocity-int2} 
\end{align}
We let $I_{f,h} :=S_{f,h}$ and $I_{s,h} := S_{s,h}$. To construct $I_{p,h}$, we first consider 
an auxiliary problem:
\begin{align}
	\begin{cases}
	\nabla \cdot \nabla\phi = 0 &\textrm{ in } \O_p, \\
        \phi = 0 &\textrm{ on } \Gamma_p^D, \\
	\nabla \phi \cdot \n_p=0 &\textrm{ on } \Gamma_p^N, \\
	\nabla \phi \cdot \n_p=(\v_f-I_{f,h}\v_f)\cdot\n_f 
        + (\bxi_p-I_{s,h}\bxi_p)\cdot \n_p &\textrm{ on } \Gamma_{fp}.
	\end{cases} \label{eq:axil-problem}
\end{align}
Let $\z=\nabla \phi$ and define $\w=\z+\v_p$. By construction,
\begin{align}
\nabla \cdot \w & = \nabla \cdot \z + \nabla \cdot \v_p = \nabla \cdot \v_p 
&&\textrm{ in }\O_p, \label{div-w}\\
\w \cdot \n_p & = \z_p\cdot\n_p + \v_p\cdot\n_p = \v_f\cdot\n_f -I_{f,h}\v_f\cdot\n_f 
+ \bxi_p\cdot \n_p- I_{s,h}\bxi_p\cdot \n_p + \v_p \cdot \n_p \nonumber \\
& = -I_{f,h}\v_f\cdot\n_f - I_{s,h}\bxi_p\cdot \n_p &&\textrm{ on }\Gamma_{fp}. \label{propW}
\end{align}
We now let
\begin{equation}
I_{p,h}\v_p = \Pi_{p,h}\w. \label{operator}
\end{equation}
Next, we verify that the operator $I_h=(I_{f,h},I_{p,h},I_{s,h})$ satisfies 
\eqref{darcy-velocity-int1}--\eqref{darcy-velocity-int2}. Property \eqref{stokes-velocity-int2}
follows immediately from \eqref{stokes-proj}, while, using \eqref{mfe-operator} and 
\eqref{div-w}, property \eqref{darcy-velocity-int2} follows from
$$
 (\nabla \cdot I_{p,h}\v_p, w_{p,h})_{\O_p} =(\nabla \cdot \Pi_{p,h}\w, w_{p,h})_{\O_p}
= (\nabla \cdot \w, w_{p,h})_{\O_p} = (\nabla \cdot \v_p, w_{p,h})_{\O_p}, \quad 
\forall \, w_{p,h} \in W_{p,h}.
$$
Using \eqref{propW} and \eqref{mfe-operator-bis}, we have for all $\mu_h \in \Lambda_h$,
\begin{align*}
\langle I_{p,h}\v_p \cdot \n_p,\mu_h \rangle_{\Gamma_{fp}} = 
\langle \Pi_{p,h}\w \cdot \n_p,\mu_h \rangle_{\Gamma_{fp}} = 
\langle \w \cdot \n_p,\mu_h \rangle_{\Gamma_{fp}} =  
\langle -I_{f,h}\v_f\cdot\n_f - I_{s,h}\bxi_p\cdot \n_p,\mu_h \rangle_{\Gamma_{fp}},
\end{align*}
which implies \eqref{darcy-velocity-int1}.

The approximation properties of the components of $I_h$ are the following.
\begin{lemma}\label{l:interpolation}
	For all $(\v_f,\v_p,\bxi_p) \in \U\cap (H^{k_f+1}(\O_f),H^{k_p+1}(\O_p), H^{k_s+1}(\O_p))$, 
	\begin{align}
	&\|\v_f - I_{f,h}\v_f\|_{H^1(\O_f )} \leq Ch^{k_f}\|\v_f\|_{H^{k_f+1}(\O_f)}, \label{stokesVel}\\
	&\|\bxi_p - I_h^s\bxi_p\|_{H^m(\O_p )} \leq Ch^{k_s+1-m}\|\bxi_p\|_{H^{k_s+1}(\O_p)}, 
        \quad m=0,1, \label{displ-bound}\\
	&\|\v_p - I_{p,h}\v_p\|_{L^2(\O_p )} \leq C\left(h^{k_p+1}\|\v_p\|_{H^{k_p+1}(\O_p )}
        + h^{k_f}\|\v_f\|_{H^{k_f+1}(\O_f )} + h^{k_s}\|\bxi_p\|_{H^{k_s+1}(\O_p)}\right).
	\label{darcy-bound} 
	\end{align}
	\end{lemma}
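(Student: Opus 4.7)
The first two bounds follow at once from the definitions: since $I_{f,h}:=S_{f,h}$, \eqref{stokesVel} is exactly \eqref{eq:stokes-like-approx-prop}, and since $I_{s,h}:=S_{s,h}$, both $m=0$ and $m=1$ cases of \eqref{displ-bound} are contained in \eqref{eq:scott-zhang-approx-prop1}. Only the third estimate \eqref{darcy-bound} requires real work.

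For \eqref{darcy-bound}, the plan is to split the error using the decomposition $\w=\v_p+\z$ and the linearity of $\Pi_{p,h}$, which gives $I_{p,h}\v_p - \Pi_{p,h}\v_p = \Pi_{p,h}\z$ and hence, by the triangle inequality,
\begin{equation*}
\|\v_p-I_{p,h}\v_p\|_{L^2(\O_p)} \leq \|\v_p-\Pi_{p,h}\v_p\|_{L^2(\O_p)} + \|\Pi_{p,h}\z\|_{L^2(\O_p)}.
\end{equation*}
The first term contributes the $h^{k_p+1}\|\v_p\|_{H^{k_p+1}(\O_p)}$ piece of the claimed bound via \eqref{eq:mfe-approx-prop1}. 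The remaining task is therefore to show that $\|\Pi_{p,h}\z\|_{L^2(\O_p)}$ carries the two $\v_f$- and $\bxi_p$-terms of the estimate.

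To bound $\|\Pi_{p,h}\z\|_{L^2(\O_p)}$, I would invoke \eqref{eq:mfe-approx-prop2}. Since $\z=\nabla\phi$ with $\phi$ harmonic, $\nabla\cdot\z\equiv 0$, so for any $\theta>0$ on which $\Pi_{p,h}$ is well defined,
\begin{equation*}
\|\Pi_{p,h}\z\|_{L^2(\O_p)} \leq \|\Pi_{p,h}\z\|_{H(\mathrm{div};\O_p)} \leq C\,\|\nabla\phi\|_{H^{\theta}(\O_p)}.
\end{equation*}
Elliptic regularity for the mixed boundary value problem \eqref{eq:axil-problem} then lifts the Neumann datum $g := (\v_f-I_{f,h}\v_f)\cdot\n_f+(\bxi_p-I_{s,h}\bxi_p)\cdot\n_p$ on $\Gamma_{fp}$, producing $\|\nabla\phi\|_{H^{\theta}(\O_p)} \leq C\,\|g\|_{L^2(\Gamma_{fp})}$ for some admissible $\theta>0$; the standing assumption $\mathrm{dist}(\Gamma_p^D,\Gamma_{fp})\geq s>0$ is used here to separate the boundary types and to ensure that the only relevant boundary patch near $\Gamma_{fp}$ carries a single Neumann condition. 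Finally, the elementwise trace inequality $\|u\|_{L^2(\partial T)}^2 \leq C(h^{-1}\|u\|_{L^2(T)}^2 + h\|\nabla u\|_{L^2(T)}^2)$, summed over the elements touching $\Gamma_{fp}$ and combined with the $L^2$ and $H^1$ approximation bounds for $S_{f,h}$ and $S_{s,h}$, gives
\begin{equation*}
\|g\|_{L^2(\Gamma_{fp})} \leq C\bigl(h^{k_f+1/2}\|\v_f\|_{H^{k_f+1}(\O_f)} + h^{k_s+1/2}\|\bxi_p\|_{H^{k_s+1}(\O_p)}\bigr),
\end{equation*}
which is sharper than what is claimed and therefore yields \eqref{darcy-bound}.

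The main technical obstacle I anticipate is the elliptic-regularity step, namely producing an exponent $\theta>0$ which is \emph{simultaneously} small enough to be delivered by elliptic regularity for \eqref{eq:axil-problem} on the (possibly only polygonal or polyhedral) poroelastic domain $\O_p$, \emph{and} large enough that \eqref{eq:mfe-approx-prop2} legitimately applies to $\z$. This is the step at which the separation of Dirichlet and Neumann portions of $\partial\O_p$ near $\Gamma_{fp}$ is essential, and it implicitly imposes enough smoothness of $\Gamma_{fp}$ itself for the required fractional Neumann-to-Dirichlet lifting $L^2(\Gamma_{fp})\to H^\theta(\O_p)^d$ of the gradient to be bounded.
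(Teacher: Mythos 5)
Your first two bounds and the overall strategy for \eqref{darcy-bound} — triangle inequality with $I_{p,h}\v_p = \Pi_{p,h}\v_p + \Pi_{p,h}\z$, the $H({\rm div})$-stability \eqref{eq:mfe-approx-prop2} of $\Pi_{p,h}$ with $\nabla\cdot\z=0$, and elliptic regularity with $\theta=1/2$ for the auxiliary problem \eqref{eq:axil-problem} — all coincide with the paper's argument. The divergence is in the final trace step, and that is where your version has a gap. You bound $\|g\|_{L^2(\Gamma_{fp})}$ via the elementwise trace inequality $\|u\|_{L^2(\partial T)}^2 \le C(h^{-1}\|u\|_{L^2(T)}^2 + h\|\nabla u\|_{L^2(T)}^2)$ together with \emph{both} $L^2$ and $H^1$ approximation bounds for $S_{f,h}$ and $S_{s,h}$, producing the claimed $h^{k_f+1/2}$, $h^{k_s+1/2}$. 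For $S_{s,h}$ the $L^2$ bound is available in \eqref{eq:scott-zhang-approx-prop1}, but for the Stokes-type projection $S_{f,h}$ the paper only establishes the $H^1$ estimate \eqref{eq:stokes-like-approx-prop}; an $L^2$ estimate $\|\v_f - S_{f,h}\v_f\|_{L^2(\O_f)} \le Ch^{k_f+1}\|\v_f\|_{H^{k_f+1}}$ would require an Aubin--Nitsche duality argument under additional regularity hypotheses on $\O_f$ that the paper does not assume. If you drop the $L^2$ bound for $S_{f,h}$ and feed only the $H^1$ estimate into the elementwise trace inequality, you get the rate $h^{k_f - 1/2}$, which is \emph{worse} than the claimed $h^{k_f}$ — so your proposed trace step either over- or under-shoots depending on an ingredient that is not at hand.

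The paper avoids this entirely by using the \emph{continuous} trace inequality \eqref{Trace}, $\|v\|_{L^2(\Gamma_{fp})} \le C\|v\|_{H^1(\O_\star)}$, applied directly to $\v_f-I_{f,h}\v_f$ and $\bxi_p-I_{s,h}\bxi_p$. This needs only the $H^1$ error bounds \eqref{stokesVel} and \eqref{displ-bound} already proved, and gives exactly the stated $h^{k_f}$, $h^{k_s}$ rates with no extra regularity assumptions. You should replace your elementwise-trace step with this; everything else in your argument stands.
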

	\begin{proof}
The bounds \eqref{stokesVel} and \eqref{displ-bound} follow immediately from
\eqref{eq:stokes-like-approx-prop} and \eqref{eq:scott-zhang-approx-prop1}.
Next, using \eqref{operator}, we have
\begin{align*}
\|\v_p - I_{p,h}\v_p\|_{L^2(\O_p )} = \|\v_p - \Pi_{p,h} \v_p - \Pi_{p,h} \z\|_{L^2(\O_p )} 
\leq  \|\v_p - \Pi_{p,h} \v_p \|_{L^2(\O_p )} + \| \Pi_{p,h} \z\|_{L^2(\O_p )}.
\end{align*}
Elliptic regularity for \eqref{eq:axil-problem} \cite{Grisvard,Lions_Magenes_1972} implies
$$
\|\z\|_{H^{\theta}(\O_p)} \leq C \left( 
\|(\v_f-I_{f,h}\v_f)\cdot\n_f \|_{H^{\theta-1/2}(\Gam_{fp})} 
+ \|\left(\bxi_p-I_{s,h}\bxi_p\right)\cdot \n_p\|_{H^{\theta-1/2}(\Gam_{fp})}\right), \,\, 
0\leq \theta \leq 1/2.
$$
Since $\nabla \cdot \z = 0$ by construction, choosing $\theta =1/2$ 
and using \eqref{eq:mfe-approx-prop2}, \eqref{Trace}, \eqref{stokesVel} and 
\eqref{displ-bound}, we get
\begin{align*}
\| \Pi_{p,h} \z\|_{L^2(\O_p)} &\leq C\|\z\|_{H^{1/2}(\O_p )} \\
&\leq C \left(\|(\v_f-I_{f,h}\v_f)\cdot\n_f \|_{L^{2}(\Gam_{fp})} + \|\left(\bxi_p-I_{s,h}\bxi_p\right)\cdot \n_p\|_{L^{2}(\Gam_{fp})}\right) \\
&\leq C \left(\|\v_f-I_{f,h}\v_f \|_{H^{1}(\O_{f})} + \|\bxi_p-I_{s,h}\bxi_p\|_{H^{1}(\O_{p})}\right) \\
& \leq C\left( h^{k_f}\|\v_f\|_{H^{k_f+1}(\O_f )} + h^{k_s}\|\bxi_p\|_{H^{k_s+1}(\O_p)}\right).
\end{align*}
Finally, by \eqref{eq:mfe-approx-prop1},
\begin{align*}
\|\v_p - I_{p,h}\v_p\|_{L^2(\O_p )} & 
\leq  \|\v_p - \Pi_{p,h} \v_p \|_{L^2(\O_p )} + \| \Pi_{p,h} \z\|_{L^2(\O_p )} \\
&\leq C\left(h^{k_p+1}\|\v_p\|_{H^{k_p+1}(\O_p )}+ h^{k_f}\|\v_f\|_{H^{k_f+1}(\O_f )} + h^{k_s}\|\bxi_p\|_{H^{k_s+1}(\O_p)}\right).
\end{align*}
\end{proof}


\subsection{Error estimates}

In this section we derive a priori error estimate for the semi-discrete formulation
\eqref{h-weak-1}-\eqref{h-b-gamma}. We recall that, due to 
\eqref{h-cts-gamma}, $(\u_f,\u_p,\d_t\bbeta_{p}) \in \U$ and 
we can apply the interpolant 
$I_h(\u_f,\u_p,\d_t\bbeta_{p}) = (I_{f,h}\u_f , I_{p,h}\u_p,I_{s,h}\d_t\bbeta_{p}) \in \U_h$
for any $t \in (0,T]$. We introduce the errors for all variables
and split them into approximation and discretization errors:
\begin{align}
\e_f &:= \u_f-\u_{f,h} = (\u_f-I_{f,h}\u_f) + (I_{f,h}\u_f-\u_{f,h}) 
:= \bchi_f+\bphi_{f,h}, \nonumber \\
\e_p &:= \u_p-\u_{p,h} = (\u_p-I_{p,h}\u_p) + (I_{p,h}\u_p-\u_{p,h}) := \bchi_p+\bphi_{p,h}, 
\nonumber\\
\e_s &:= \bbeta_p-\bbeta_{p,h} = (\bbeta_p-I_{s,h}\bbeta_p) + (I_{s,h}\bbeta_p-\bbeta_{p,h}) 
:= \bchi_s+\bphi_{s,h}, \nonumber\\
e_{fp} &:= p_f-p_{f,h} = (p_f- Q_{f,h}p_f)+( Q_{f,h}p_f-p_{f,h}) :=\chi_{fp}+\phi_{fp,h}, \nonumber\\
e_{pp} &:= p_p-p_{p,h} = (p_p- Q_{p,h}p_p)+(Q_{p,h}p_p- p_{p,h}) :=\chi_{pp}+\phi_{pp,h}, \nonumber\\
e_{\lam} &:= \lam-\lam_h = (\lam-Q_{\lam,h}\lam) +(Q_{\lam,h}\lam-\lam_h) := \chi_{\lam}
+\phi_{\lam,h}. \label{errors-def}
\end{align}
Subtracting \eqref{h-weak-1}--\eqref{h-weak-2} from \eqref{h-cts-1}--\eqref{h-cts-2} and summing 
the two equations, we obtain the error equation
\begin{align}
&  a_{f}(\e_f,\v_{f,h})
+ a^d_{p}(\e_p,\v_{p,h})
+ a^e_{p}(\e_s,\bxi_{p,h}) 
+a_{BJS}(\e_f,\d_t\e_{s};\v_{f,h},\bxi_{p,h})+ b_f(\v_{f,h},e_{fp}) \nonumber \\ 
&\quad 
+ b_p(\v_{p,h},e_{pp})
+\alpha b_p(\bxi_{p,h},e_{pp})  
+ b_{\Gamma}(\v_{f,h},\v_{p,h},\bxi_{p,h};e_\lam) +\left(s_0\,\d_t e_{pp},w_{p,h}\right)  \nonumber \\
&\quad
- \alpha b_p(\d_t e_{s},w_{p,h})
- b_p(\e_{p},w_{p,h}) 
- b_f(\e_{f},w_{f,h})
  = 0,  
\end{align}
Setting $\displaystyle \v_{f,h}=\bphi_{f,h}, \v_{p,h}=\bphi_{p,h},\bxi_{p,h}=\d_t\bphi_{s,h}, w_{f,h}=\phi_{fp,h}$, and $w_{p,h}=\phi_{pp,h}$, we have
\begin{align}
&  a_{f}(\bchi_{f},\bphi_{f,h}) + a_{f}(\bphi_{f,h},\bphi_{f,h}) 
+ a^d_{p}(\bchi_{p},\bphi_{p,h}) + a^d_{p}(\bphi_{p,h},\bphi_{p,h}) + a^e_{p}\left(\bchi_{s},\d_t\bphi_{s,h}\right)+a^e_{p}\left(\bphi_{s,h},\d_t\bphi_{s,h}\right) \nonumber \\ 
&\quad    
+ a_{BJS}\left(\bchi_{f},\d_t\bchi_{s};\bphi_{f,h},\d_t\bphi_{s,h}\right) 
+ a_{BJS}\left(\bphi_{f,h},\d_t\bphi_{s,h};\bphi_{f,h},\d_t\bphi_{s,h}\right) 
+ b_f(\bphi_{f,h},\chi_{fp}) +  b_f(\bphi_{f,h},\phi_{fp,h}) 
\nonumber \\
&\quad  
+ b_p(\bphi_{p,h},\chi_{pp}) +  b_p(\bphi_{p,h},\phi_{pp,h})
+ \alpha b_p\left(\d_t\bphi_{s,h},\chi_{pp}\right) 
+ \alpha b_p\left(\d_t \bphi_{s,h},\phi_{pp,h}\right) \nonumber \\ 
&\quad
+ b_{\Gamma}\left(\bphi_{f,h},\bphi_{p,h},\d_t\bphi_{s,h};\chi_\lam\right)
+b_{\Gamma}\left(\bphi_{f,h},\bphi_{p,h},\d_t\bphi_{s,h};\phi_{\lam,h}\right) 
+\left( s_0\,\d_t \chi_{pp},\phi_{pp,h}\right) + \left( s_0\, \d_t \phi_{pp,h},\phi_{pp,h}\right)   
    \nonumber \\
&\quad - \alpha b_p\left(\d_t\bchi_s,\phi_{pp,h}\right) 
- \alpha b_p\left(\d_t\bphi_{s,h},\phi_{pp,h}\right) 
-b_p(\bchi_{p},\phi_{pp,h}) -  b_p(\bphi_{p,h},\phi_{pp,h}) \nonumber \\ 
&\quad - b_f(\bchi_{f},\phi_{fp,h}) - b_f(\bphi_{f,h},\phi_{fp,h})  = 0. \label{er-eq}
\end{align}
The following terms simplify, due
to the properties of projection operators
\eqref{darcy-pressure-int},\eqref{l-multuplier-int}, \eqref{stokes-velocity-int2}, 
and \eqref{darcy-velocity-int2}:
\begin{equation} \label{orthogonaity}
b_f(\bchi_{f},\phi_{fp,h}) = b_p(\bchi_{p},\phi_{pp,h}) = b_p(\bphi_{p,h},\chi_{pp}) = 0, \quad 
 \left(s_0 \, \d_t \chi_{pp},\phi_{pp,h}\right)=
 \langle\bphi_{p,h} \cdot\n_p,\chi_{\lambda}\rangle_{\Gam_{fp}}=0,
\end{equation}
where we also used that $\Lambda_h = \V_{p,h}\cdot \n_p|_{\Gamma_{fp}}$ for the last equality.
We also have
$$
b_{\Gamma}\left(\bphi_{f,h},\bphi_{p,h},\d_t\bphi_{s,h};\phi_{\lam,h}\right) = 0, \quad
b_{\Gamma}\left(\bphi_{f,h},\bphi_{p,h},\d_t\bphi_{s,h};\chi_\lam\right) 
= \left\langle\bphi_{f,h} \cdot \n_f + \d_t\bphi_{s,h} \cdot \n_p,\chi_{\lambda}
\right\rangle_{\Gamma_{fp}},
$$
where we have used \eqref{darcy-velocity-int1} and \eqref{h-b-gamma} for the 
first equality and the last equality in \eqref{orthogonaity} for the 
second equality. Using \eqref{int-parts}, we write
\begin{align*}
\left( s_0 \, \d_t \phi_{pp,h},\phi_{pp,h}\right)  = \frac12 s_0\, \d_t\|\phi_{pp,h}\|^2_{L^2(\Omega_p)}, \quad a^e_{p}\left(\bphi_{s,h},\d_t\bphi_{s,h}\right) = \frac12 \d_ta^e_{p}\left(\bphi_{s,h},\bphi_{s,h}\right).
\end{align*}
Rearranging terms and using the results above, the error equation 
\eqref{er-eq} becomes
\begin{align}
 & a_{f}(\bphi_{f,h},\bphi_{f,h})+ a^d_{p}(\bphi_{p,h},\bphi_{p,h})
+\frac12\d_t\left(a^e_{p}(\bphi_{s,h},\bphi_{s,h}) + s_0\|\phi_{pp,h}\|^2_{L^2(\Omega_p)} \right)   
+\left|\bphi_{f,h}-\d_t\bphi_{s,h} \right|^2_{a_{BJS}}  
\nonumber \\
&\quad  
=   a_{f}(\bchi_{f},\bphi_{f,h}) 
+  a^d_{p}(\bchi_{p},\bphi_{p,h}) +  a^e_{p}\left(\bchi_{s},\d_t\bphi_{s,h}\right) \nonumber \\
&\quad + \sum_{j=1}^{d-1}\left\langle\mu\alpha_{BJS}\sqrt{K_j^{-1}}(\bchi_f - \d_t\bchi_s)
\cdot\btau_{f,j},(\bphi_{f,h} -\d_t\bphi_{s,h}) \cdot\btau_{f,j} \right\rangle_{\Gam_{fp}} 
+ b_f(\bphi_{f,h},\chi_{fp})  \nonumber \\
&\quad  +  \alpha b_p(\d_t\bphi_{s,h},\chi_{pp}) + \alpha b_p(\d_t\bchi_{s},\phi_{pp,h}) 
+ \langle\bphi_{f,h} \cdot \n_f + \d_t\bphi_{s,h} \cdot \n_p,\chi_{\lambda}\rangle_{\Gamma_{fp}}.
\label{er-eq-1}  
\end{align}
We proceed with bounding the terms on the right-hand side in \eqref{er-eq-1}. 
Using the continuity of the bilinear forms \eqref{C1} and \eqref{C2} 
and inequalities \eqref{CS} and \eqref{Young}, we have
\begin{equation}\label{rhs1}
 a_{f}(\bchi_{f},\bphi_{f,h}) +  a^d_{p}(\bchi_{p},\bphi_{p,h}) 
\leq C\epsilon_1^{-1}\left( \|\bchi_{f}\|^2_{H^1(\Omega_f)} + \|\bchi_{p}\|^2_{L^2(\Omega_p)} \right)
+ \epsilon_1\left( \|\bphi_{f,h}\|^2_{H^1(\Omega_f)}  + \|\bphi_{p,h}\|^2_{L^2(\Omega_p)}\right).
\end{equation}
Similarly, using inequalities \eqref{CS}, \eqref{Trace} and \eqref{Young}, we obtain
\begin{align}
     & \sum_{j=1}^{d-1}\left\langle\mu\alpha_{BJS}\sqrt{K_j^{-1}}(\bchi_f - \d_t\bchi_s)
     \cdot\btau_{f,j},(\bphi_{f,h} - \d_t\bphi_{s,h}) \cdot\btau_{f,j} \right\rangle_{\Gam_{fp}}  
\nonumber \\
     &\qquad\qquad \leq \epsilon_1 \left|\bphi_{f,h}-\d_t\bphi_{s,h} \right|_{a_{BJS}}^2    
+ C\epsilon_1^{-1}\left(\|\bchi_{f}  \|^2_{H^1(\O_{f})} + \|\d_t\bchi_{s} \|^2_{H^1(\O_{p})} \right).
\label{rhs2}
\end{align}
Finally, using \eqref{CS},\eqref{Trace} and \eqref{Young}, we bound the rest of the terms 
that do not involve $\d_t\bphi_{s,h}$:
\begin{align}
    &b_f(\bphi_{f,h},\chi_{fp})+\alpha b_p\left(\d_t\bchi_{s},\phi_{pp,h}\right)
+ \langle\bphi_{f,h} \cdot \n_f ,\chi_{\lambda}\rangle_{\Gamma_{fp}}  \nonumber \\
 & \quad \leq   C\epsilon_1^{-1}\left(\|\chi_{fp}\|^2_{L^2(\Omega_f)} 
+ \left\|\nabla \cdot\d_t\bchi_{s}\right\|^2_{L^2(\Omega_p)}
+ \|\chi_{\lambda}\|^2_{L^2(\Gamma_{fp})}\right) \nonumber \\
& \qquad
+ \epsilon_1\left(\|\nabla \cdot\bphi_{f,h}\|^2_{L^2(\Omega_f)} +\|\phi_{pp,h}\|^2_{L^2(\Omega_p)} 
+ \|\bphi_{f,h}\cdot \n_f\|^2_{L^2(\Gamma_{fp})} \right) \nonumber \\
    & \quad \leq C\epsilon_1^{-1}\left(  \|\chi_{fp}\|^2_{L^2(\Omega_f)} 
+ \left\|\d_t\bchi_{s}\right\|^2_{H^1(\Omega_p)}  
+  \|\chi_{\lambda}\|^2_{L^2(\Gamma_{fp})}\right)  
+\epsilon_1 \left(\|\bphi_{f,h}\|^2_{H^1(\Omega_f)}
+ \|\phi_{pp,h}\|^2_{L^2(\Omega_p)}\right). \label{rhs3}
\end{align}
Combining \eqref{er-eq-1}--\eqref{rhs3}, integrating 
over $[0,t]$, where $0< t\leq T$, using the coercivity of the bilinear forms 
\eqref{C1}--\eqref{C3}, and taking $\epsilon_1$ small enough, we obtain
\begin{align}
 & 
\|\bphi_{s,h}(t)\|^2_{H^1(\Omega_f)}
+ s_0 \|\phi_{pp,h}(t)\|^2_{L^2(\Omega_p)}  
+ \|\bphi_{f,h}\|^2_{L^2(0,t;H^1(\Omega_f))} 
\nonumber \\
& \qquad\qquad\qquad
+ \|\bphi_{p,h}\|^2_{L^2(0,t;L^2(\Omega_p))}
+ \left|\bphi_{f,h}-\d_t\bphi_{s,h} \right|^2_{L^2(0,t;a_{BJS})} 
\nonumber \\
& \qquad\leq 
\epsilon_1 \|\phi_{pp,h}\|^2_{L^2(0,t;L^2(\Omega_p))} +
C\epsilon_1^{-1}\left(\|\d_t \bchi_{s}\|^2_{L^2(0,t;H^1(\Omega_p))} 
+ \|\chi_{fp}\|^2_{L^2(0,t;L^2(\Omega_f))} 
\right. \nonumber \\
& \qquad\qquad\ \left.
+ \|\bchi_{f}\|^2_{L^2(0,t;H^1(\Omega_f))} 
+ \|\bchi_{p}\|^2_{L^2(0,t;L^2(\Omega_p))} 
+ \|\chi_{\lambda}\|^2_{L^2(0,t;L^2(\Gamma_{fp}))} 
\right)
\nonumber \\
&\qquad\qquad\ 
+  C \int_{0}^{t} \left( a^e_{p}(\bchi_{s}, \d_t\bphi_{s,h})
+ \alpha b_p(\d_t\bphi_{s,h},\chi_{pp})
+ \langle \d_t\bphi_{s,h} \cdot \n_p,\chi_{\lambda}\rangle_{\Gamma_{fp}} \right) ds
\nonumber \\
&\qquad\qquad\ 
+ C\left(\|\bphi_{s,h}(0)\|^2_{H^1(\Omega_f)} + s_0\|\phi_{pp,h}(0)\|^2_{L^2(\Omega_p)}\right).
  \label{eq:error-ineq}
\end{align}
For the initial conditions, we set $p_{p,h}(0)=Q_{p,h} p_{p,0}$ and 
$\bbeta_{p,h}(0)=  I_{s,h} \bbeta_{p,0}$, implying
\begin{align}
\bphi_{s,h}(0)=0, \qquad \phi_{pp,h}(0)=0 \label{initial-error}
\end{align}
We next bound the terms on the right involving $\d_t\bphi_{s,h}$.
Using integration by parts in time, \eqref{CS}, \eqref{Young}, \eqref{C3} and 
\eqref{initial-error}, we obtain
\begin{align}
& \int_{0}^{t}a^e_{p}\left(\bchi_{s},\d_t\bphi_{s,h}\right)ds = a^e_{p}(\bchi_{s},\bphi_{s,h}) \big|_0^t - \int_{0}^{t}a^e_{p}\left(\d_t\bchi_{s},\bphi_{s,h}\right)ds \nonumber \\
& \quad\leq 
C\left(\epsilon_1^{-1} \|\bchi_{s}(t)\|^2_{H^1(\Omega_p)} + \left\|\d_t\bchi_{s}\right\|^2_{L^2(0,t;H^1(\Omega_p))} \right)  
+\epsilon_1\|\bphi_{s,h}(t)\|^2_{H^1(\Omega_p)} +\|\bphi_{s,h}\|^2_{L^2(0,t;H^1(\Omega_p))}.
\end{align}
Similarly, using \eqref{CS}, \eqref{Trace}, \eqref{Young} and \eqref{initial-error}, we have
\begin{align}
&  \int_{0}^{t}\left\langle\d_t\bphi_{s,h} \cdot \n_p,\chi_{\lambda}\right\rangle_{\Gamma_{fp}}ds +\int_{0}^{t}\alpha b_p\left(\d_t\bphi_{s,h},\chi_{pp}\right)ds
\nonumber \\
&\quad = \langle \bphi_{s,h} \cdot \n_p,\chi_{\lambda}\rangle_{\Gamma_{fp}}\big|^t_0 
-  \int_{0}^{t}\left\langle \bphi_{s,h} \cdot \n_p,\d_t \chi_{\lambda}\right\rangle_{\Gamma_{fp}}ds 
+\alpha b_p\left(\bphi_{s,h},\d_t\chi_{pp}\right) \big|_0^t -\int_{0}^{t}\alpha b_p\left(\bphi_{s,h},\d_t\chi_{pp}\right)ds \nonumber \\
&\quad \leq
 \epsilon_1\|\bphi_{s,h}(t) \cdot \n_p\|^2_{L^2(\Gamma_{fp})}
+ \|\bphi_{s,h} \cdot \n_p\|^2_{L^2(0,t;L^2(\Gamma_{fp}))}  
+ \epsilon_1\|\nabla \cdot \bphi_{s,h}(t)\|^2_{L^2(\O_{p})}
+ \|\nabla \cdot \bphi_{s,h}\|^2_{L^2(0,t;L^2(\O_{p}))} 
\nonumber \\
&\qquad +C \bigg(\epsilon_1^{-1}\|\chi_{\lambda}(t) \|^2_{L^2(\Gamma_{fp})}   
+ \left\|\d_t\chi_{\lambda} \right\|^2_{L^2(0,t;L^2(\Gamma_{fp}))}
+ \epsilon_1^{-1}\|\chi_{pp}(t)\|^2_{L^2(\O_{p})} 
+ \|\d_t\chi_{pp}\|^2_{L^2(0,t;L^2(\O_{p}))}\bigg)\nonumber \\
&\qquad \leq\epsilon_1 \|\bphi_{s,h}(t) \|^2_{H^1(\O_{p})} 
+ \|\bphi_{s,h} \|^2_{L^2(0,t;H^1(\O_{p}))} \nonumber \\
&\qquad 
+C \bigg(\epsilon_1^{-1}\|\chi_{\lambda}(t) \|^2_{L^2(\Gamma_{fp})}   
+ \left\|\d_t\chi_{\lambda} \right\|^2_{L^2(0,t;L^2(\Gamma_{fp}))}
+ \epsilon_1^{-1}\|\chi_{pp}(t)\|^2_{L^2(\O_{p})} 
+ \|\d_t\chi_{pp}\|^2_{L^2(0,t;L^2(\O_{p}))}\bigg). \label{dt-phi-bound}
\end{align}
Using \eqref{initial-error}--\eqref{dt-phi-bound} and taking $\epsilon_1$ small enough, 
we obtain from \eqref{eq:error-ineq},
\begin{align}
 &  
\|\bphi_{s,h}(t)\|^2_{H^1(\O_p)} + s_0 \|\phi_{pp,h}(t)\|^2_{L^2(\Omega_p)}  
+ \|\bphi_{f,h}\|^2_{L^2(0,t;H^1(\O_f))} 
\nonumber \\
 &\qquad\qquad\qquad 
+ \|\bphi_{p,h}\|^2_{L^2(0,t;L^2(\O_p))}
+ \left|\bphi_{f,h}-\d_t\bphi_{s,h} \right|^2_{L^2(0,t;a_{BJS})}  
\nonumber \\
 &\qquad
\leq \epsilon_1\|\phi_{pp,h}\|^2_{L^2(0,t;L^2(\Omega_p))} 
+ \|\bphi_{s,h} \|^2_{L^2(0,t;H^1(\O_{p}))}    \nonumber \\
 &\qquad\qquad  + C\epsilon_1^{-1}
\left( 
\|\chi_{fp}\|^2_{L^2(0,t;L^2(\Omega_f))}  + \|\bchi_{f}\|^2_{L^2(0,t;H^1(\Omega_f))} 
+ \|\bchi_{p}\|^2_{L^2(0,t;L^2(\Omega_p))} 
\right. \nonumber \\
&\qquad\qquad\qquad \left.
+ \|\chi_{\lambda}(t) \|^2_{L^2(\Gamma_{fp})} 
+ \|\chi_{pp}(t)\|^2_{L^2(\O_{p})}
+ \|\chi_{\lambda}\|^2_{L^2(0,t;L^2(\Gamma_{fp}))}
+ \|\bchi_{s}(t)\|^2_{H^1(\Omega_p)} 
\right)
\nonumber \\
&\qquad\qquad 
+ C \left(
\|\d_t \bchi_{s}\|^2_{L^2(0,t;H^1(\Omega_p))}  
+ \|\d_t\chi_{\lambda}\|^2_{L^2(0,t;L^2(\Gamma_{fp}))} 
+ \|\d_t\chi_{pp}\|^2_{L^2(0,t;L^2(\O_{p}))}
\right). \label{eq:error-ineq1}
\end{align}
Next, we use the inf-sup condition \eqref{inf-sup} with the choice 
$(w_h,\mu_h) =((\phi_{fp,h},\phi_{pp,h}), \phi_{\lam, h})$ 
and the error equation obtained by subtracting \eqref{h-weak-1} from \eqref{h-cts-1}:
\begin{align*}
    &\|((\phi_{fp,h},\phi_{pp,h}),\phi_{\lam,h})\|_{W\times\Lambda_h}   \nonumber\\
    & \quad\leq C \sup_{(\v_h,\bxi_{p,h}) \in \V_h\times \X_{p,h}}
\frac{b_f(\v_{f,h},\phi_{fp,h})+b_p(\v_{p,h},\phi_{pp,h})
+\alpha b_p(\bxi_{p,h}, \phi_{pp,h}) 
+ b_{\Gamma}(\v_{f,h},\v_{p,h},\bxi_{p,h};\phi_{\lam,h})}
{\|(\v_h,\bxi_{p,h})\|_{\V\times\X_p}} \nonumber\\
    &\quad =  \sup_{(\v_h,\bxi_{p,h}) \in \V_h\times \X_{p,h}}
\bigg(\frac{-a_{f}(\e_f,\v_{f,h})- a^d_{p}(\e_p,\v_{p,h})
- a^e_{p}(\e_s,\bxi_{p,h}) -a_{BJS}(\e_f,\d_t\e_{s};\v_{f,h},\bxi_{p,h})}
{\|(\v_h,\bxi_{p,h})\|_{\V\times\X_p}} \nonumber \\
    &\qquad\quad
+ \frac{-b_f(\v_{f,h},\chi_{fp})-b_p(\v_{p,h},\chi_{pp})
-\alpha b_p(\bxi_{p,h}, \chi_{pp}) 
- b_{\Gamma}(\v_{f,h},\v_{p,h},\bxi_{p,h};\chi_{\lam})} 
{\|(\v_h,\bxi_{p,h})\|_{\V\times\X_p}}\bigg).
\end{align*}
Due to \eqref{darcy-pressure-int} and \eqref{l-multuplier-int}, 
$b_p(\v_{p,h},\chi_{pp})=\langle \v_{p,h}\cdot \n_p,\chi_{\lam}\rangle_{\Gam_{fp}}=0$. 
Then, integrating over $[0,t]$ and using the continuity of the bilinear forms 
\eqref{C1}--\eqref{C3} and the trace inequality \eqref{Trace}, we get
\begin{align}
&\epsilon_2 (\|\phi_{fp,h}\|^2_{L^2(0,t;L^2(\Omega_f))} + 
\|\phi_{pp,h}\|^2_{L^2(0,t;L^2(\Omega_p))}
+ \|\phi_{\lam,h}\|^2_{L^2(0,t;L^2(\Gamma_{fp}))})
\nonumber\\
&\quad\leq C \epsilon_2\left( \|\bphi_{f,h}\|^2_{L^2(0,t;H^1(\O_f))}
+ \|\bphi_{p,h}\|^2_{L^2(0,t;L^2(\O_p))}
+ \|\bphi_{s,h}\|^2_{L^2(0,t;H^1(\O_p))}
+\left|\bphi_{f,h}-\d_t\bphi_{s,h} \right|^2_{L^2(0,t;a_{BJS})}  
\right.\nonumber \\
&\qquad\qquad\qquad 
+\|\bchi_{f}\|^2_{L^2(0,t;H^1(\O_f))} 
+ \|\bchi_{p}\|^2_{L^2(0,t;L^2(\O_p))} 
+ \left\|\d_t\bchi_{s}  \right\|^2_{L^2(0,t;H^1(\O_{p}))} 
\nonumber \\
&\qquad\qquad\qquad \left.
+ \|\chi_{fp}\|^2_{L^2(0,t;L^2(\O_f))} + \|\chi_{pp}\|^2_{L^2(0,t;L^2(\O_p))}
+ \|\chi_{\lam}\|_{L^2(0,t;L^2(\Gamma_{fp}))}
\right).
\label{eq-error-ineq-inf-sup}
\end{align}
Adding \eqref{eq:error-ineq1} and \eqref{eq-error-ineq-inf-sup} and taking $\epsilon_2$ 
small enough, and then $\epsilon_1$ small enough, gives
\begin{align}
 &  
 \|\bphi_{s,h}(t)\|^2_{H^1(\O_p)} + s_0 \|\phi_{pp,h}(t)\|^2_{L^2(\Omega_p)}  
+ \|\bphi_{f,h}\|^2_{L^2(0,t;H^1(\O_f))} + \|\bphi_{p,h}\|^2_{L^2(0,t;L^2(\O_p))}
\nonumber \\
 &\qquad  + \left|\bphi_{f,h}-\d_t\bphi_{s,h} \right|^2_{L^2(0,t;a_{BJS})} +  \|\phi_{fp,h}\|^2_{L^2(0,t;L^2(\O_f))}+\|\phi_{pp,h}\|^2_{L^2(0,t;L^2(\O_p))} +\|\phi_{\lam,h} \|^2_{L^2(0,t;\Lambda_h)}\nonumber \\
 &\quad  \leq C\bigg(  \|\bphi_{s,h} \|^2_{L^2(0,t;H^1(\O_{p}))} + \left\|\d_t \bchi_{s}\right\|^2_{L^2(0,t;H^1(\Omega_p))} + \|\chi_{fp}\|^2_{L^2(0,t;L^2(\Omega_f))}+ \|\bchi_{f}\|^2_{L^2(0,t;H^1(\Omega_f))}  \nonumber \\
&\qquad   + \|\bchi_{p}\|^2_{L^2(0,t;L^2(\Omega_p))} + \|\chi_{\lambda}(t) \|^2_{L^2(\Gamma_{fp})} + \|\chi_{pp}(t)\|^2_{L^2(\O_{p})} + \|\chi_{\lambda}\|^2_{L^2(0,t;L^2(\Gamma_{fp}))} + \left\|\d_t\chi_{\lambda} \right\|^2_{L^2(0,t;L^2(\Gamma_{fp}))} \nonumber \\
&\qquad   + \|d_t\chi_{pp}\|^2_{L^2(0,t;L^2(\O_{p}))}
+ \|\d_t\bchi_{s}(t)\|^2_{H^1(\Omega_p)} \bigg).
\label{eq:error-ineq2}
\end{align}  
Applying Gronwall's inequality \eqref{Gronwall} and using the triangle
inequality and the approximation properties 
\eqref{stokesPresProj}--\eqref{LMProj}, \eqref{lam-proj-prop} and
\eqref{stokesVel}--\eqref{darcy-bound}, results in the following theorem.
\begin{theorem}\label{thm:error-semi-discrete}
Assuming sufficient smoothness for the solution of \eqref{h-cts-1}--\eqref{h-cts-gamma},
then the solution of the semi-discrete problem \eqref{h-weak-1}--\eqref{h-b-gamma}
with $p_{p,h}(0)=Q_{p,h} p_{p,0}$ and $\bbeta_{p,h}(0)=  I_{s,h} \bbeta_{p,0}$
satisfies
\begin{align}
&
 \|\bbeta_{p} -\bbeta_{p,h}\|_{L^{\infty}(0,T;H^1(\O_p))} 
+ \sqrt{s_0} \|p_p -p_{p,h}\|_{L^{\infty}(0,T;L^2(\Omega_p))} \nonumber \\
& \qquad
 +  \|\u_f - \u_{f,h}\|_{L^2(0,T;H^1(\O_f))} + \|\u_p-\u_{p,h}\|_{L^2(0,T;L^2(\O_p))}
 + \left|(\u_{f}-\d_t\bbeta_{p})-(\u_{f,h}-\d_t\bbeta_{p,h}) \right|_{L^2(0,T;a_{BJS})}
\nonumber \\
& \qquad
+\|p_f-p_{f,h}\|_{L^2(0,T;L^2(\O_f))}+\|p_p-p_{p,h}\|_{L^2(0,T;L^2(\O_p))} 
+\|\lam-\lam_h \|_{L^2(0,T;\Lambda_h)} \nonumber \\
& \quad
   \leq C\sqrt{\exp(T)}\bigg(  h^{k_f}\| \u_{f}\|_{L^{2}(0,T;H^{k_f+1}(\Omega_f))} + h^{s_f+1}\|p_f\|_{L^2(0,T;H^{s_f+1}(\Omega_f))}  \nonumber \\
& \qquad
 + h^{k_p+1}\left(\|\u_{p}\|_{L^2(0,T;H^{k_p+1}(\Omega_p))} 
+ \|\lambda\|_{L^2(0,T;H^{k_p+1}(\Gamma_{fp}))}
\right. \nonumber\\
& \qquad\qquad\qquad \left.
+ \|\lambda\|_{L^{\infty}(0,T;H^{k_p+1}(\Gamma_{fp}))}+ \left\|\d_t\lambda \right\|_{L^2(0,T;H^{k_p+1}(\Gamma_{fp}))} \right)  \nonumber \\
& \qquad
 + h^{s_p+1}\left(\|p_p\|_{L^{\infty}(0,T;H^{s_p+1}(\O_{p}))}+ \|p_p\|_{L^2(0,T;H^{s_p+1}(\O_{p}))}
+ \|\d_tp_p\|_{L^2(0,T;H^{s_p+1}(\O_{p}))} \right)  
\nonumber \\
& \qquad
 +h^{k_s}\left(\|\bbeta_{p}\|_{L^{\infty}(0,T;H^{k_s+1}(\Omega_p))}  +\left\|\bbeta_{p}\right\|_{L^2(0,T;H^{k_s+1}(\Omega_p))}+\left\|\d_t\bbeta_{p}\right\|_{L^2(0,T;H^{k_s+1}(\Omega_p))} \right)\bigg). 
\label{error-estimate}
\end{align}
\label{error-semi-discrete}
\end{theorem}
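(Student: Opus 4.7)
The plan is to treat the theorem as the final collection step of the work already assembled just above it: inequality \eqref{eq:error-ineq2} already bounds the discrete error components $\bphi_{f,h}, \bphi_{p,h}, \bphi_{s,h}, \phi_{fp,h}, \phi_{pp,h}, \phi_{\lam,h}$ in the norms on the left-hand side of \eqref{error-estimate} by a combination of approximation-error terms $\chi_\cdot$ and the single discrete term $\|\bphi_{s,h}\|^2_{L^2(0,t;H^1(\O_p))}$. The first step is to absorb this remaining discrete term via Gronwall's inequality \eqref{Gronwall}: writing \eqref{eq:error-ineq2} as $u(t) \leq f(t) + C\int_0^t u(s)\,ds$, where $u(t)$ dominates $\|\bphi_{s,h}(t)\|^2_{H^1(\O_p)}$ and $f(t)$ collects the $\chi_\cdot$-terms (as well as the $s_0\|\phi_{pp,h}(t)\|^2_{L^2(\O_p)}$ contribution on the left), Gronwall produces a bound of the form $u(t) \leq f(t) + C\int_0^t f(\tau)\exp(C(t-\tau))\,d\tau$, which after taking $t=T$ yields the factor $\sqrt{\exp(T)}$ appearing in \eqref{error-estimate}.

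The second step is to convert bounds on the discrete errors $\bphi_{\cdot,h}$ into bounds on the full errors $\e_\cdot$ via the splitting introduced in \eqref{errors-def} and the triangle inequality, e.g.\ $\|\u_f - \u_{f,h}\|_{H^1(\O_f)} \leq \|\bchi_f\|_{H^1(\O_f)} + \|\bphi_{f,h}\|_{H^1(\O_f)}$, and analogously for $\u_p - \u_{p,h}$, $\bbeta_p - \bbeta_{p,h}$, the pressures, the Lagrange multiplier, and the BJS seminorm $|(\u_f - \d_t \bbeta_p) - (\u_{f,h} - \d_t \bbeta_{p,h})|_{a_{BJS}}$. The latter splits into $|\bchi_f - \d_t\bchi_s|_{a_{BJS}} + |\bphi_{f,h} - \d_t \bphi_{s,h}|_{a_{BJS}}$, with the first term controlled through the trace inequality \eqref{Trace} by $\|\bchi_f\|_{H^1(\O_f)} + \|\d_t\bchi_s\|_{H^1(\O_p)}$.

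The third step is to substitute the approximation bounds \eqref{stokesPresProj}--\eqref{LMProj}, \eqref{lam-proj-prop}, and \eqref{stokesVel}--\eqref{darcy-bound} from Lemma~\ref{l:interpolation} into the resulting right-hand side. The rates $h^{k_f}$, $h^{s_f+1}$, $h^{k_p+1}$, $h^{s_p+1}$, $h^{k_s}$ then fall out directly from the corresponding norms of the exact solution; the $L^\infty(0,T;\cdot)$ data norms on the right of \eqref{error-estimate} arise from the terms $\|\chi_\lambda(t)\|$, $\|\chi_{pp}(t)\|$, and $\|\bchi_s(t)\|$ evaluated at the running time $t$ (bounded uniformly by their $L^\infty$-in-time counterparts), while the $L^2(0,T;\cdot)$ and time-derivative data norms arise from the $L^2(0,t;\cdot)$ and $\d_t\chi_\cdot$ terms on the right of \eqref{eq:error-ineq2}. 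The fact that $\|\bchi_p\|_{L^2(\O_p)}$ in \eqref{darcy-bound} carries the suboptimal rates $h^{k_f}\|\u_f\|_{H^{k_f+1}(\O_f)}$ and $h^{k_s}\|\bbeta_p\|_{H^{k_s+1}(\O_p)}$ (inherited from the auxiliary lifting used to build $I_{p,h}$) explains why the low-order rate $h^{k_s}$ appears in the final estimate.

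There is no serious obstacle beyond bookkeeping: the coupled Gronwall argument only requires that the single discrete quantity left on the right-hand side of \eqref{eq:error-ineq2} be one whose $L^\infty$-in-time norm already appears on the left, which is the case for $\|\bphi_{s,h}\|_{H^1(\O_p)}$. The one point that requires a little care is that the initial-data terms are eliminated thanks to the choices $p_{p,h}(0) = Q_{p,h}p_{p,0}$ and $\bbeta_{p,h}(0) = I_{s,h}\bbeta_{p,0}$, which give \eqref{initial-error} and so make the $\bphi_{s,h}(0)$ and $\phi_{pp,h}(0)$ contributions vanish in \eqref{eq:error-ineq}, consistent with the fact that no initial-data term survives on the right-hand side of \eqref{error-estimate}.
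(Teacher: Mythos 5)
Your proposal is correct and follows the same route as the paper: apply Gronwall's inequality \eqref{Gronwall} to \eqref{eq:error-ineq2} to absorb the $\|\bphi_{s,h}\|^2_{L^2(0,t;H^1(\Omega_p))}$ term and produce the $\sqrt{\exp(T)}$ factor, then use the splitting \eqref{errors-def}, the triangle inequality, and the approximation bounds \eqref{stokesPresProj}--\eqref{LMProj}, \eqref{lam-proj-prop}, \eqref{stokesVel}--\eqref{darcy-bound} to pass from the $\bphi$-errors to the full errors, with the choice of discrete initial data eliminating the initial-error terms via \eqref{initial-error}. One small inaccuracy in the commentary: the rate $h^{k_s}$ would appear in the final estimate regardless, since it is the optimal $H^1$-approximation rate for $\bchi_s$ (and hence for the displacement error); the pollution of $\|\bchi_p\|_{L^2(\Omega_p)}$ by $h^{k_f}$ and $h^{k_s}$ through the lifting in $I_{p,h}$ is a real feature of the construction, but it is not what "explains" the presence of $h^{k_s}$ in the theorem.
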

\section{Fully discrete formulation}
For the time discretization we employ the backward Euler method. Let $\tau$ be the time 
step, $T = N \tau$, and let $t_n = n \tau$, $0 \le n \le N$. Let 
$\dt u^n := \tau^{-1}(u^n-u^{n-1})$ be the first order
(backward) discrete time derivative, where $u^n:= u(t_n)$.
Then the fully discrete model reads: given $p^0_{p,h} = p_{p,h}(0)$ and 
$\bbeta^0_{p,h} = \bbeta_{p,h}(0)$, find $\u^n_{f,h} \in \V_{f,h}$, $p^n_{f,h} \in W_{f,h}$,
$\u^n_{p,h} \in \V_{p,h}$, $p^n_{p,h} \in W_{p,h}$, $\bbeta^n_{p,h} \in \X_{p,h}$, 
and $\lambda^n_h \in \Lambda_h$, $1\leq n\leq N$,
such that for all $\v_{f,h} \in \V_{f,h}$, $w_{f,h} \in W_{f,h}$,
$\v_{p,h} \in \V_{p,h}$, $w_{p,h} \in W_{p,h}$, $\bxi_{p,h} \in \X_{p,h}$, 
and $\mu_h \in \Lambda_h$,
\begin{align}
& a_{f}(\u^n_{f,h},\v_{f,h}) + a^d_{p}(\u^n_{p,h},\v_{p,h}) 
+ a^e_{p}(\bbeta^n_{p,h},\bxi_{p,h}) + a_{BJS}(\u^n_{f,h},\dt\bbeta^n_{p,h};\v_{f,h},\bxi_{p,h})
+ b_f(\v_{f,h},p^n_{f,h})  \nonumber \\
&\qquad\quad
+ b_p(\v_{p,h},p^n_{p,h}) + \alpha b_p(\bxi_{p,h},p^n_{p,h})  
	+ b_{\Gamma}(\v_{f,h},\v_{p,h},\bxi_{p,h};\lam^n_h) = (\f^n_{f},\v_{f,h})_{\O_f} 
	+ (\f^n_{p},\bxi_{p,h})_{\O_p}, \label{disc-h-weak-1} \\
	&	( s_0 \dt p^n_{p,h},w_{p,h})_{\O_p} 
	- \alpha b_p(\dt\bbeta^n_{p,h},w_{p,h}) 
	- b_p(\u^n_{p,h},w_{p,h}) - b_f(\u^n_{f,h},w_{f,h}) \nonumber
	\\
	&\qquad\quad
	= (q^n_{f},w_{f,h})_{\O_f} + (q^n_{p},w_{p,h})_{\O_p}, \label{disc-h-weak-2} \\
	&
	b_{\Gamma}(\u^n_{f,h},\u^n_{p,h},\dt\bbeta^n_{p,h};\mu_h) = 0. \label{disc-h-b-gamma}
\end{align}
We introduce the discrete-in-time norms
\begin{align*}
\|\phi\|^2_{l^2(0,T;X)} :=\left( \tau \sum_{n=1}^N \|\phi^n\|^2_X\right)^{1/2}, 
\qquad \|\phi\|^2_{l^{\infty}(0,T;X)} := \max_{0\leq n\leq N}\|\phi^n\|_X.
\end{align*}
Next, we state the main results for the formulation
\eqref{disc-h-weak-1}-\eqref{disc-h-b-gamma}. The proofs follow the framework in the 
semi-discrete case. Details can be found in the Appendix.
\begin{theorem}\label{stability-fully-discrete}
The solution of fully discrete problem \eqref{disc-h-weak-1}-\eqref{disc-h-b-gamma} 
satisfies
	\begin{align*}
& \sqrt{s_0} \|p_{p,h}\|_{l^{\infty}(0,T;L^2(\O_p))} + \|\bbeta_{p,h}\|_{l^{\infty}(0,T;H^1(\O_p))}+\|\u_{f,h}\|_{l^2(0,T;H^1(\O_{f}))} +\|\u_{p,h}\|_{l^2(0,T;L^2(\O_{p}))}  \\
& \qquad +|\u_{f,h} - \dt \bbeta_{p,h}|_{l^2(0,T;a_{BJS})} +\|p_{p,h}\|_{l^2(0,T;L^2(\O_p))} +\|p_{f,h}\|_{l^2(0,T;L^2(\O_f))}+\|\lam_{h}\|_{l^2(0,T;\Lambda_h)}\\
& \qquad
	+ \tau \left( \sqrt{s_0}\|\dt p_{p,h}\|_{l^2(0,T;L^2(\O_p))} + \|\dt \bbeta_{p,h}\|_{l^2(0,T;H^1(\O_p))}\right) \\
& \quad
	\leq C\sqrt{\exp(T)}\Big(\sqrt{s_0} \|p^0_{p,h}\|_{L^2(\O_p)}  + \|\bbeta^0_{p,h}\|_{H^1(\O_p)} +  \|\f_{p}\|_{l^{\infty}(0,T;L^2(\O_p))} + \|\d_t\f_p\|_{L^2(0,T;L^2(\O_p))} \\
& \qquad
	+   \|\f_{f}\|_{l^2(0,T;L^2(\O_f))} + \|q_{f}\|_{l^2(0,T;L^2(\O_f))} + \|q_{p}\|_{l^2(0,T;L^2(\O_p))}+\|\f_{p}\|_{l^2(0,T;L^2(\O_p))}  \Big).
	\end{align*}
\end{theorem}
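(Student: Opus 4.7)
The plan is to mirror the semi-discrete stability analysis carried out in Section 4, with the time integrals replaced by summation in time and the time derivative $\partial_t$ replaced by the backward difference $\dt$, and with careful bookkeeping of the extra $\tau$-factors that the backward Euler scheme produces. Specifically, I would test \eqref{disc-h-weak-1}--\eqref{disc-h-b-gamma} with $(\v_{f,h},w_{f,h},\v_{p,h},w_{p,h},\bxi_{p,h},\mu_h) = (\u^n_{f,h},p^n_{f,h},\u^n_{p,h},p^n_{p,h},\dt\bbeta^n_{p,h},\lambda^n_h)$, add the three equations, and observe that the coupling terms involving $b_f$, $b_p$, $b_\Gamma$ cancel exactly as they did in \eqref{energy-eq}. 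This produces a per-step energy identity of the form
\begin{align*}
(s_0\dt p^n_{p,h},p^n_{p,h})_{\O_p} + a^e_p(\bbeta^n_{p,h},\dt\bbeta^n_{p,h}) + a_f(\u^n_{f,h},\u^n_{f,h}) + a^d_p(\u^n_{p,h},\u^n_{p,h}) + |\u^n_{f,h} - \dt\bbeta^n_{p,h}|^2_{a_{BJS}} = \mathcal{F}^n,
\end{align*}
where $\mathcal{F}^n$ collects the right-hand side terms.

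Next, I would apply the discrete analogue of \eqref{int-parts}, namely
$\tau(\dt \phi^n,\phi^n) = \tfrac12(\|\phi^n\|^2 - \|\phi^{n-1}\|^2) + \tfrac{\tau^2}{2}\|\dt\phi^n\|^2$,
to both $(s_0\dt p^n_{p,h},p^n_{p,h})$ and $a_p^e(\bbeta^n_{p,h},\dt\bbeta^n_{p,h})$. Multiplying by $\tau$, summing from $n=1$ to $m$ for arbitrary $m\le N$, and telescoping, I would obtain a discrete energy inequality controlling $\sqrt{s_0}\|p^m_{p,h}\|_{L^2(\O_p)}$, $\|\bbeta^m_{p,h}\|_{H^1(\O_p)}$ and the sums $\tau\sum_{n=1}^m\|\u^n_{f,h}\|_{H^1(\O_f)}^2$, $\tau\sum\|\u^n_{p,h}\|_{L^2(\O_p)}^2$, $\tau\sum|\u^n_{f,h}-\dt\bbeta^n_{p,h}|^2_{a_{BJS}}$, together with the leftover $\tau^2$ terms $\tau^2\sum\sqrt{s_0}\|\dt p^n_{p,h}\|_{L^2(\O_p)}^2$ and $\tau^2\sum\|\dt\bbeta^n_{p,h}\|_{H^1(\O_p)}^2$ that account for the last contribution in the statement.

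For the forcing term $\tau\sum(\f^n_p,\dt\bbeta^n_{p,h})_{\O_p}$, which is the tricky one, I would apply discrete summation by parts:
\begin{align*}
\tau\sum_{n=1}^m (\f^n_p,\dt\bbeta^n_{p,h})_{\O_p} = (\f^m_p,\bbeta^m_{p,h})_{\O_p} - (\f^0_p,\bbeta^0_{p,h})_{\O_p} - \tau\sum_{n=1}^{m}(\dt\f^n_p,\bbeta^{n-1}_{p,h})_{\O_p},
\end{align*}
so that the time derivative falls on $\f_p$, producing the $\|\dt\f_p\|_{L^2(0,T;L^2(\O_p))}$ term. The remaining forcing contributions $(\f^n_f,\u^n_{f,h})$, $(q^n_f,p^n_{f,h})$, $(q^n_p,p^n_{p,h})$ are bounded directly with Cauchy--Schwarz and Young's inequality. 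To control the pressures and Lagrange multiplier I would reuse the inf-sup condition \eqref{inf-sup} on the first fully discrete equation, exactly as in the derivation of \eqref{p-bound}, producing $\tau\sum\|p^n_{f,h}\|^2 + \tau\sum\|p^n_{p,h}\|^2 + \tau\sum\|\lambda^n_h\|^2_{\Lambda_h}$ controlled by the other quantities.

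The final step is to absorb the small-$\epsilon$ terms into the left-hand side, move the $\tau\sum\|\bbeta^n_{p,h}\|_{H^1(\O_p)}^2$ term (coming from the $\int_0^t\|\bbeta_{p,h}\|^2$ estimate applied discretely) to the right, and apply the discrete Gronwall inequality to eliminate it, producing the $\sqrt{\exp(T)}$ factor. The main obstacle I anticipate is the careful treatment of the $(\f_p,\dt\bbeta_{p,h})$ term via discrete summation by parts, since one must produce a boundary contribution at $t_m$ that is absorbed into the $\|\bbeta^m_{p,h}\|_{H^1(\O_p)}^2$ already present on the left-hand side, together with correctly tracking the extra $\tau$ factors so that the additional $\tau\|\dt p_{p,h}\|$ and $\tau\|\dt\bbeta_{p,h}\|$ contributions appear with the right scaling. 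Once these bookkeeping details are handled, the remainder of the argument is a faithful transcription of the semi-discrete proof.
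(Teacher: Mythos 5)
Your proposal follows essentially the same path as the paper's proof: test with $(\u^n_{f,h},p^n_{f,h},\u^n_{p,h},p^n_{p,h},\dt\bbeta^n_{p,h},\lambda^n_h)$, use the discrete integration-by-parts identity \eqref{sum-parts} to extract the telescoping terms plus the extra $\tau^2\|\dt\cdot\|^2$ contributions, treat $\tau\sum(\f^n_p,\dt\bbeta^n_{p,h})$ by discrete summation by parts, control the pressures and Lagrange multiplier via the inf-sup condition \eqref{inf-sup}, and close with the discrete Gronwall lemma. Your outline is correct and captures all the key steps, including the discrete-summation-by-parts bookkeeping that the paper performs in \eqref{sum-by-parts}.
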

\begin{theorem}\label{error-fully-discrete}
Assuming sufficient smoothness for the solution of \eqref{h-cts-1}--\eqref{h-cts-gamma},
then the solution of the fully discrete problem 
\eqref{disc-h-weak-1}-\eqref{disc-h-b-gamma} satisfies
	\begin{align*}
& \sqrt{s_0} \|p_p-p_{p,h}\|_{l^{\infty}(0,T;L^2(\O_p))} + \|\bbeta-\bbeta_{p,h}\|_{l^{\infty}(0,T;H^1(\O_p))}\\
& \qquad
	+ \sqrt{\tau}\Big(  \sqrt{s_0} \|d_{\tau}(p_p- p_{p,h})\|_{l^{2}(0,T;L^2(\O_p))} 
	+\|d_{\tau}(\bbeta_p-\bbeta_{p,h})\|_{l^2(0,T;H^1(\O_p))}	 \Big) \\
& \qquad
	+\Big( \|\u_f-\u_{f,h}\|_{l^2(0,T;H^1(\O_f))} + \|\u_p-\u_{p,h}\|_{l^2(0,T;L^2(\O_p))} 
        +  |\u_f-\dt \bbeta_p -(\u_{f,h} - \dt\bbeta_{p,h}) |_{l^2(0,T;a_{BJS})}\\ 
& \qquad
+\|p_f-p_{f,h}\|_{l^2(0,T;L^2(\O_f))} +\|p_p-p_{p,h}\|_{l^2(0,T;L^2(\O_p))} +\|\lam-\lam_{h}\|_{l^2(0,T;\Lambda_h)}\Big)  \\
& \quad
	\leq C\sqrt{\exp( T)}\Big( h^{k_f}\|\u_{f}\|_{l^2(0,T;H^{k_f+1}(\O_f))} +h^{s_f+1}\|p_{f}\|_{l^2(0,T;H^{s_f+1}(\O_f))} \\
& \qquad
	 + h^{k_p+1}\left(\|\u_{p}\|_{l^2(0,T;H^{k_p+1}(\O_p))} + \|\lam\|_{l^2(0,T;H^{k_p+1}(\Gam_{fp}))}+\|\lam\|_{l^{\infty}(0,T;H^{k_p+1}(\Gam_{fp}))}+\|\d_t \lam\|_{L^2(0,T;H^{k_p+1}(\O_p))}\right)\\
& \qquad
	+h^{s_p+1}\left( \|p_{p}\|^2_{l^2(0,T;H^{k_p+1}(\O_p))}+\|p_{p}\|_{l^{\infty}(0,T;H^{s_p+1}(\O_p))}+\|\d_t p_{p}\|_{L^2(0,T;H^{s_p+1}(\O_p))} \right)\\
& \qquad
	+h^{k_s}\left( \|\bbeta_{p}\|_{l^2(0,T;H^{k_s+1}(\O_p))} +\|\bbeta_{p}\|_{l^{\infty}(0,T;H^{k_s+1}(\O_p))}+\|\d_t\bbeta_{p}\|_{L^2(0,T;H^{k_s+1}(\O_p))}\right)\\
& \qquad
	 +\tau\left(\sqrt{s_0}\|\d_{tt}p_p\|_{L^{2}(0,T;L^{2}(\O_p))}+\|\d_{tt}\bbeta_p\|_{L^{2}(0,T;H^{1}(\O_p))}  \right) \Big).
	\end{align*}
\end{theorem}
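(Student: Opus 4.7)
The plan is to mirror the semi-discrete error analysis (Theorem~\ref{error-semi-discrete}), with additional care for the time-truncation terms arising from the backward Euler step. First I would split the errors at each time level $t_n$ into approximation and discretization parts exactly as in \eqref{errors-def}: $\e_f^n = \bchi_f^n + \bphi_{f,h}^n$, $\bphi_{s,h}^n = I_{s,h}\bbeta_p(t_n) - \bbeta_{p,h}^n$, and so on. Subtracting \eqref{disc-h-weak-1}--\eqref{disc-h-b-gamma} from \eqref{h-cts-1}--\eqref{h-cts-gamma} evaluated at $t=t_n$, the only new ingredients relative to the semi-discrete derivation are the time-truncation residuals
\[
r_n^p := \d_t p_p(t_n) - \dt p_p^n, \qquad r_n^s := \d_t \bbeta_p(t_n) - \dt \bbeta_p^n,
\]
which by Taylor's theorem satisfy $\|r_n^u\|^2 \le \tau \int_{t_{n-1}}^{t_n}\|\d_{tt} u\|^2\, ds$. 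After multiplying by $\tau$ and summing over $n$, they contribute the $O(\tau^2)$ residuals involving $\|\d_{tt}p_p\|_{L^2(0,T;L^2(\O_p))}$ and $\|\d_{tt}\bbeta_p\|_{L^2(0,T;H^1(\O_p))}$ on the right-hand side of the bound.

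I would then test with the discrete analogues of the semi-discrete test functions, $(\bphi_{f,h}^n,\phi_{fp,h}^n,\bphi_{p,h}^n,\phi_{pp,h}^n,\dt\bphi_{s,h}^n,\phi_{\lam,h}^n)$, so that the projection/interpolation orthogonalities \eqref{orthogonaity} and the weak-continuity identity for $I_h$ reduce the error equation to the discrete analogue of \eqref{er-eq-1}. The identity \eqref{int-parts} is replaced by
\[
(s_0 \dt \phi_{pp,h}^n,\phi_{pp,h}^n)_{\O_p} = \tfrac{1}{2}\dt\bigl(s_0\|\phi_{pp,h}^n\|_{L^2(\O_p)}^2\bigr) + \tfrac{\tau}{2}\,s_0\|\dt \phi_{pp,h}^n\|_{L^2(\O_p)}^2,
\]
and analogously for $a_p^e(\bphi_{s,h}^n,\dt\bphi_{s,h}^n)$. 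Multiplying by $\tau$ and summing over $n=1,\dots,N'$ for any $N'\le N$ telescopes the first group to the $l^\infty$ norms on the left, while the nonnegative second group produces precisely the $\sqrt{\tau}\,\dt$ contributions in the statement (since $\tau^2\sum_n\|\dt\phi^n\|^2 = \tau\|\dt\phi\|_{l^2(0,T;L^2)}^2$). The chosen initial data $p_{p,h}^0 = Q_{p,h}p_{p,0}$, $\bbeta_{p,h}^0 = I_{s,h}\bbeta_{p,0}$ gives $\bphi_{s,h}^0 = 0$, $\phi_{pp,h}^0 = 0$.

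The right-hand-side terms not involving $\dt\bphi_{s,h}^n$ are treated by Cauchy--Schwarz, \eqref{Trace}, and \eqref{Young} as in \eqref{rhs1}--\eqref{rhs3}. The more delicate terms involving $\dt\bphi_{s,h}^n$---namely those arising from $a_p^e(\bchi_s^n,\dt\bphi_{s,h}^n)$, $\alpha b_p(\dt\bphi_{s,h}^n,\chi_{pp}^n)$, and $\langle \dt\bphi_{s,h}^n \cdot \n_p,\chi_\lam^n\rangle_{\Gam_{fp}}$---are handled by Abel summation in time, the discrete analogue of the integration by parts used in \eqref{dt-phi-bound}. This transfers $\dt$ from $\bphi_{s,h}^n$ to the approximation-error functions, producing endpoint contributions at $n=0$ and $n=N'$ that are absorbed via Young's inequality into the $l^\infty$ norms of $\bphi_{s,h}$ already on the left. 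Jensen's inequality gives $\tau\sum_n\|\dt\bchi^n\|^2 \le C\|\d_t\bchi\|_{L^2(0,T;\cdot)}^2$, keeping the right-hand side in terms of the continuous-time norms compatible with \eqref{stokesPresProj}--\eqref{LMProj} and \eqref{stokesVel}--\eqref{darcy-bound}.

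Finally I would invoke the inf-sup condition \eqref{inf-sup} together with the error equation for \eqref{disc-h-weak-1} to control $\phi_{fp,h}^n$, $\phi_{pp,h}^n$, and $\phi_{\lam,h}^n$ in $l^2(0,T;\cdot)$ as in \eqref{eq-error-ineq-inf-sup}, add this bound to the energy inequality with $\epsilon$'s chosen sufficiently small, and close the argument with a discrete Gronwall inequality followed by the approximation bounds and the Taylor estimates for $r_n^p,r_n^s$. I expect the main obstacle to be the discrete summation by parts on the $\dt\bphi_{s,h}$ terms: the endpoint contributions must be controllable either by the $l^\infty$ norms already present on the left or by problem data, and no uncontrolled $\dt\bphi_{s,h}$-type norm must survive on the right-hand side. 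This is precisely why the time-derivative quantities for $\bbeta_p$ and $p_p$ appear on the right of the estimate as $\|\d_t\cdot\|_{L^2(0,T;\cdot)}$ rather than as $l^2$ norms of $\dt(\cdot)$.
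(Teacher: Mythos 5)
Your proposal follows the same line of argument that the paper sketches in the Appendix: split the error at each time level as in \eqref{errors-def}, form the discrete error equation, identify the backward-Euler truncation residuals $r_n(\theta)=\d_t\theta(t_n)-\dt\theta^n$ (appearing through the $s_0\d_t p_p$, $\alpha\,\div\d_t\bbeta_p$, and $a_{BJS}(\cdot,\d_t\bbeta_p;\cdot,\cdot)$ terms) and bound them via Taylor by $\tau\|\d_{tt}\cdot\|_{L^2}$, use the summation-by-parts identity \eqref{sum-parts} to produce the $l^\infty$ and $\sqrt{\tau}\,\dt$ quantities on the left, transfer $\dt$ from $\bphi_{s,h}$ to the approximation errors by Abel summation exactly as \eqref{dt-phi-bound} does with continuous integration by parts, control the pressure/multiplier errors with the inf-sup condition \eqref{inf-sup}, and close with the discrete Gronwall lemma. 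This matches the paper's proof in all essential respects.
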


\section{Numerical results}
In this section, we present results from several computational
experiments in two dimensions. The fully discrete method
\eqref{disc-h-weak-1}--\eqref{disc-h-b-gamma} has been implemented using 
the finite element package FreeFem++ \cite{freefem}. 
The first test confirms the theoretical
convergence rates for the problem using an analytical solution. 
The second and third examples show the applicability of the
method to modeling fluid flow in an irregularly shaped fractured
reservoir with physical parameters, while the last one performs a
sensitivity analysis for the method with respect to various
parameters.
\begin{figure}[h]
	\centering
	\begin{subfigure}[b]{0.3\textwidth}
	\includegraphics[height=0.25\textheight]{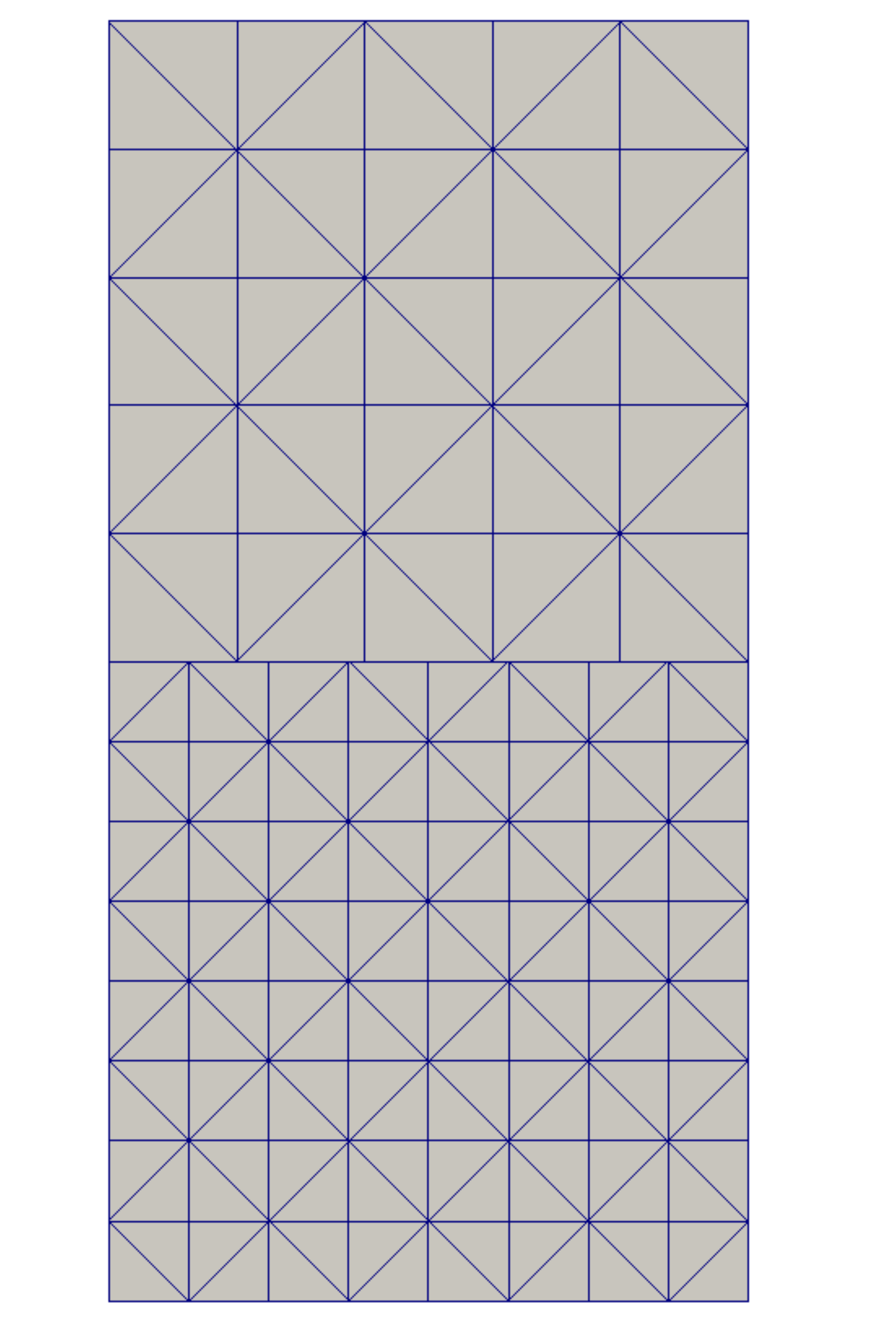}
		\caption{Computational domain $\Omega$ in Example 1, non-matching grids}
		\label{fig:1_1}
	\end{subfigure}
	\begin{subfigure}[b]{0.3\textwidth}
	\includegraphics[height=0.25\textheight]{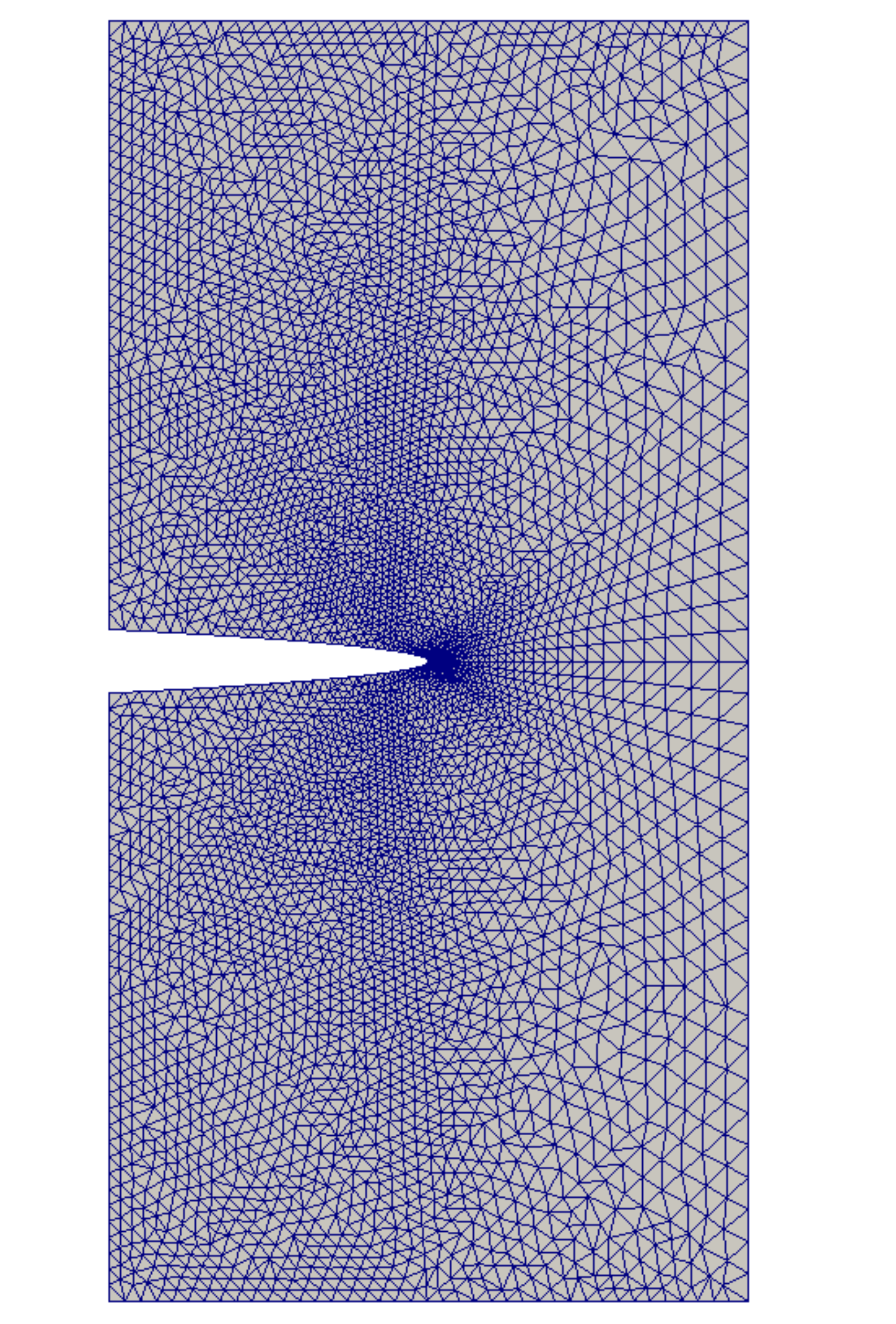}
		\caption{Reference domain $\hat{\Omega}$ in \\Examples 2, 3, and 4}
		\label{fig:1_2}
	\end{subfigure}
	\begin{subfigure}[b]{0.3\textwidth}
	\includegraphics[width=\textwidth]{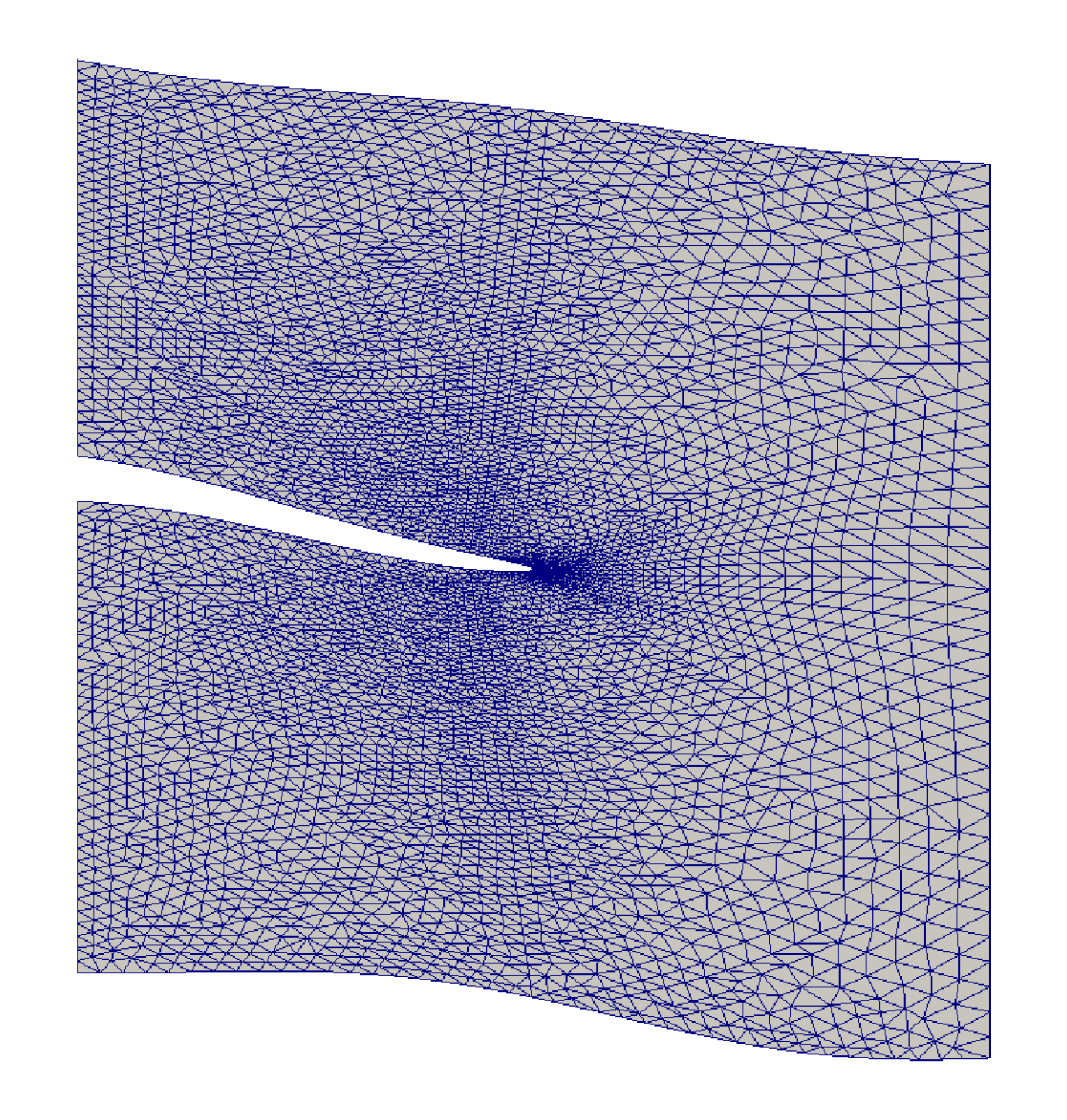}
		\caption{Physical domain $\Omega$ in \\ Examples 2 and 4}
		\label{fig:1_3}
	\end{subfigure}
	\caption{Simulation domains.}\label{fig:1}
\end{figure}

\subsection{Convergence test}
In this test we study the convergence for the space discretization
using an analytical solution.  The domain is $\Omega =
[0,1]\times[-1,1]$, see Figure \ref{fig:1_1}.  We associate the upper
half with the Stokes flow, while the lower half represents the flow in
the poroelastic structure governed by the Biot system.  The appropriate
interface conditions are enforced along the interface $y = 0$.  The
solution in the Stokes region is
\begingroup
\def\arraystretch{1.1}
\begin{align*}
    &\u_f = \pi\cos(\pi t)\begin{pmatrix}-3x+\cos(y) \\ 
y+1 \end{pmatrix}, \quad p_f = \e^t\sin(\pi x)\cos(\frac{\pi y}{2}) + 2\pi \cos(\pi t).
\end{align*}
The Biot solution is chosen accordingly to satisfy the interface
conditions \eqref{eq:mass-conservation}-\eqref{Gamma-fp-1}:
\begin{align*}
    &\u_p = \pi\e^t \begin{pmatrix} \cos(\pi x)\cos(\frac{\pi y}{2}) \\ \frac12\sin(\pi x)\sin(\frac{\pi y}{2}) \end{pmatrix}, \quad p_p = \e^t\sin(\pi x)\cos(\frac{\pi y}{2}), \quad \bbeta_p = \sin(\pi t) \begin{pmatrix}-3x+\cos(y) \\ y+1 \end{pmatrix}.
\end{align*}
\endgroup
The right hand side functions $\f_f,\, q_f,\, \f_p$ and $q_p$ are
computed from \eqref{stokes1}--\eqref{eq:biot2} using the above
solution. The model problem is then complemented with the appropriate
Dirichlet boundary conditions and initial data. The total simulation
time for this test case is $T=0.01$s and the time step is $\Delta t =
10^{-3}$s. The time step is sufficiently small, so that the time discretization
error does not affect the convergence rates.

We study the convergence for two choices of finite element spaces. The
lower order choice is the MINI elements \cite{arnold1984stable} 
$\mathcal{P}^b_1 - \mathcal{P}_1$ for Stokes, the Raviart-Thomas \cite{raviart1977mixed}
$\mathcal{RT}_0-\mathcal{P}_0$ and continuous Lagrangian
$\mathcal{P}_1$ elements for the Biot system, and piecewise constant
Lagrange multiplier $\mathcal{P}_0$. In this case $k_f = 1$, $s_f = 1$, 
$k_p = 0$, $s_p = 0$, and $k_s = 1$, so Theorem
\ref{error-fully-discrete} implies first order of convergence for all
variables. The higher order choice is the Taylor-Hood
\cite{taylor1973numerical} $\mathcal{P}_2-\mathcal{P}_1$ for Stokes,
the Raviart-Thomas $\mathcal{RT}_1-\mathcal{P}_1^{dc}$ and
$\mathcal{P}_2$ for Biot, and $\mathcal{P}^{dc}_1$ for the Lagrange
multiplier, with $k_f = 2$, $s_f = 1$, $k_p = 1$, $s_p = 1$, and $k_s = 2$,
in which case second order convergence rate for all
variables is expected. These theoretical results are verified by the
rates shown in the Table \ref{T1}, where the errors were computed on a
sequence of refined meshes, which are matching along the interface.

\begingroup
\def\arraystretch{1.1}
\begin{table}[ht!]
\begin{center}
\begin{tabular}{c|cc|cc|cc|cc|cc}
\hline
\multicolumn{11}{c}{$\mathcal{P}^b_1 - \mathcal{P}_1,\, \mathcal{RT}_0-\mathcal{P}_0,\, \mathcal{P}_1$ and $\mathcal{P}_0$} \\
\hline
& \multicolumn{2}{c|}{$\|\e_{f}\|_{l^{2}(H^1(\domf))}$} & \multicolumn{2}{c|}{$\|e_{fp}\|_{l^{2}(L^2(\domf))} $} & \multicolumn{2}{c|}{$\|\e_{p}\|_{l^{2}(L^2(\domp))}$} & \multicolumn{2}{c|}{$ \|e_{pp}\|_{l^{\infty}(L^2(\domp))} $} & \multicolumn{2}{c}{$\|{\boldsymbol \e}_{s}\|_{l^{\infty}(H_1(\domp))}$} \\ 
$h$	&	error	&	rate	&	error	&	rate	&	error	&	rate	&	error	&	rate	&	error	&	rate
	\\ \hline
1/8	&	8.96E-03	&	--	&	2.61E-03	&	--	&	1.05E-01	&	--	&	1.03E-01	&	--	&	5.09E-02	&	--	\\
1/16	&	4.47E-03	&	1.0	&	8.33E-04	&	1.6	&	5.23E-02	&	1.0	&	5.17E-02	&	1.0	&	1.34E-02	&	1.9	\\
1/32	&	2.24E-03	&	1.0	&	2.76E-04	&	1.6	&	2.61E-02	&	1.0	&	2.59E-02	&	1.0	&	3.94E-03	&	1.8	\\
1/64	&	1.12E-03	&	1.0	&	9.43E-05	&	1.6	&	1.31E-02	&	1.0	&	1.29E-02	&	1.0	&	1.43E-03	&	1.5	\\
1/128	&	5.59E-04	&	1.0	&	3.28E-05	&	1.5	&	6.53E-03	&	1.0	&	6.47E-03	&	1.0	&	6.32E-04	&	1.2	\\
\hline
\multicolumn{11}{c}{$\mathcal{P}_2 - \mathcal{P}_1,\, \mathcal{RT}_1-\mathcal{P}_1^{dc},\, \mathcal{P}_2$ and $\mathcal{P}_1^{dc}$} \\
\hline
& \multicolumn{2}{c|}{$\|\e_{f}\|_{l^{2}(H^1(\domf))}$} & \multicolumn{2}{c|}{$\|e_{fp}\|_{l^{2}(L^2(\domf))} $} & \multicolumn{2}{c|}{$\|\e_{p}\|_{l^{2}(L^2(\domp))}$} & \multicolumn{2}{c|}{$ \|e_{pp}\|_{l^{\infty}(L^2(\domp))} $} & \multicolumn{2}{c}{$\|{\boldsymbol \e}_{s}\|_{l^{\infty}(H_1(\domp))}$} \\ 
$h$	&	error	&	rate	&	error	&	rate	&	error	&	rate	&	error	&	rate	&	error	&	rate
	\\ \hline
1/8	&	1.25E-04	&	--	&	1.31E-03	&	--	&	1.82E-02	&	--	&	1.60E-02	&	--	&	1.54E-01	&	--	\\
1/16	&	2.90E-05	&	2.1	&	3.25E-04	&	2.0	&	4.38E-03	&	2.1	&	4.01E-03	&	2.0	&	3.82E-02	&	2.0	\\
1/32	&	7.06E-06	&	2.0	&	8.07E-05	&	2.0	&	1.08E-03	&	2.0	&	1.00E-03	&	2.0	&	9.51E-03	&	2.0	\\
1/64	&	1.77E-06	&	2.0	&	1.97E-05	&	2.0	&	2.67E-04	&	2.0	&	2.51E-04	&	2.0	&	2.37E-03	&	2.0	\\
1/128	&	4.73E-07	&	1.9	&	4.51E-06	&	2.1	&	6.47E-05	&	2.0	&	6.23E-05	&	2.0	&	5.89E-04	&	2.0	\\
\hline
\end{tabular}
\end{center}
\caption{Example 1: relative numerical errors and convergence rates on matching grids.}
\label{T1}
\end{table}
\endgroup

We also perform a convergence test with the lower order choice of
finite elements on non-matching grids along the interface. We
prescribe the ratio between mesh characteristic sizes to be
$h_{Stokes} = \frac58 h_{Biot}$ as shown in Figure
\ref{fig:1_1}. According to the results shown in Table \ref{T2}, first
order convergence is observed for all variables, which agrees with 
Theorem \ref{error-fully-discrete}.

\begingroup
\def\arraystretch{1.1}
\begin{table}[ht!]
\begin{center}
\begin{tabular}{c|cc|cc|cc|cc|cc}
\hline
\multicolumn{11}{c}{$\mathcal{P}^b_1 - \mathcal{P}_1,\, \mathcal{RT}_0-\mathcal{P}_0,\, \mathcal{P}_1$ and $\mathcal{P}_0$} \\
\hline
& \multicolumn{2}{c|}{$\|\e_{f}\|_{l^{2}(H^1(\domf))}$} & \multicolumn{2}{c|}{$\|e_{fp}\|_{l^{2}(L^2(\domf))} $} & \multicolumn{2}{c|}{$\|\e_{p}\|_{l^{2}(L^2(\domp))}$} & \multicolumn{2}{c|}{$ \|e_{pp}\|_{l^{\infty}(L^2(\domp))} $} & \multicolumn{2}{c}{$\|{\boldsymbol \e}_{s}\|_{l^{\infty}(H_1(\domp))}$} \\ 
$h_{Biot}$	&	error	&	rate	&	error	&	rate	&	error	&	rate	&	error	&	rate	&	error	&	rate
	\\ \hline
1/8	&	1.43E-02	&	--	&	6.06E-03	&	--	&	1.05E-01	&	--	&	1.03E-01	&	--	&	5.09E-02	&	--	\\
1/16	&	7.16E-03	&	1.0	&	1.79E-03	&	1.8	&	5.23E-02	&	1.0	&	5.17E-02	&	1.0	&	1.34E-02	&	1.9	\\
1/32	&	3.58E-03	&	1.0	&	5.81E-04	&	1.6	&	2.61E-02	&	1.0	&	2.59E-02	&	1.0	&	3.94E-03	&	1.8	\\
1/64	&	1.79E-03	&	1.0	&	1.95E-04	&	1.6	&	1.31E-02	&	1.0	&	1.29E-02	&	1.0	&	1.43E-03	&	1.5	\\
1/128	&	8.94E-04	&	1.0	&	6.77E-05	&	1.5	&	6.53E-03	&	1.0	&	6.47E-03	&	1.0	&	6.32E-04	&	1.2	\\
\hline
\end{tabular}
\end{center}
\caption{Example 1: relative numerical errors and convergence rates on non-matching grids.}
\label{T2}
\end{table}
\endgroup

\subsection{Application to flow through fractured reservoirs}
For the rest of the cases, we introduce the reference domain
$\hat{\Omega}$ given by the rectangle $[0, 1] \times [-1,1]$m, see
Figure \ref{fig:1_2}. A fracture, which represents the reference fluid
domain $\hat{\Omega}_f$ is then positioned in the middle of the
rectangle, with the boundary defined by
$$
\hat{x}^2 = 200(0.05 - \hat{y})(0.05 + \hat{y}), \quad \hat{y} \in [-0.05,0.05].
$$
Furthermore, the physical domain $\Omega$, see Figure \ref{fig:1_3},
with more realistic geometry, is defined as a transformation of the
reference domain $\hat{\Omega}$ by the mapping
\cite{Bukac-Nitsche}
\begin{equation*}
	\begin{bmatrix} x \\ y \end{bmatrix}  
        = \begin{bmatrix} \hat x \\ 5\cos(\frac{\hat{x}+\hat{y}}{100})\cos(\frac{\pi \hat{x}+\hat{y}}{100})^2+\hat{y}/2-\hat{x}/10\end{bmatrix}.
\end{equation*}
The external boundary of $\Omega_f$ is denoted as
$\Gamma_{f,inflow}$, while the external boundary of
$\Omega_p$ is split into $\Gamma_{p,\star},$ where 
$\star \in \{left,\,right,\,top,\,bottom \}$.

The next example is focused on modeling the interaction between a
stationary fracture filled with fluid and the surrounding poroelastic
reservoir. We are interested in the solution on the physical domain
$\Omega$. The physical units are meters for length, seconds for time,
and KPa for pressure. The boundary conditions are chosen to be
\begin{align*}
    &\text{Injection:} && \u_f \cdot \n_f = 10, \quad \u_f \cdot \btau_f = 0 && \text{on } \Gamma_{f,inflow}, \\
	&\text{No flow:}  & & \u_p \cdot \n_p = 0 & &\text{on } \Gamma_{p,left}, \\
	&\text{Pressure:} & & p_p = 1000 & &\text{on } \Gamma_{p,bottom}\cup \Gamma_{p,right}\cup \Gamma_{p,top}, \\
	&\text{Normal displacement:} & & \bbeta_p \cdot \n_p = 0 & &\text{on } 
         \Gamma_{p,top} \cup \Gamma_{p, right} \cup \Gamma_{p, bottom}, \\
	&\text{Shear traction:} & & (\bs_p \n_p) \cdot \btau_p = 0 
        & &\text{on } \Gamma_{p,top} \cup \Gamma_{p, right} \cup\Gamma_{p, bottom}.
\end{align*}
The initial conditions are set accordingly to $\bbeta_p(0) = 0$ m and
$p_p(0) = 10^3$ KPa.  The total simulation time is $T=300$ s and the
time step is $\Delta t = 1$ s. The model parameters are given in
Table~\ref{T3}.  These parameters are realistic for hydraulic
fracturing and are similar to the ones used in \cite{Girault-2015}.
The Lam\'{e} coefficients are determined from the Young's modulus $E$
and the Poisson's ratio $\nu$ via the relationships $\lambda_p =
E\nu/[(1+\nu)(1-2\nu)]$, $\mu_p = E/[2(1+\nu)]$. We note that this is
a challenging computational test due to the large variation in
parameter values.

For this and the rest of the test cases we use the Taylor-Hood $\mathcal{P}_2 -
\mathcal{P}_1$ \cite{taylor1973numerical} elements for the fluid
velocity and pressure in the fracture region, the Raviart-Thomas
$\mathcal{RT}_1-\mathcal{P}_1^{dc}$ elements
for the Darcy velocity and pressure, the continuous Lagrangian
$\mathcal{P}_1$ elements for the structure displacement, and the 
$\mathcal{P}_1^{dc}$ elements for the Lagrange multiplier.
\begingroup
\def\arraystretch{1.1}
\begin{table}[ht!]
\begin{center}
\begin{tabular}{|l l l l |}
	\hline
	Parameter                          & Symbol      & Units         & Values                        \\ \hline\hline
	Young's modulus                    & $E$         & (KPa)         & $10^7$                        \\
	Poisson's ratio                    & $\nu$         &               & $0.2$                        \\
	Lam\'{e} coefficient                   & $\lambda_p$ & (KPa)         & $5/18\times 10^{7}$ \\
	Lam\'{e} coefficient                   & $\mu_p$     & (KPa)         & $5/12\times 10^{7}$ \\
	Dynamic viscosity                  & $\mu$       & (KPa s)       & $10^{-6}$                     \\
	Permeability                       & $K$         & (m$^2$)       & $diag(200,50) \times 10^{-12}$ \\
	Mass storativity                   & $s_0$       & (KPa$^{-1}$)  & $6.89\times 10^{-2}$          \\
	Biot-Willis constant               & $\alpha$    &               & 1.0                             \\
	Beavers-Joseph-Saffman coefficient & $\alpha_{BJS}$     &        & 1.0 \\
	Total time                         & T           & (s)           & 300                           \\ \hline
\end{tabular}
\end{center}
\caption{Poroelasticity and fluid parameters in Example 2.}
\label{T3}
\end{table}
\endgroup
\begin{figure}[ht!]
	\centering
	\begin{subfigure}[b]{0.44\textwidth}
		\includegraphics[width=\textwidth]{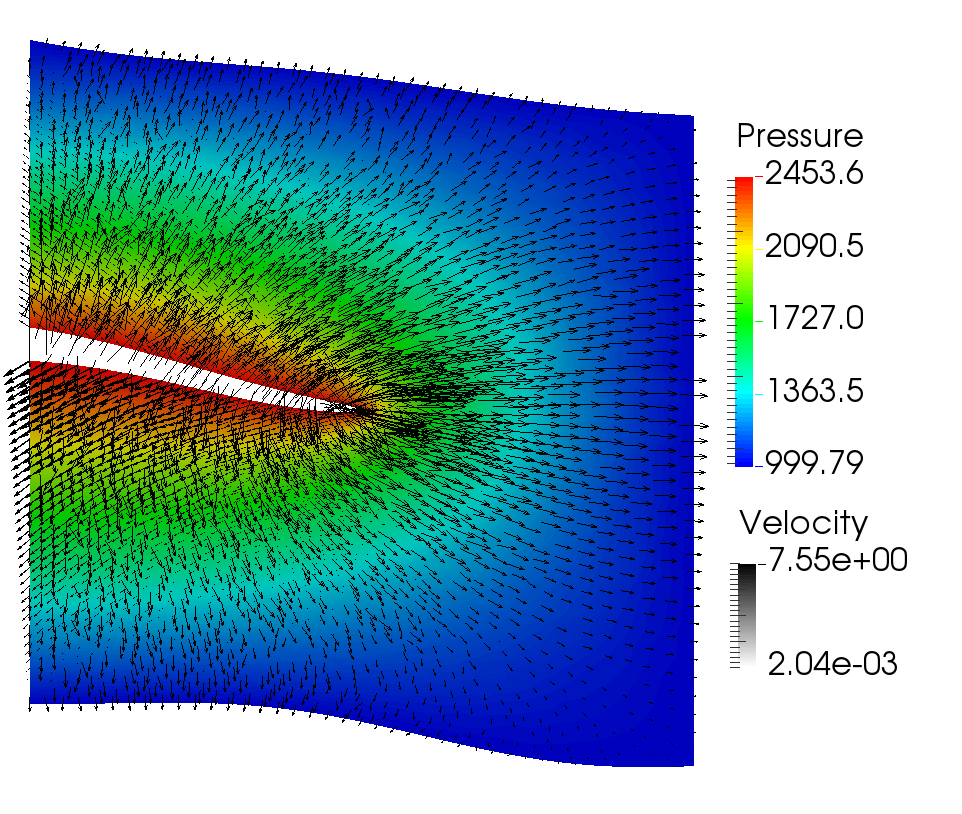}
		\caption{Darcy velocity field (m/s) over pressure (KPa)}
		\label{fig:2_1}
	\end{subfigure}
	\begin{subfigure}[b]{0.44\textwidth}
		\includegraphics[width=\textwidth]{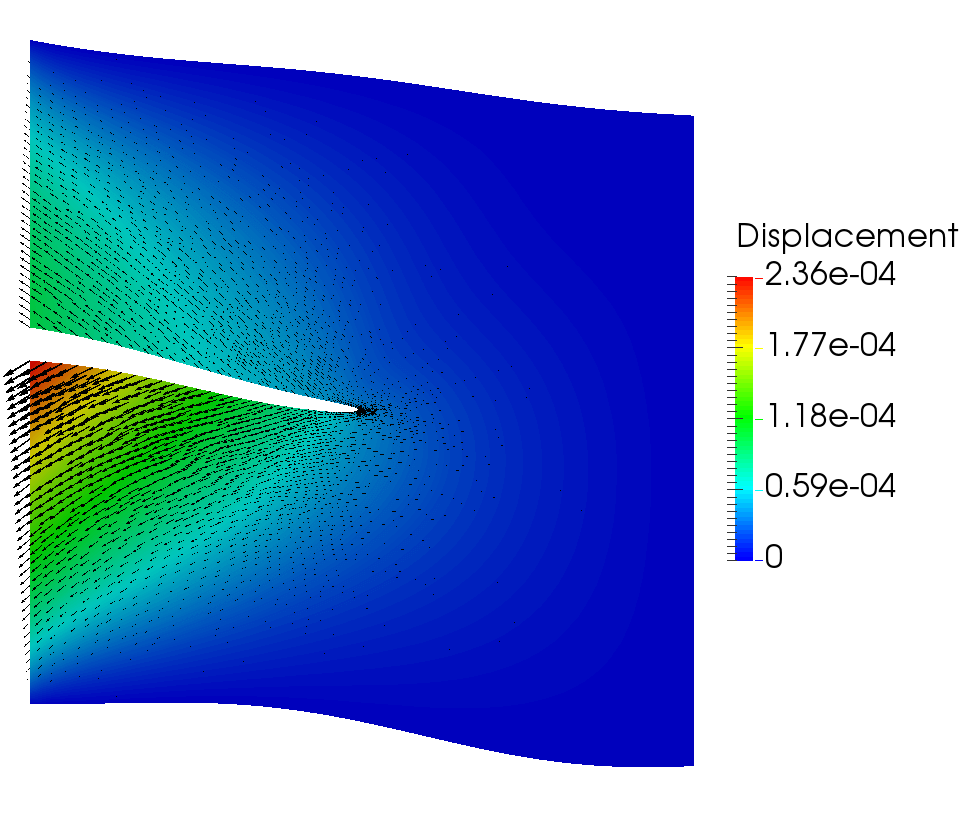}
		\caption{Structure displacement field (m)}
		\label{fig:2_2}
	\end{subfigure}
	\begin{subfigure}[b]{0.44\textwidth}
		\includegraphics[width=\textwidth]{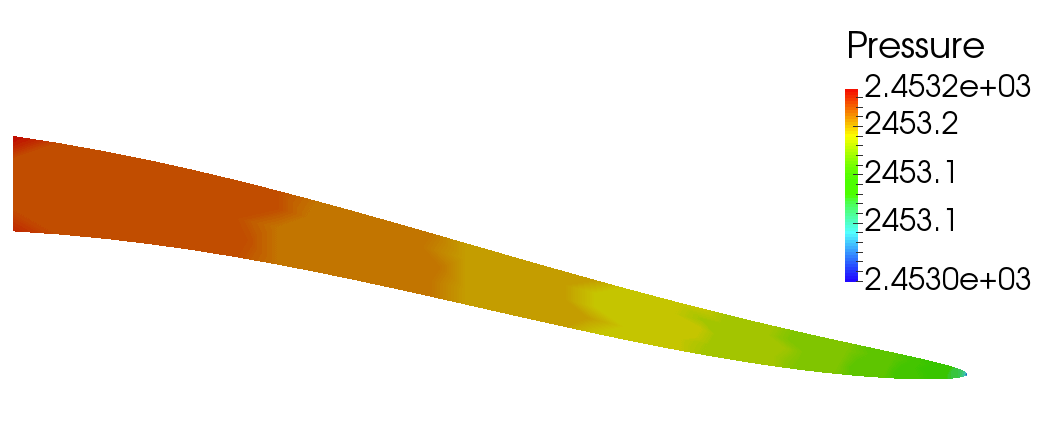}
		\caption{Fluid pressure (KPa) in the fracture}
		\label{fig:2_3}
	\end{subfigure}
	\begin{subfigure}[b]{0.44\textwidth}
		\includegraphics[width=\textwidth]{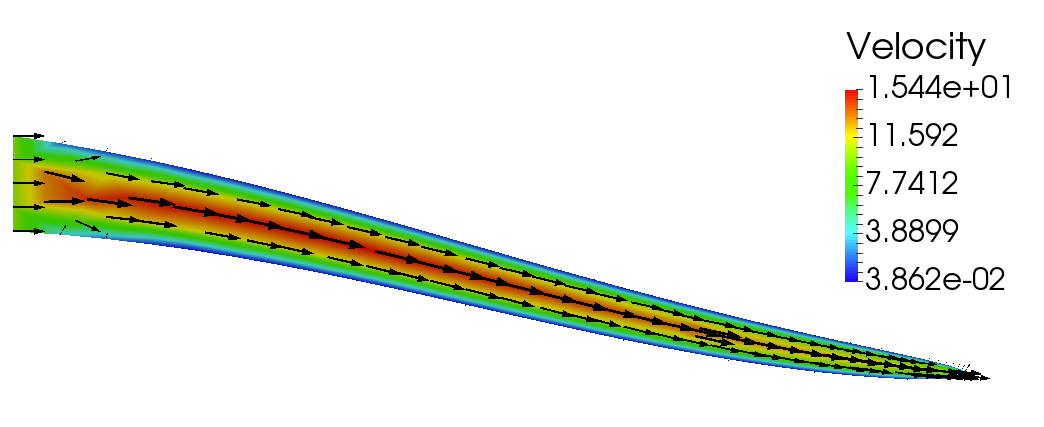}
		\caption{Fluid velocity field (m/s) in the fracture}
		\label{fig:2_4}
	\end{subfigure}
	\caption{Example 2: fluid flow in a fractured reservoir, $t=300$ s.}\label{fig:2}
\end{figure}

Figure \ref{fig:2} shows the structure (top) and fracture (bottom)
regions for the described test case at the final time $T=300$ s. The
grayscale velocity legend in Figure \ref{fig:2_1} is included to show
the range of the Darcy velocity magnitude. We observe channel-like
flow in the fracture region, which concentrates at the tip, and
leak-off into the reservoir. The fluid pressure in the reservoir has
increased in the vicinity of the fracture from the initial value of
1000 KPa to approximately 2450 KPa, which is close to the pressure in
the fracture. The pressure drop in the reservoir in the direction away
from the fracture is significant, but the resulting Darcy velocity is
relatively small, due to the very low permeability. The displacement
field shows that the fracture tends to open as the fluid is being
injected, with the deformation of the rock being largest around the
fracture and quickly approaching zero away from the it, which is
expected due to large stiffness of the rock. This example demonstrates
the ability of the proposed method to handle irregularly shaped
domains with a computationally challenging set of parameters, which
are realistic for hydrualic fracturing in tight rock formations.

\subsection{Flow through fractured reservoir with heterogeneous permeability}

In this example we illustrate the ability of the method to handle
heterogeneous permeability and Young's modulus. For this simulation we
use the reference domain $\hat{\Omega}$, see Figure~\ref{fig:1_2}. The
same boundary and initial conditions as in the previous test case are
specified, and the same physical parameters from Table~\ref{T3} are
used, except for the permeability $K$ and the Young's modulus $E$. The
permeability and porosity data is taken from a two-dimensional
cross-section of the data provided by the Society of Petroleum
Engineers (SPE) Comparative Solution
Project\footnote{www.spe.org/web/csp}.  The SPE data, which is given
on a rectangular $60\times 220$ grid is projected onto the triangular
grid on the reference domain $\hat \Omega$, and visualized in
Figure~\ref{fig:3}. We note that the permeability tensor is isotropic
in this example.  Given the porosity $\phi$ the Young's modulus is
determined from the law
$$ 
E = 10^7 \left(1 - \frac{\phi}{c}\right)^{2.1},
$$
where the constant $c = 0.5$ refers to the porosity at which the
effective Young's modulus becomes zero. This constant is chosen in
general based on the properties of the porous medium. The
justification for this law can be found in
\cite{kovavcik1999correlation}.
\begin{figure}[ht!]
	\centering
	\begin{subfigure}[b]{0.32\textwidth}
		\includegraphics[width=\textwidth]{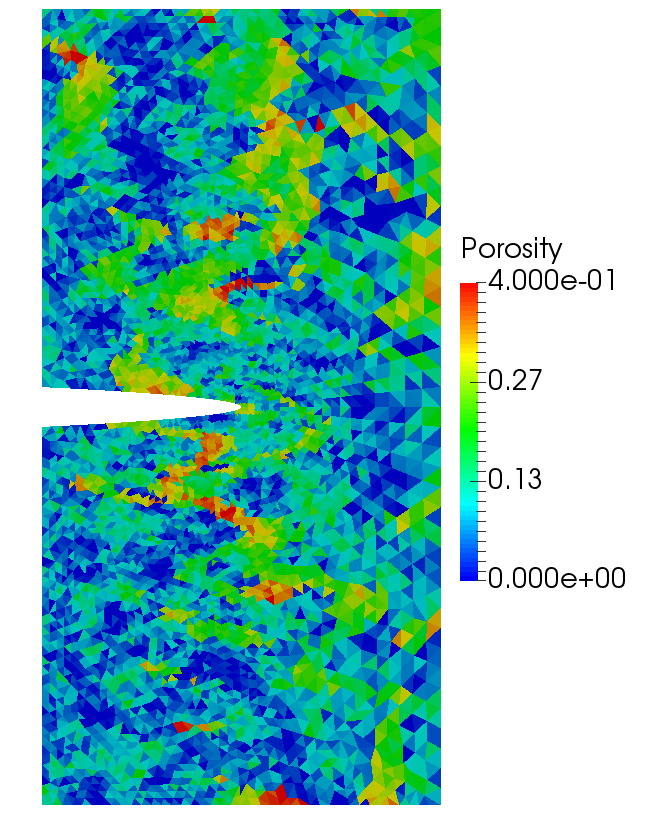}
		\caption{Porosity}
		\label{fig:3_1}
	\end{subfigure}
	\begin{subfigure}[b]{0.32\textwidth}
		\includegraphics[width=\textwidth]{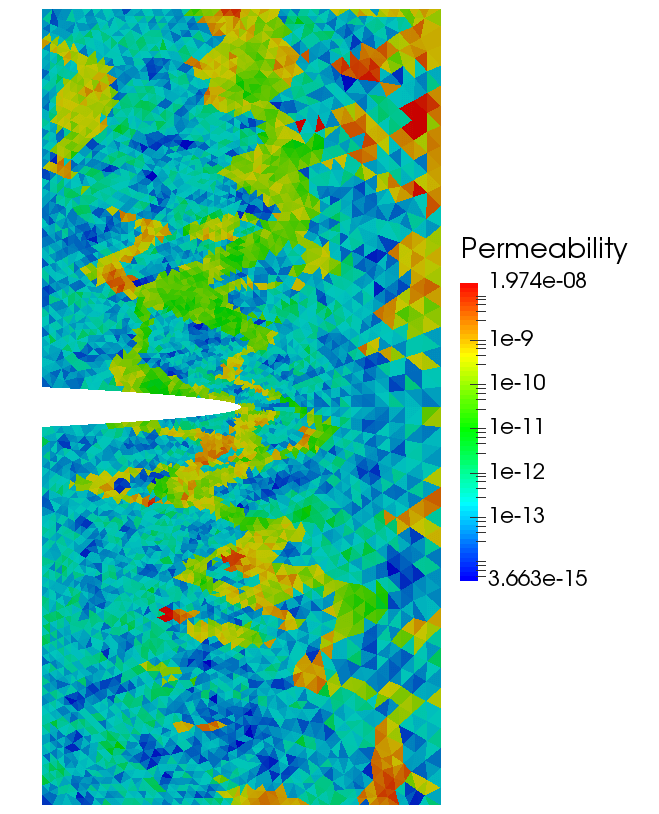}
		\caption{Permeabiltiy}
		\label{fig:3_2}
	\end{subfigure}
	\begin{subfigure}[b]{0.32\textwidth}
		\includegraphics[width=\textwidth]{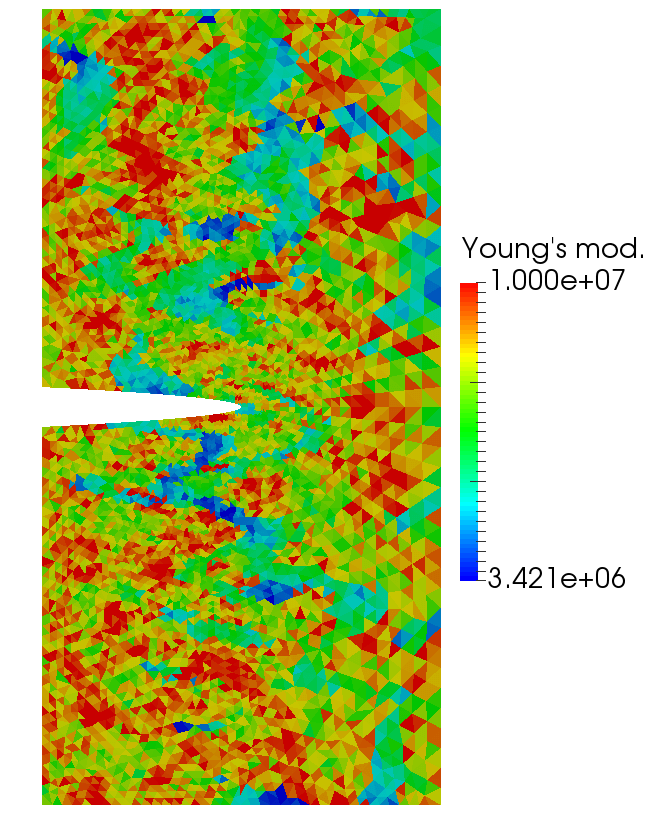}
		\caption{Young's modulus}
		\label{fig:3_3}
	\end{subfigure}
	\caption{Heterogeneous material coefficients in Example 3.}
\label{fig:3}
\end{figure}
\begin{figure}[ht!]
	\centering
	\begin{subfigure}[b]{0.32\textwidth}
		\includegraphics[width=\textwidth]{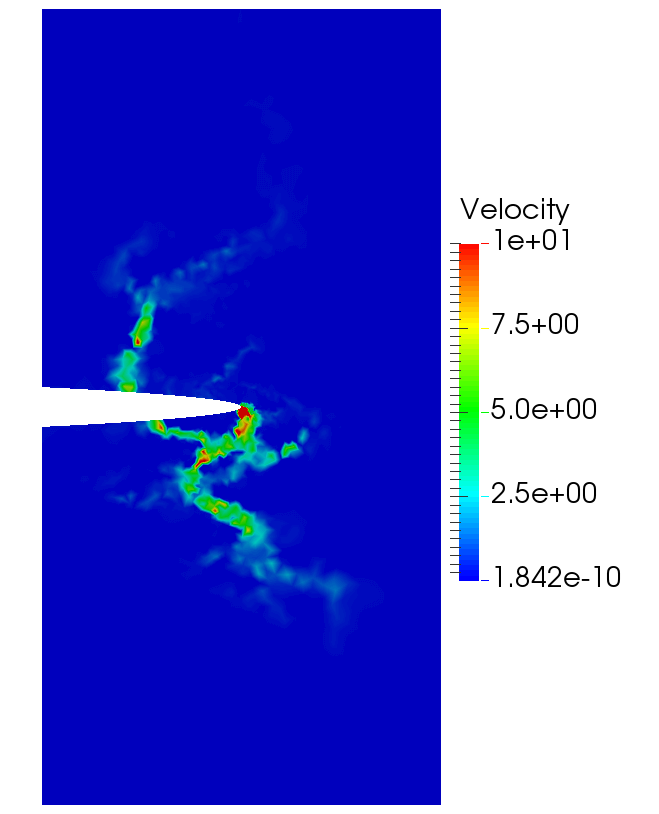}
		\caption{Darcy velocity magnitude (m/s)}
		\label{fig:4_1}
	\end{subfigure}
	\begin{subfigure}[b]{0.32\textwidth}
		\includegraphics[width=\textwidth]{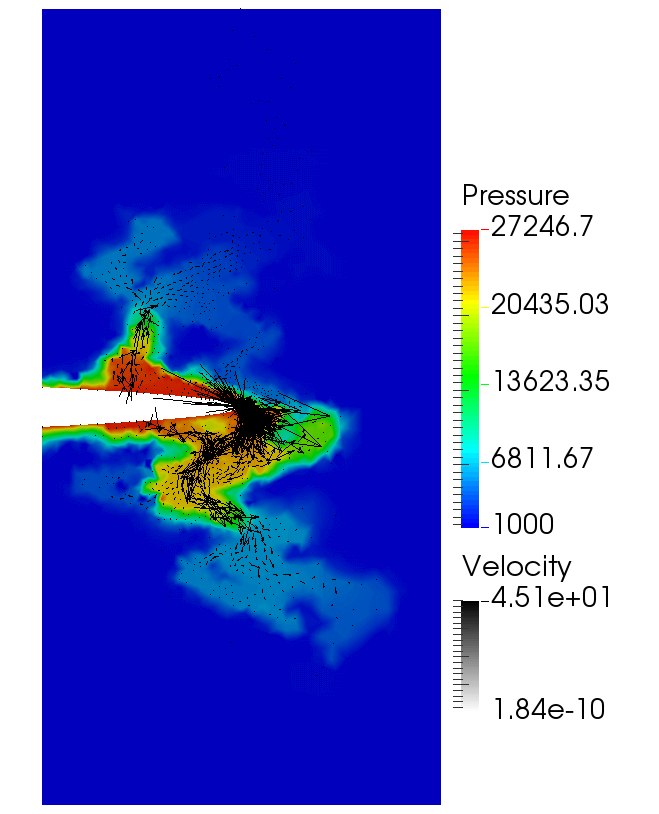}
		\caption{Velocity over pressure (KPa)}
		\label{fig:4_2}
	\end{subfigure}
	\begin{subfigure}[b]{0.32\textwidth}
		\includegraphics[width=\textwidth]{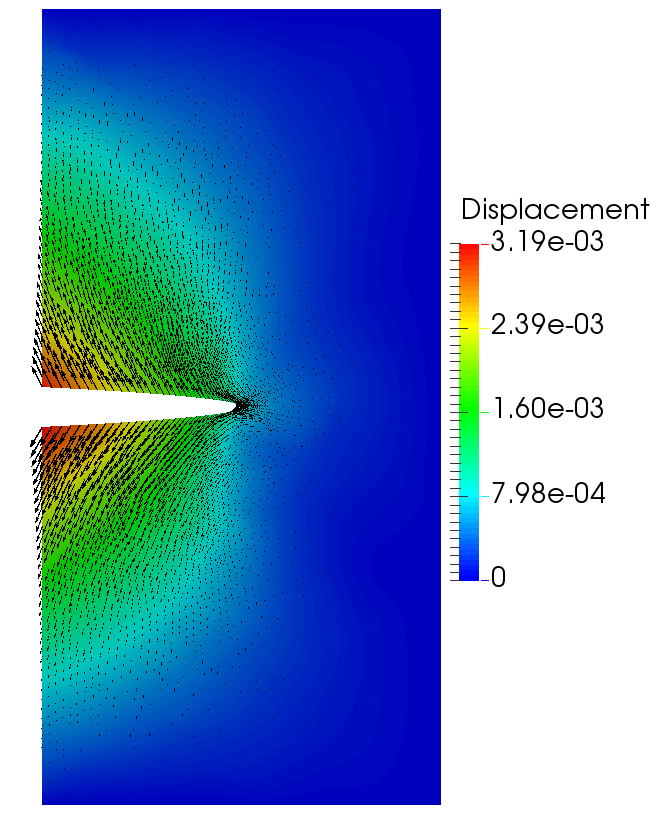}
		\caption{Displacement field (m)}
		\label{fig:4_3}
	\end{subfigure}
	\caption{Example 3: fluid flow in a fractured reservoir with heterogeneous permeability and Young's modulus, $t=300$ s.}\label{fig:4}
\end{figure}

The simulation results at the final time $T = 300s$ are shown in
Figure~\ref{fig:4}. Figures \ref{fig:4_1} and \ref{fig:4_2} show that
the propagation of the fluid in the Darcy region, as evidenced by the
variation in the velocity and pressure, follows the contours of
regions of higher permeability seen in Figure~\ref{fig:3_2}).  As in
the previous test case, the highest velocity in the reservoir is near
the fracture tip. However, the leak-off along the fracture is less
uniform, with a significant leak-off near the middle-top of the
fracture due to the region of relatively high permeability located
there. The last Figure \ref{fig:4_3} depicts the nonuniform
displacement field in the reservoir caused by the heterogeneous
Young's modulus. We note that the effect of heterogeneity of the
elastic coefficients is less pronounced due to the large stiffness of
the rock. The general displacement profile is similar to the
homogeneous case.

\subsection{Sensitivity analysis}
The goal of this section is to investigate how the developed model
behaves when the parameters are modified, moving from mild
non-physical values toward more realistic values that resemble the
ones used in the hydraulic fracturing examples. We progressively
update the parameters $K$, $s_0$ and $E$ as shown in Table~\ref{T4},
while the rest of the parameters are taken from Table~\ref{T3}. All test
cases in this section are governed by the same boundary and initial 
conditions as in the previous two examples. 
\begingroup
\def\arraystretch{1.1}
\begin{table}[ht!]
	\begin{center}
		\begin{tabular}{| l | c  c  c  |}
			\hline
			   & $K$ (m$^2$)       & $s_0$ (KPa$^{-1}$) & $E$ (KPa) \\ \hline\hline
			A  & $\bI\times 10^{-6}$           & 1.0                  & $10^3$    \\
			B  & $diag(200,50)\times 10^{-12}$ & 1.0                  & $10^3$    \\
			C  & $diag(200,50)\times 10^{-12}$ & $10^{-2}$          & $10^3$    \\
			D  & $diag(200,50)\times 10^{-12}$ & $10^{-2}$          & $10^{10}$ \\ \hline
		\end{tabular}
	\end{center}
	\caption{Set of parameters for the sensitivity analysis in Example 4.}
	\label{T4}
\end{table}
\endgroup
\begin{figure}[ht!]
	\centering
	\begin{subfigure}[b]{0.4\textwidth}
		\includegraphics[width=5.65cm,height=5.65cm,keepaspectratio]{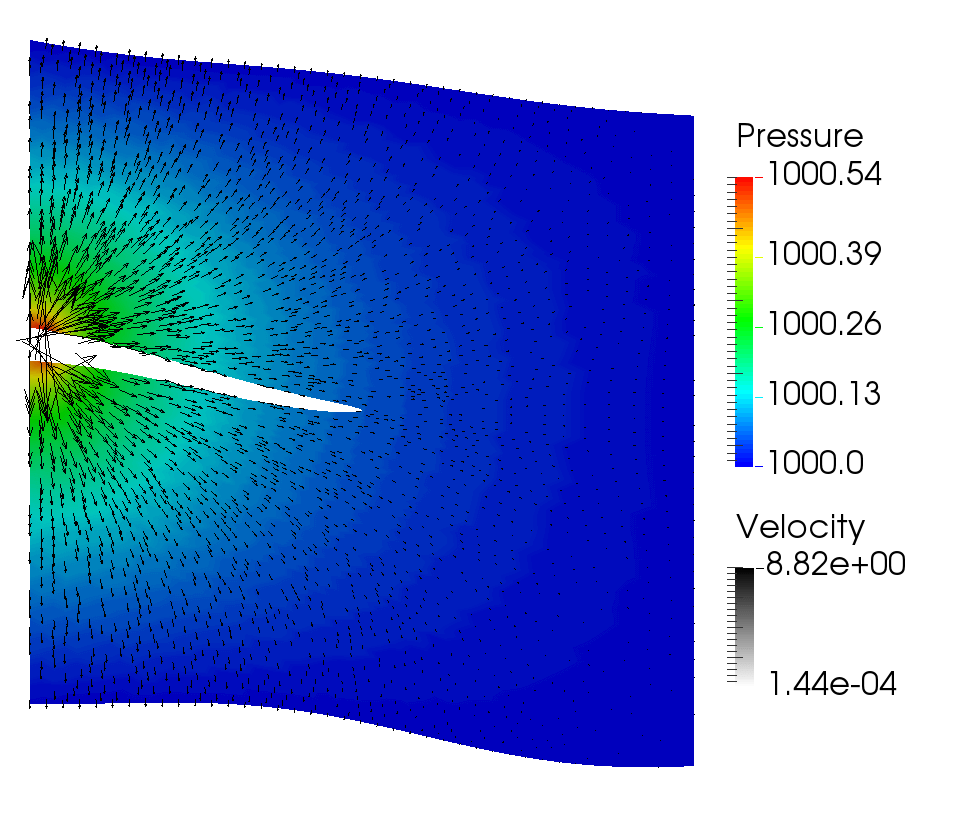}
		\label{fig:5_1}
	\end{subfigure}
	\begin{subfigure}[b]{0.4\textwidth}
		\includegraphics[width=5.65cm,height=5.65cm,keepaspectratio]{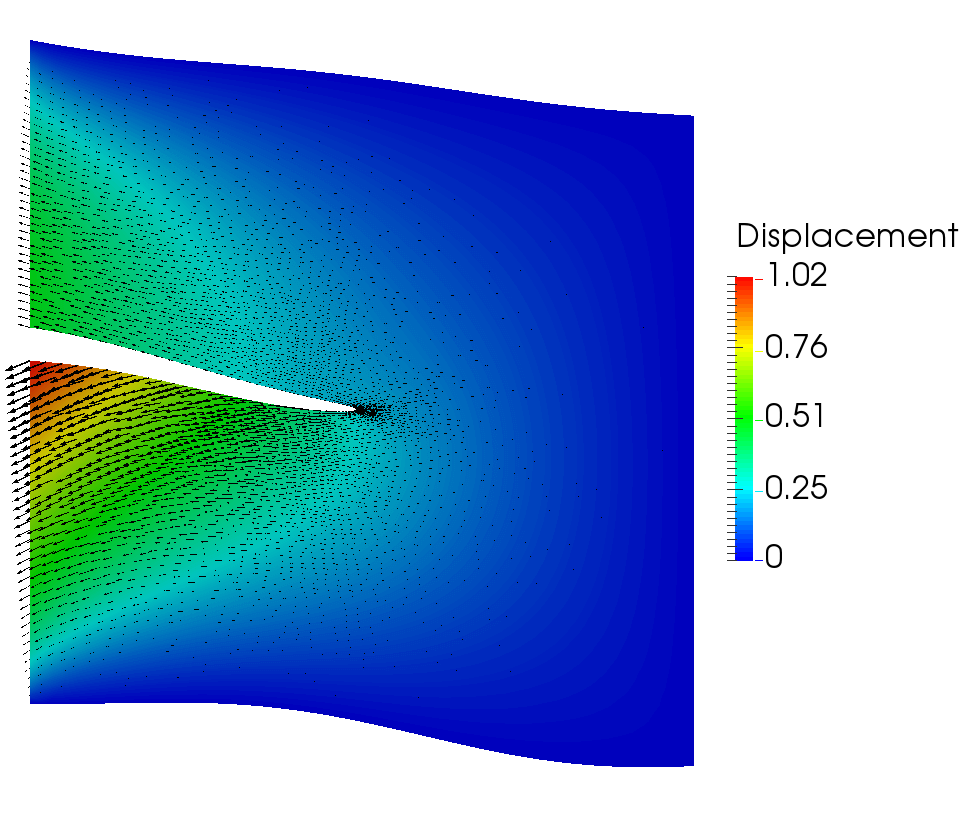}
		\label{fig:5_2}
	\end{subfigure}
	\begin{subfigure}[b]{0.4\textwidth}
		\includegraphics[width=5.65cm,height=5.65cm,keepaspectratio]{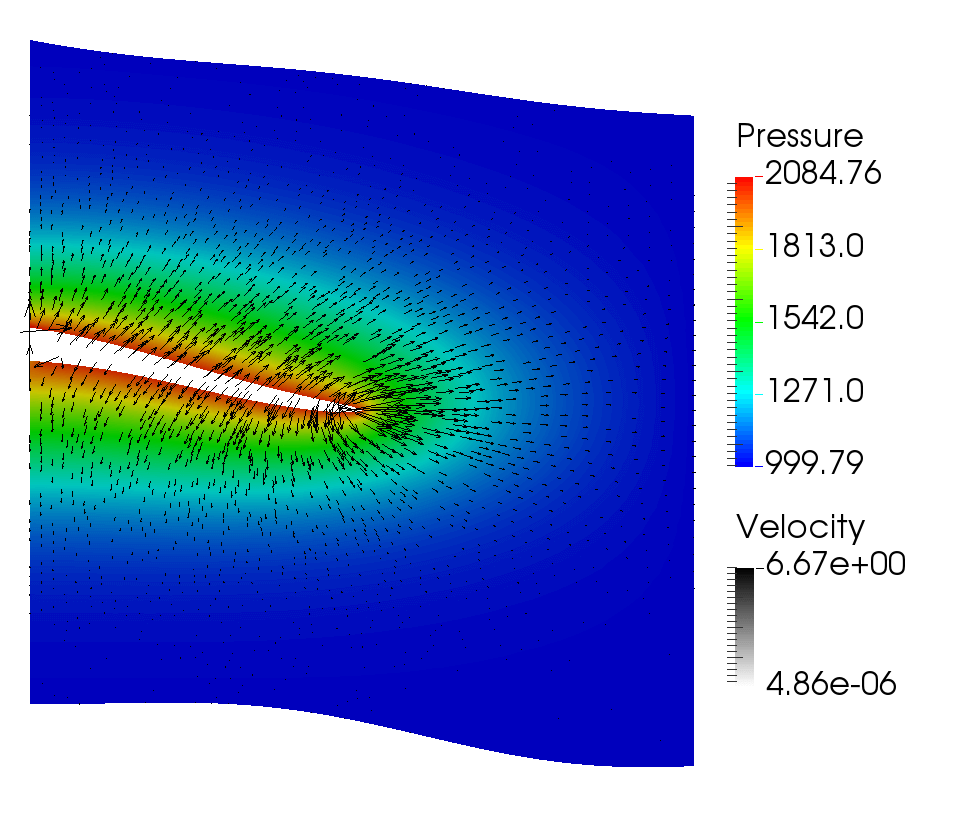}
		\label{fig:5_3}
	\end{subfigure}
	\begin{subfigure}[b]{0.4\textwidth}
		\includegraphics[width=5.65cm,height=5.65cm,keepaspectratio]{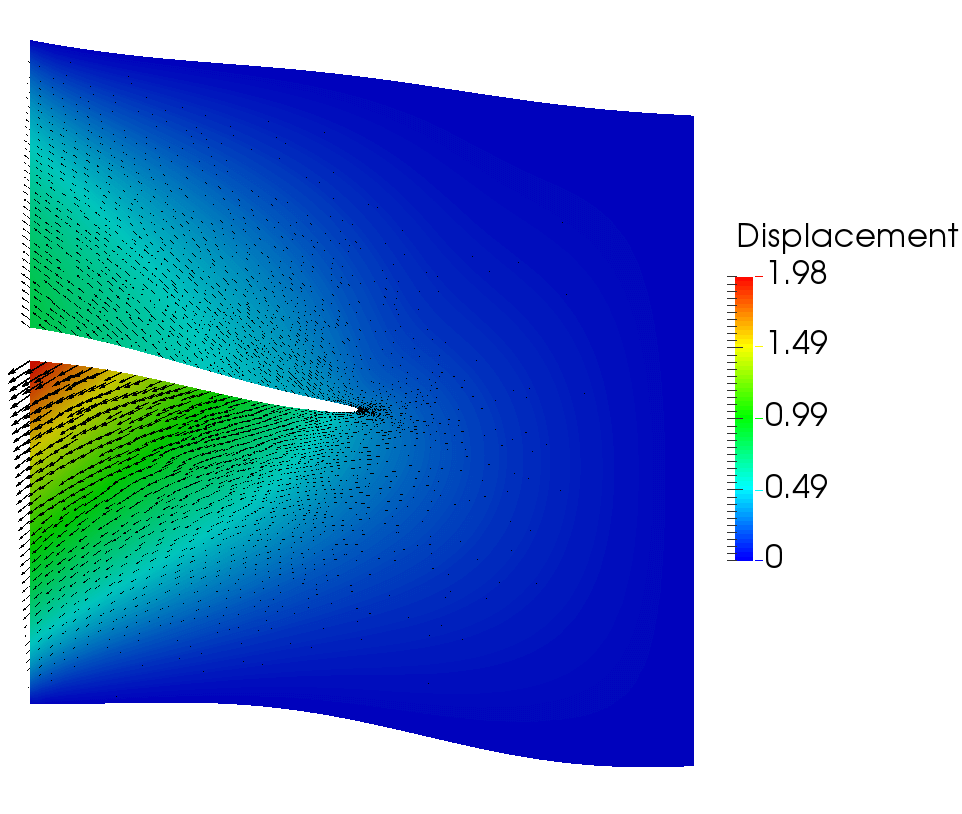}
		\label{fig:5_4}
	\end{subfigure}
	\begin{subfigure}[b]{0.4\textwidth}
		\includegraphics[width=5.65cm,height=5.65cm,keepaspectratio]{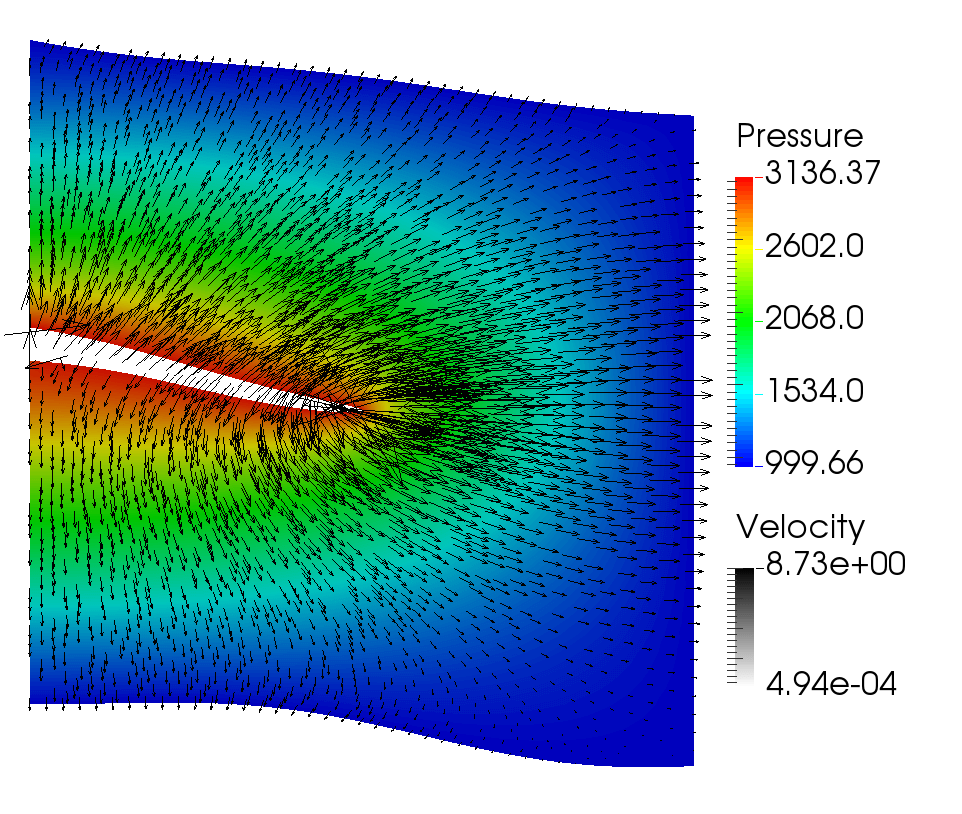}
		\label{fig:5_5}
	\end{subfigure}
	\begin{subfigure}[b]{0.4\textwidth}
		\includegraphics[width=5.65cm,height=5.65cm,keepaspectratio]{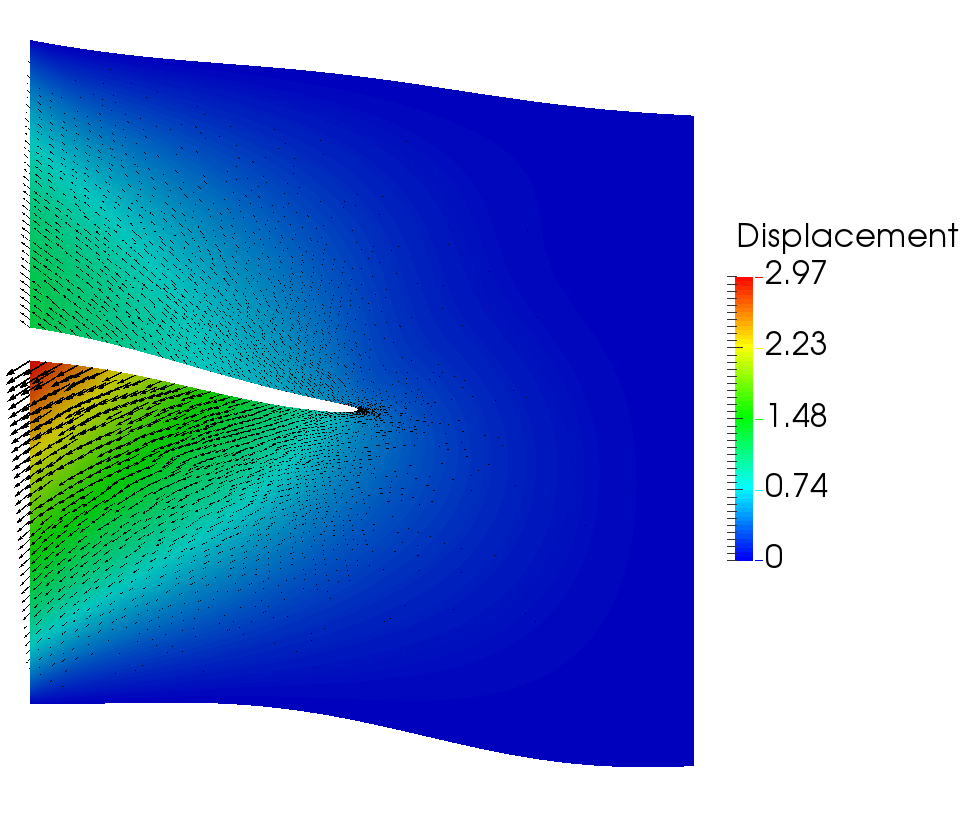}
		\label{fig:5_6}
	\end{subfigure}
	\begin{subfigure}[b]{0.4\textwidth}
		\includegraphics[width=5.65cm,height=5.65cm,keepaspectratio]{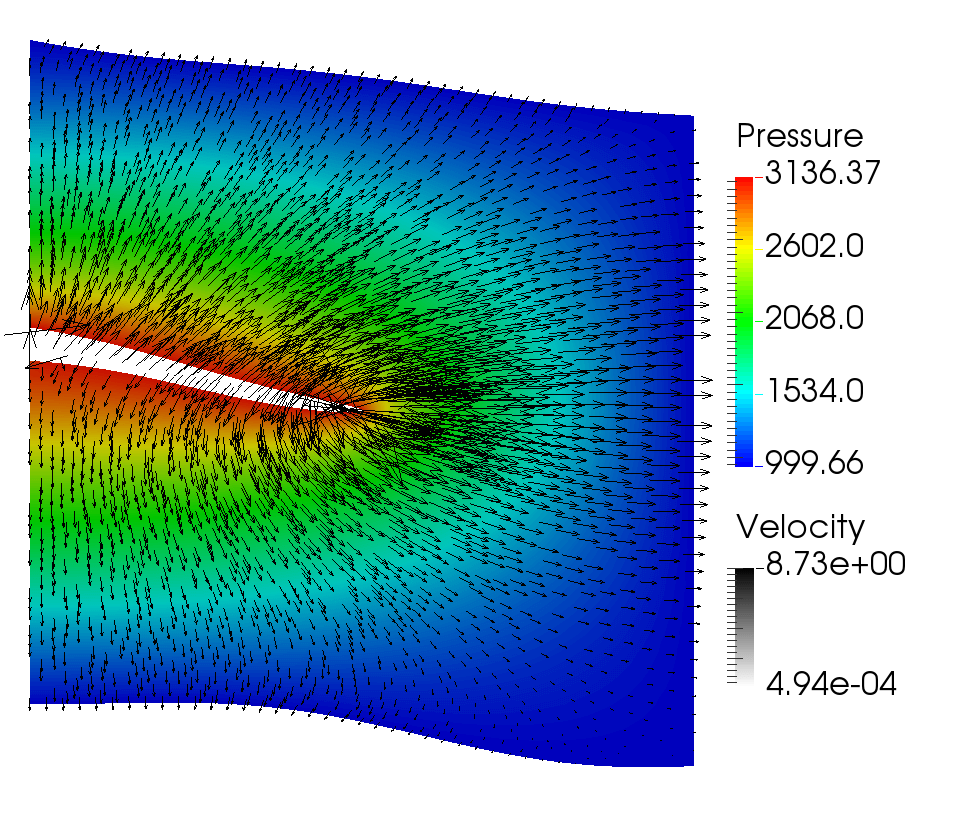}
		\label{fig:5_7}
	\end{subfigure}
	\begin{subfigure}[b]{0.4\textwidth}
		\includegraphics[width=5.65cm,height=5.65cm,keepaspectratio]{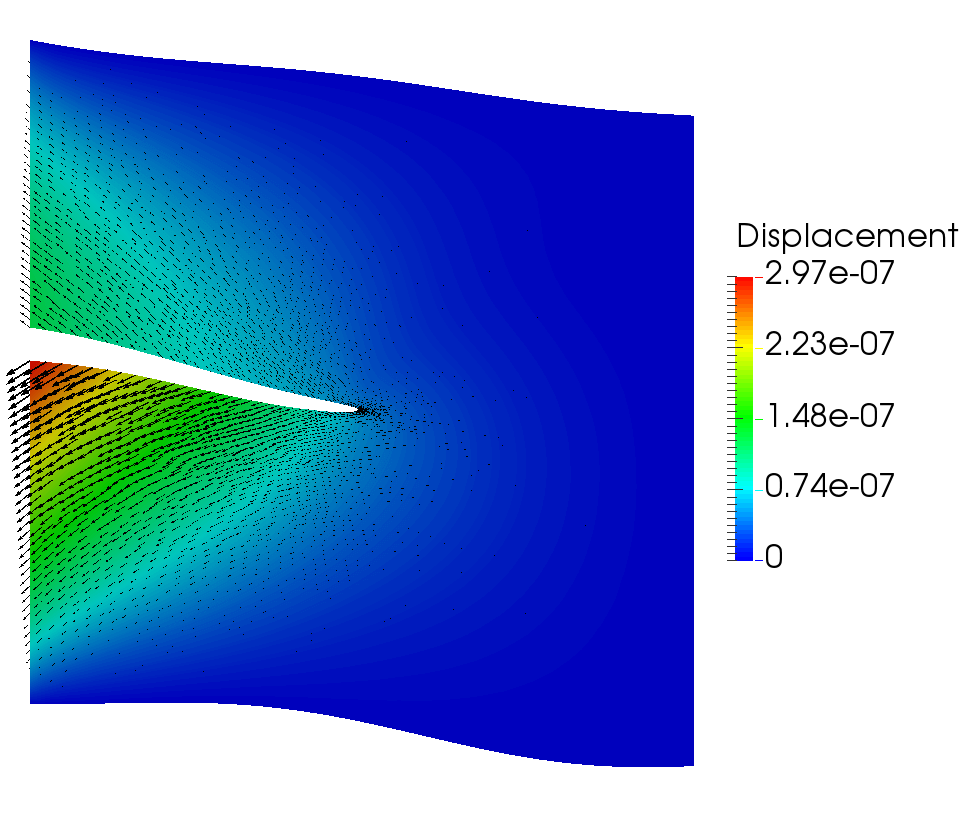}
		\label{fig:5_8}
	\end{subfigure}
	\caption{Sensitivity analysis simulations, $t=300$ s. Cases A
          to D are shown from top to bottom. The left figures show the
          Darcy velocity (m/s) superimposed with contour plot for the
          pressure (KPa). The right figures show the structure
          displacement field (m) over the displacement magnitude
          contour plot. The grayscale velocity legend shows the range
          of velocity magnitude.}
\label{fig:5}
\end{figure}

\begin{description}
	\item[Case A:] The pressure gradient is small as seen from the
          contour plot, this is due to the large permeability.  Also,
          from continuity of flux across the interface, one
          would expect to see that the magnitude of the Darcy velocity
          is close to the magnitude of the Stokes velocity, which we
          indeed observe in all the simulations.
	\item[Case B:] The permeability now is 4 orders of magnitude
          smaller, resulting in a larger pressure gradient, which is consistent
          with Darcy's law \eqref{eq:biot1}. Also, more
          flow is going toward the tip of the fracture, since its
          walls are now much less permeable. The displacement
          magnitude is also larger, while keeping the same profile.
	\item[Case C:] This case shows how the model reacts to
          decrease in mass storativity - which is by exhibiting larger
          pressure gradient and displacement magnitude while keeping
          the overall behavior as in case B.
	\item[Case D:] The last case is to show the effect of a
          significant change in Young's modulus. Increasing it by 7
          orders of magnitude, which makes the material much stiffer,
          results in the displacement being decreased by 7 orders of
          magnitude as expected.
\end{description}

The above results show that the displacement magnitude directly
increases with the magnitude of the pressure, while the profile of the
displacement field stays the same. This is consistent with the
dependence of the poroelastic stress on the fluid pressure, see
\eqref{stress-defn}.  In addition, the displacement magnitude is
inversely proportional to the Young’s modulus, which is consistent
with the constitutive law for the elastic stress in
\eqref{stress-defn}.

\section{Conclusions}
We have studied the interaction of a free fluid with a fluid within a
poroelastic medium. After stating the governing equations and
discussing the appropriate boundary and interface conditions we
considered a numerical discretization of the problem using a mixed
finite element method. A Lagrange multiplier is used to impose weakly
the continuity of mormal velocity interface condition, which is of
essesntial type in the mixed Darcy formulation. We show that the
method is stable and convergent of optimal order, even in the case of
non-matching grids across the interface. Computational experiments
illustrate that this method is an effective approach for simulating
fluid-poroelastic structure interaction with a wide range of physical
parameters, including heterogeneous media. The Lagrange multiplier
formulation is suitable for parallel non-overlapping domain
decomposition algorithms and multiscale approximations via coarse
mortar spaces. These topics will be explored in future research.

\section{Appendix: fully discrete analysis}
In this section we provide a detailed analysis of the stability and 
convergence of the fully discrete method 
\eqref{disc-h-weak-1}-\eqref{disc-h-b-gamma}. We will utilize the following 
discrete Gronwall inequality \cite{Quarteroni-Valli}.
\begin{lemma}[Discrete Gronwall lemma] \label{disc-Gronwall}
	Let $\tau > 0$, $B \ge 0$, and let $a_n,b_n,c_n,d_n$, $n \geq 0$, be non-negative sequences 
such that $a_0 \leq B$ and
	\begin{align*}
	a_n + \tau \sum_{l=1}^n b_l \leq \tau \sum_{l=1}^{n-1} d_la_l  + 
         \tau \sum_{l=1}^n c_l +B, \quad n\geq 1.
	\end{align*}
Then,
	\begin{align*}
	a_n + \tau \sum_{l=1}^n b_l \leq \exp(\tau \sum_{l=1}^{n-1}d_l)
       \left( \tau \sum_{l=1}^n c_l + B \right) , \quad n \geq 1.
	\end{align*}
	\\
\end{lemma}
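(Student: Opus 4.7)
The plan is to prove this by induction on $n$, after a small reduction. First I would set $A_n := a_n + \tau\sum_{l=1}^n b_l$ and $F_n := \tau\sum_{l=1}^n c_l + B$. Since $b_l \geq 0$ we have $a_l \leq A_l$, so plugging this into the right-hand side of the hypothesis only strengthens the inequality and yields
$$
A_n \leq \tau \sum_{l=1}^{n-1} d_l \, A_l + F_n, \qquad n \geq 1,
$$
with $A_0 = a_0 \leq B \leq F_0$. The conclusion is equivalent to $A_n \leq F_n \exp\bigl(\tau \sum_{l=1}^{n-1} d_l\bigr)$, which now has a clean recursive form.

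For the base case $n=1$, the empty sum $\sum_{l=1}^{0}$ makes the reduced hypothesis collapse to $A_1 \leq F_1 = F_1 \cdot \exp(0)$, so the claim is immediate. For the inductive step, assume the bound holds for $1,\dots,n-1$. Observe that $F_l$ is non-decreasing in $l$ (all summands are non-negative), so $F_l \leq F_n$ for $l \leq n$. Substituting the inductive bounds into the reduced recursion gives
$$
A_n \leq F_n \left(1 + \tau \sum_{l=1}^{n-1} d_l \exp\Bigl(\tau \sum_{k=1}^{l-1} d_k\Bigr)\right).
$$

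The remaining step is the standard telescoping trick. The elementary inequality $1 + x \leq e^x$ for $x \geq 0$, applied with $x = \tau d_l$, gives $\tau d_l \leq \exp(\tau d_l) - 1$. Multiplying by $\exp\bigl(\tau \sum_{k=1}^{l-1} d_k\bigr)$ produces
$$
\tau d_l \exp\Bigl(\tau \sum_{k=1}^{l-1} d_k\Bigr) \leq \exp\Bigl(\tau \sum_{k=1}^{l} d_k\Bigr) - \exp\Bigl(\tau \sum_{k=1}^{l-1} d_k\Bigr),
$$
so that the sum telescopes to $\exp\bigl(\tau \sum_{l=1}^{n-1} d_l\bigr) - 1$. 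Adding back the $1$ yields exactly $A_n \leq F_n \exp\bigl(\tau \sum_{l=1}^{n-1} d_l\bigr)$, completing the induction.

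I do not anticipate a genuine obstacle: the argument is classical and the only things to be careful about are the index bookkeeping (the outer sum runs to $n-1$, not $n$, which matches the empty-sum convention used in the base case) and the legitimacy of the reduction from $a_l$ to $A_l$, which relies only on $b_l \geq 0$. As a sanity check, when all $d_l = 0$ the conclusion degenerates to $A_n \leq F_n$, exactly recovering the reduced hypothesis, which confirms the bound is sharp in that limit.
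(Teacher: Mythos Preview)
Your argument is correct: the reduction from $a_l$ to $A_l$ via $b_l\ge 0$, the monotonicity $F_l\le F_n$, and the telescoping of $\tau d_l \le e^{\tau d_l}-1$ all go through cleanly, and the index bookkeeping (sum to $n-1$, empty at $n=1$) is handled properly. The paper itself does not prove this lemma at all; it simply states it and cites Quarteroni--Valli, so there is no in-paper proof to compare against --- your induction with telescoping is the standard textbook route and would be exactly what one finds in such a reference.
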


\begin{proof}[Proof of Theorem \ref{stability-fully-discrete}] 
	We choose 
	\[(\v_{f,h},w_{f,h},\v_{p,h},w_{p,h},\bxi_{p,h},\mu_h) = (\u^{n}_{f,h},p^{n}_{f,h},\u^{n}_{p,h},p^{n}_{p,h},d_{\tau} \bbeta^{n}_{p,h},\lam_h)\] 
	in \eqref{disc-h-weak-1}--\eqref{disc-h-b-gamma} and use the discrete analog 
of \eqref{int-parts}:
	\begin{align}
	\int_{S}u^n d_{\tau}\phi^n = \frac12 d_{\tau}\|\phi^n\|^2_{L^2(S)} 
        + \frac12 \tau \|d_{\tau}\phi^n\|^2_{L^2(S)} \label{sum-parts}
	\end{align}
	to obtain the energy equality
	\begin{align}
	& \frac12 d_{\tau}\left(s_0 \|p^n_{p,h}\|^2_{L^2(\O_p)} 
        + a^e_{p}(\bbeta^n_{p,h},\bbeta^n_{p,h})\right)
	+ \frac{\tau}{2} \left( s_0 \|d_{\tau}p^n_{p,h}\|^2_{L^2(\O_p)} 
	+a^e_{p}(d_{\tau}\bbeta^n_{p,h},d_{\tau}\bbeta^n_{p,h}) \right)  \nonumber \\
& \qquad
	+ a_{f}(\u^n_{f,h},\u^n_{f,h}) + a^d_{p}(\u^n_{p,h},\u^n_{p,h}) 
        +  |\u^n_{f,h} - d_{\tau}\bbeta^n_{p,h} |^2_{a_{BJS}} 
	= \mathcal{F} (t_n). \label{disc-energy-eq}
	\end{align}
	The right-hand side can be bounded as follows, using inequalities \eqref{CS} and \eqref{Young},
	\begin{align}
	\mathcal{F}(t_n) &= (\f_{f}(t_n),\u^n_{f,h}) + (\f_{p}(t_n),\dt \bbeta^n_{p,h}) 
         +(q_{f}(t_n),p^n_{f,h}) + (q_{p}(t_n),p^n_{p,h}) \nonumber \\
	&\leq (\f_{p}(t_n),\dt \bbeta^n_{p,h}) 
        + \frac{\epsilon_1}{2} \left(\|\u^n_{f,h}\|^2_{L^2(\O_f)} + \|p^n_{f,h}\|^2_{L^2(\O_f)} 
        + \|p^n_{p,h}\|^2_{L^2(\O_p)}\right) \nonumber \\
	&\quad + C \frac{1}{2\epsilon_1}\left(\|\f_{f}(t_n)\|^2_{L^2(\O_f)} + \|q_{f}(t_n)\|^2_{L^2(\O_f)} + \|q_{p}(t_n)\|^2_{L^2(\O_p)} \right). \label{disc-rhs-bound}
	\end{align}
	Combining \eqref{disc-energy-eq} and \eqref{disc-rhs-bound}, summing up over the 
        time index $n=1,...,N$, multiplying by $\tau$ and using the coercivity of the 
        bilinear forms \eqref{C1}--\eqref{C3}, we obtain
	\begin{align}
	&s_0 \|p^N_{p,h}\|^2_{L^2(\O_p)} + \|\bbeta^N_{p,h}\|^2_{H^1(\O_p)}
	+ \tau\sum_{n=1}^{N}\left(\|\u^n_{f,h}\|^2_{H^1(\O_{f})}  
        + \|\u^n_{p,h}\|^2_{L^2(\O_{p})}+|\u^n_{f,h} - \dt \bbeta^n_{p,h}|^2_{a_{BJS}} \right)
	\nonumber \\
	&\quad + \tau^2 \sum_{n=1}^{N} 
        \left(  s_0\|\dt p^n_{p,h}\|^2_{L^2(\O_p)} + \|\dt \bbeta^n_{p,h}\|^2_{H^1(\O_p)}\right)
	\nonumber \\
	&\leq C\Bigg(s_0 \|p^0_{p,h}\|^2_{L^2(\O_p)}  + \|\bbeta^0_{p,h}\|^2_{H^1(\O_p)}  
         + \epsilon_1 \tau\sum_{n=1}^{N} \left( \|\u^n_{f,h}\|^2_{L^2(\O_f)} 
         + \|p^n_{f,h}\|^2_{L^2(\O_f)} +\|p^n_{p,h}\|^2_{L^2(\O_p)}\right)  
        \nonumber \\
	&\quad 
         + \epsilon_1^{-1}\tau \sum_{n=1}^{N}\left( \|\f_{f}(t_n)\|^2_{L^2(\O_f)} 
         + \|q_{f}(t_n)\|^2_{L^2(\O_f)} + \|q_{p}(t_n)\|^2_{L^2(\O_p)} \right)
         + \tau\sum_{n=1}^{N}(\f_{p}(t_n),\dt \bbeta^n_{p,h}) \Bigg).
        \label{disc-energy-ineq1}
	\end{align}
To bound the last term on the right we use summation by parts:
	\begin{align}
	&\tau\sum_{n=1}^{N}(\f_{p}(t_n),\dt \bbeta^n_{p,h}) 
         = (\f_p(t_N),\bbeta_{p,h}^N)- (\f_p(0),\bbeta_{p,h}^0) 
         - \tau\sum_{n=1}^{N-1}(\dt \f_{p}^n,\bbeta^n_{p,h}) \nonumber \\
	& \qquad\leq \frac{\epsilon_1}{2} \| \bbeta_{p,h}^N\|^2_{L^2(\O_p)} 
        +  \frac{1}{2\epsilon_1}\|\f_p(t_N)\|^2_{L^2(\Omega_p)} 
        + \frac{\tau}{2}\sum_{n=1}^{N-1}\|\bbeta^n_{p,h}\|^2_{L^2(\O_p)}
\nonumber \\
& \qquad\qquad        
        + \frac12 \left( \| \bbeta_{p,h}^0\|^2_{L^2(\O_p)} + \| \f_{p}(0)\|^2_{L^2(\O_p)} 
        +  \tau\sum_{n=1}^{N-1}\|\dt \f_{p}^n\|^2_{L^2(\O_p)}\right). \label{sum-by-parts} 
	\end{align}
Next using the inf-sup condition \eqref{inf-sup} for $(p^n_{f,h},p^n_{p,h}, \lam^n_h)$ we obtain, 
in a similar way to \eqref{p-bound},
\begin{align}
    & \epsilon_2\tau\sum_{n=1}^N \Big(\|p^n_{f,h}\|^2_{L^2(\O_f)} + \|p^n_{p,h}\|^2_{L^2(\O_p)} 
      + \|\lam^n_h\|^2_{ \Lambda_h} \Big) \nonumber \\
    &\quad\quad\leq C \epsilon_2\tau\sum_{n=1}^N 
    \Big( \|\f_{f}(t_n)\|^2_{L^2(\O_f)} + \|\f_{p}(t_n)\|^2_{L^2(\O_p)} 
    + \|\u^n_{f,h}\|^2_{H^1(\O_f)} + \|\u^n_{p,h}\|^2_{L^2(\O_p)} \nonumber \\
    &\quad\quad\quad+ \|\bbeta^n_{p,h}\|^2_{H^1(\O_p)} 
    + |\u^n_{f,h}-\dt \bbeta^n_{p,h}|^2_{a_{BJS}}\Big). \label{disc-inf-sup}
\end{align}
Combining \eqref{disc-energy-ineq1}--\eqref{disc-inf-sup}, and taking $\epsilon_2$ 
small enough, and then $\epsilon_1$ small enough, and using Lemma \ref{disc-Gronwall} 
with $a_n = \|\bbeta^n_{p,h}\|^2_{H^1(\O_p)}$, gives
\begin{align}
	&s_0 \|p^N_{p,h}\|^2_{L^2(\O_p)} + \|\bbeta^N_{p,h}\|^2_{H^1(\O_p)}+\tau \sum_{n=1}^{N} \left[\|\u^n_{f,h}\|^2_{H^1(\O_{f})} +\|\u^n_{p,h}\|^2_{L^2(\O_{p})}+|\u^n_{f,h} - \dt \bbeta^n_{p,h}|^2_{a_{BJS}} \right]\nonumber \\
	&\quad+ \tau^2\sum_{n=1}^{N} \left[ s_0\|\dt p^n_{p,h}\|^2_{L^2(\O_p)} + \|\dt \bbeta^n_{p,h}\|^2_{H^1(\O_p)}  \right] + \tau \sum_{n=1}^{N} \left[\|p^n_{p,h}\|^2_{L^2(\O_p)} +\|p^n_{f,h}\|^2_{L^2(\O_f)}+\|\lam^n_{h}\|^2_{\Lambda_h} \right] \nonumber\\
	&\leq C\exp(T)\Big(s_0 \|p^0_{p,h}\|^2_{L^2(\O_p)}  + \|\bbeta^0_{p,h}\|^2_{H^1(\O_p)}+ \| \f_{p}(0)\|^2_{L^2(\O_p)} \nonumber\\
	&\quad+\tau \sum_{n=1}^{N} \Big[ \|\f_{f}(t_n)\|^2_{L^2(\O_f)}+\|\f_{p}(t_n)\|^2_{L^2(\O_p)} + \|q_{f}(t_n)\|^2_{L^2(\O_f)} + \|q_{p}(t_n)\|^2_{L^2(\O_p)}+\|\dt \f_{p}\|^2_{L^2(\O_p)}\Big] \Big), \nonumber 
\end{align}
which implies the statement of the theorem using the appropriate space-time norms.
\end{proof}

For the sake of space, we do not present the proof of Theorem~\ref{error-fully-discrete}.
The error equations are obtained by subtracting the first two equations of the 
fully discrete formulation \eqref{disc-h-weak-1}--\eqref{disc-h-weak-2} from the their 
continuous counterparts \eqref{h-cts-1}--\eqref{h-cts-2}:
\begin{align}
	&  a_{f}(\e^n_f,\v_{f,h})
	+ a^d_{p}(\e^n_p,\v_{p,h})
	+ a^e_{p}(\e^n_s,\bxi_{p,h}) 
	+a_{BJS}(\e^n_f,\dt\e^n_{s};\v_{f,h},\bxi_{p,h})+ b_f(\v_{f,h},e^n_{fp}) + b_p(\v_{p,h},e^n_{pp})\nonumber \\ 
	&\quad 
	+\alpha b_p(\bxi_{p,h},e^n_{pp})  
	+ b_{\Gamma}(\v_{f,h},\v_{p,h},\bxi_{p,h};e^n_\lam) +\left(s_0\,\dt e^n_{pp},w_{p,h}\right) - \alpha b_p(\dt e^n_{s},w_{p,h})
	- b_p(\e^n_{p},w_{p,h})  \nonumber \\
	&\quad
	- b_f(\e^n_{f},w_{f,h})
	= (s_0 r_n(p_p),w_{p,h})+a_{BJS}(0,r_n(\bbeta_{p});\v_{f,h},\bxi_{p,h})   - \alpha b_p(r_n(\bbeta_p),w_{p,h}),
\end{align}
where $r_n$ denotes the difference between the time derivative and its discrete analog:
\begin{align*}
r_n(\theta) &= \d_t \theta(t_n) -\dt \theta^n.
\end{align*}
It is easy to see that \cite[Lemma 4]{BYZ} for sufficiently smooth $\theta$, 
$$
\tau \sum_{n=1}^N\|r_n(\theta)\|^2_{H^k(S)} \leq C\tau^2\|\d_{tt}\theta\|^2_{L^2(0,T;H^k(S))}.
$$
The proof of Theorem~\ref{error-fully-discrete} follows the structure of 
the proof of Theorem~\ref{thm:error-semi-discrete}, using discrete-in-time arguments 
as in the proof of Theorem~\ref{stability-fully-discrete}.

\bibliographystyle{plain}
\bibliography{fracture}

\end{document}